\definecolor{ForestGreen}{RGB}{34,139,34}
\numberwithin{equation}{section}
\numberwithin{equation}{section}
\newtheorem{theorem}{Theorem}[section]
\newtheorem{lemma}[theorem]{Lemma}
\newtheorem{corollary}[theorem]{Corollary}
\newtheorem{proposition}[theorem]{Proposition}
\newtheorem{assumption}[theorem]{Assumption}
\newtheorem{definition}[theorem]{Definition}
\def\N{{\mathbb N}}
\def\Z{{\mathbb Z}}
\def\R{{\mathbb R}}
\def\C{{\mathbb C}}
\def\1{{\mathds{1}}}
\def\cD{\mathcal{D}}
\def\cH{{\mathcal H}}
\def\cB{{\mathcal B}}
\def\cL{{\mathcal L}}
\newcommand{\Supp}{{\mathrm{Supp}\,}}
\newtheorem{remark}[theorem]{Remark}
\title[Mixed regularity and sparse grid of electronic system]{Mixed regularity and sparse grid approximations of $N$-body Schr\"odinger evolution equation}
\author{Long Meng}
\address{Long Meng:
Center for Interdisciplinary Applied Mathematics \& Institute of Fundamental and Transdiciplinary Research, Zhejiang University, China}
\email{longmeng@zju.edu.cn}
\author{Dexuan Zhou}
\address{Dexuan Zhou:
School of Mathematical Sciences, Beijing Normal University, Beijing 100875, China}
\email{zhoudexuan@mail.bnu.edu.cn}
  \date{}
\begin{document}

\subjclass{35J10, 35B65, 41A25, 41A63 }

\maketitle

\begin{abstract}
In this paper, we present a mathematical analysis of time-dependent $N$-body electronic systems and establish mixed regularity for the corresponding wavefunctions. Based on this, we develop sparse grid approximations to reduce computational complexity, including a sparse grid Gaussian-type orbital (GTO) scheme. We validate the approach on the Helium atom (${\rm He}$) and Hydrogen molecule (${\rm H}_2$), showing that sparse grid GTOs offer an efficient alternative to full grid discretizations.
\end{abstract}

\section{Introduction}
This paper is devoted to the mathematical study of sparse grid approximations for the following time-dependent $N$-body electronic system in molecular dynamics:
\begin{equation}\label{eq:1.1}
    \begin{cases}
    i\partial_t u(t,x)= H_N(t) u(t,x), & t\in [0,T]=:I_T,\; x=(x_1,\cdots,x_N)\in (\R^3)^N,\\
    u(0,x)=u_0(x),
    \end{cases}
\end{equation}
with
\begin{align}\label{hamiltonian}
    H_N(t)=\sum_{j=1}^N -\frac{1}{2}\Delta_j+\sum_{j=1}^N V(t,x_j)+\sum_{1\leq j<k\leq N} W(x_j,x_k),
\end{align}
where
\begin{align*}
    V(t,x_j)=-\sum_{\mu=1}^M \frac{Z_\mu}{|x_j-a_\mu(t)|}, 
    \qquad 
    W(x_j,x_k)=\frac{1}{|x_j-x_k|}.
\end{align*}
Here $\Delta_j:=\Delta_{x_j}$ denotes the Laplacian operator acting on the variable $x_j$.

\medskip
In physics and quantum chemistry, Eq.~\eqref{eq:1.1} describes the quantum mechanical $N$-body problem, where $N\in \mathbb{N}^+$ electrons interact with $M\in \mathbb{N}^+$ nuclei of total charge $Z=\sum_{\mu=1}^M Z_\mu$ through Coulomb attraction and repulsion. The Hamiltonian \eqref{hamiltonian} acts on wavefunctions with variables $x_1,\ldots,x_N\in \R^3$, representing the coordinates of $N$ electrons. Each moving nucleus $\mu$ with charge $Z_\mu\in \N^+$ is treated as a classical particle located at $a_\mu(t)\in \R^3$ at time $t$. This model serves as the basis for studying dynamical phenomena such as chemical reactions. An analogous setting for the time-dependent Hartree--Fock model coupled with classical nuclear dynamics can be found in \cite{Cances}.

In mathematics, the evolution equation \eqref{eq:1.1} has been well studied. In the time-independent case $H_N(t)\equiv H_{N}(0)$, the Stone theorem ensures the existence and uniqueness of solutions. When $H_N(t)$ depends explicitly on time, the problem becomes much more delicate: by using the Duhamel formula and Strichartz estimates, local-in-time existence and uniqueness of solutions in $C^0(I_T,H^2((\R^3)^N))$ has been proved in \cite{Yajima1} for the one-body problem and in \cite{Yajima2} for the general $N$-body problem (with $T\lesssim (ZN+N^2)^{-2-}$).

In numerical analysis, this electronic configuration space $(\R^3)^N$ is typically high-dimensional for large $N\in \N^+$. Conventional discretizations of partial differential equations by finite differences or finite element methods scale exponentially with respect to the dimension of the configuration space. This is known as the \textit{curse of dimensionality}. 

To overcome this problem, various reduced models have been introduced. Time-dependent (multiconfiguration) Hartree--Fock and time-dependent density functional theory provide effective approximations (see, e.g., \cite{Catto,Chadam,quantumtoclassical-lubich}), however they are not based on a direct discretization of \eqref{eq:1.1}. Other approaches, such as the time-dependent density matrix renormalization group (TD-DMRG) and the time-dependent variational Monte Carlo (TD-VMC) method, have been developed in physics to study $N$-body dynamics (see, e.g., \cite{nys2024ab,baiardi2021electron} and references therein), but face challenges for Coulomb systems or lack rigorous mathematical understanding.  
The Gaussian wave-packet method is also widely used in quantum dynamics (see, e.g., \cite{quantumtoclassical-lubich,quantumdynamics-LasserLubich,moleculardynamics-Hans}), but typically treats \eqref{eq:1.1} within a semiclassical regime.

In this paper, we study the $N$-body problem \eqref{eq:1.1} via sparse grid methods to improve computational efficiency. To this end, we first analyze the mixed regularity of \eqref{eq:1.1}, which then enables the construction of sparse-grid-type approximations. 

\medskip

\subsection{Mixed regularity and sparse grid approximation for eigenvalue problem}

For the eigenvalue problem
\begin{align}\label{eigen}
    H_N(0)u_*=\lambda u_*\qquad \textrm{with}\qquad \lambda<0,
\end{align}
the computational complexity can be reduced by employing sparse grid methods (see, e.g., \cite{Griebel} for numerical implementations and \cite{Yserentant1,Yserentant2,Yserentant3,meng,Yserentant4} for the mathematical foundations), thanks to the fact that the antisymmetry of the wavefunction $u_*$ can improve its regularity. According to Pauli's principle, the antisymmetry of $u_*$ is an inherent physical property of fermions. In this paper, we follow the definition of antisymmetry given in \cite{meng}. 

\begin{definition}[Generalized antisymmetric function]\label{def:anti}
Let $I\subset\{1,\cdots,N\}$. When $|I|>1$, a wavefunction $u$ is \emph{antisymmetric with respect to $I$} if and only if, for any $j,k\in I$, 
\[
u(P_{j,k}x)=-u(x),\qquad P_{j,k} (\cdots,x_j,\cdots, x_k,\cdots):=(\cdots,x_{k},\cdots, x_j,\cdots).
\]
In particular, when $|I|= 1$ or $I=\emptyset$, every wavefunction $u$ is antisymmetric with respect to $I$.
\end{definition}

According to Pauli's principle, the electronic wavefunction (with spin $\{\pm \frac{1}{2}\}$) satisfies the following property.
\begin{proposition}[Antisymmetry of the electronic wavefunction]\label{prop:anti}
For any electronic wavefunction $u$ and for any fixed spin state, there exist two disjoint index sets satisfying $I_1\bigcap I_2=\emptyset$ and $I_1\bigcup I_2=\{1,\cdots,N\}$ such that $u$ is antisymmetric w.r.t. $I_1$ and $I_2$. 
\end{proposition}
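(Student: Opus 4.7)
The plan is to derive the statement directly from Pauli's principle together with the specific structure of spin-$\tfrac{1}{2}$ fermions. Recall that a complete electronic wavefunction is a function $\Psi(x_1,\sigma_1,\ldots,x_N,\sigma_N)$ of positions $x_j\in\R^3$ and spin variables $\sigma_j\in\{+\tfrac{1}{2},-\tfrac{1}{2}\}$ that is antisymmetric under any simultaneous exchange $(x_j,\sigma_j)\leftrightarrow(x_k,\sigma_k)$. The spatial wavefunction $u$ entering \eqref{eq:1.1} is obtained by fixing a spin configuration $(\sigma_1,\ldots,\sigma_N)\in\{\pm\tfrac{1}{2}\}^N$ and setting $u(x):=\Psi(x_1,\sigma_1,\ldots,x_N,\sigma_N)$.

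First I would introduce the natural partition induced by this fixed spin state: let $I_1:=\{j\in\{1,\ldots,N\}:\sigma_j=+\tfrac{1}{2}\}$ and $I_2:=\{j\in\{1,\ldots,N\}:\sigma_j=-\tfrac{1}{2}\}$. By construction $I_1\cap I_2=\emptyset$ and $I_1\cup I_2=\{1,\ldots,N\}$, so the required combinatorial conditions are satisfied. The cases $|I_r|\le 1$ are vacuous by Definition~\ref{def:anti}, so I may assume $|I_r|\ge 2$ without loss of generality.

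Next, for any indices $j,k\in I_r$ with $r\in\{1,2\}$ the spins coincide, $\sigma_j=\sigma_k$, so the induced permutation of the spin labels is trivial. Invoking the full antisymmetry of $\Psi$ then yields
\[
u(P_{j,k}x)=\Psi(P_{j,k}x,\sigma_1,\ldots,\sigma_N)=-\Psi(x_1,\sigma_1,\ldots,x_N,\sigma_N)=-u(x),
\]
which is exactly the antisymmetry of $u$ with respect to $I_r$ in the sense of Definition~\ref{def:anti}.

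The statement is essentially a bookkeeping consequence of the definitions, so I do not expect a serious obstacle; the only point that deserves care is the distinction between the full spin-space wavefunction $\Psi$, which is antisymmetric under \emph{all} transpositions, and its spatial slice $u$ at a fixed spin state, which only inherits antisymmetry within each same-spin block. A cross-exchange between an index of $I_1$ and an index of $I_2$ alters the spin configuration, so it does not produce a sign flip on $u$ but rather relates $u$ to a different spatial wavefunction corresponding to a permuted spin state; this is precisely why the partition into \emph{two} index sets, rather than a single one, appears in the statement.
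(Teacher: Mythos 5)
Your argument is correct: the paper itself gives no proof of Proposition~\ref{prop:anti} and simply defers to \cite[Section~1]{Yserentant1}, and your derivation—partitioning the indices by the fixed spin values and observing that a transposition within a same-spin block coincides with a full position-plus-spin exchange, hence inherits the sign flip from the antisymmetry of the complete wavefunction $\Psi$—is exactly the standard argument given in that reference. Your closing remark about why cross-exchanges between $I_1$ and $I_2$ do not yield a sign flip is also the right observation and explains why two index sets are needed.
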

Further details on Proposition~\ref{prop:anti} can be found in \cite[Section~1]{Yserentant1}. In this paper we make the following assumption.
\begin{assumption}\label{ass:anti}
The initial datum $u_0$ is antisymmetric w.r.t. the index sets $I_1$ and $I_2$, where $I_1$ and $I_2$ are as given in Proposition~\ref{prop:anti}.
\end{assumption}
Let $I\subset\{1,\ldots,N\}$. The mixed regularity is characterized by the following fractional Laplacian operator:
\begin{align}
    \cL_{I}=\prod_{j\in I}(1-\Delta_j)^{1/2},
\end{align}
which is defined via the Fourier transform (see Section~\ref{sec:notation}). In the special case $I=\emptyset$, we set $\cL_I=\mathbbm{1}$. 
Relying on the antisymmetry of the wavefunction $u_*$, mixed regularity associated with operator $\cL_I$ are studied in \cite{Yserentant1,Yserentant2,Yserentant3}. A typical result can be stated as follows: under Assumption~\ref{ass:anti},
\begin{align*}
    \mathcal{L}_{I_\ell} u_*\in H^1\big((\R^3)^N\big),\qquad \ell=1,2.
\end{align*}
This type of regularity naturally leads to hyperbolic cross space approximations of eigenfunctions. For $R>0$, define
\begin{align}\label{eq:hyper}
    (\mathcal{P}_{R} u)(x):=\int_{(\R^3)^N}\mathbbm{1}_{\mathcal{D}(R)}(\xi)\,
    \mathcal{F}_{x_1,\ldots,x_N}{u}(\xi)\,e^{2\pi i\xi\cdot x}\,d\xi,
\end{align}
where $\mathcal{F}_{x_1,\ldots,x_N}{u}$ denotes the $N$-body Fourier transform given in \eqref{eq:fourier-3N}, and $\mathbbm{1}_{\mathcal{D}(R)}$ is the characteristic function of the hyperbolic cross domain $\mathcal{D}(R)$,
\begin{equation}\label{eq:H-dom}
    \mathcal{D}(R):=\Big\{(\xi_1,\ldots,\xi_N)\in (\R^3)^N \;\Big|\;
    \sum_{\ell=1, 2}\prod_{i\in I_\ell}\big(1+|\xi_i|^{2}\big)^{1/2}\leq R\Big\}.
\end{equation}
Then one obtains the approximation estimate
\begin{align}\label{eq:approx1}
    \|(1-\mathcal{P}_{R}) u_*\|_{L^2((\R^3)^N)}
    \leq R^{-1}\big\|\sum_{\ell=1, 2}\cL_{I_\ell}u_*\big\|_{H^1((\R^3)^N)}
    \leq R^{-1}\sum_{\ell=1,2}\|\cL_{I_\ell}u_*\|_{H^1((\R^3)^N)}.
\end{align}
In numerical analysis, such hyperbolic cross approximations provide the foundation for constructing sparse grid methods, which significantly reduce computational complexity and allow for the treatment of systems with more electrons than standard discretizations (see, e.g., \cite{Griebel}).

\subsection{Mixed regularity and sparse grid approximation of \texorpdfstring{\eqref{eq:1.1}}{}}

We now consider the mixed regularity and sparse grid approximation of \eqref{eq:1.1}. As in the stationary case, the mixed regularity is also characterized by the operator $\cL_{I}$. 
Our main result on mixed regularity (Theorem~\ref{th:mixregularity}) states that, under Assumption~\ref{ass:anti}, for $p=3^-$ and for $T\lesssim (ZN+N^2)^{-2-}$ (see Remark~\ref{eq:T}), the solution $u$ to \eqref{eq:1.1} satisfies
\begin{align}\label{eq:mixedregularity-L2}
    \sum_{\ell=1,2}\|\cL_{I_\ell} u\|_{L^\infty([0,T],L^2((\R^3)^N))}
    \;\leq\;
    \sum_{\ell=1,2}\|\cL_{I_\ell} u\|_{X_{p,T}}
    \;\lesssim_{p}\;
    \sum_{\ell=1,2}\|\cL_{I_\ell} u_0\|_{L^2((\R^3)^N)},
\end{align}
where $X_{p,T}$, defined in Section~\ref{sec:func}, is our functional space used for the evolution problem \eqref{eq:1.1}.

\begin{remark}[Regularity of the initial datum $u_0$]
Under Assumption~\ref{ass:anti}, the condition $\cL_{I_\ell}u_0 \in L^2((\R^3)^N)$, $\ell=1,2$, is natural. For models such as Hartree--Fock, the initial datum $u_0$ can be written as a Slater determinant,
$
u_0(x)=\Big(\bigwedge_{j\in I_1}\phi_j\Big)\otimes\Big(\bigwedge_{j\in I_2}\phi_j\Big),
$
with orbitals $\phi_j \in H^1(\R^3)$. In this case, $\mathcal{L}_{I_\ell}u_0 \in L^2((\mathbb{R}^3)^N)$ holds for $\ell=1,2$.
\end{remark}
With the mixed regularity result established above, we now turn to the sparse grid approximation of \eqref{eq:1.1} under Assumption~\ref{ass:anti}. As a preliminary step, we introduce an abstract framework by defining a truncation operator $\mathcal{P}_R$ on $L^2((\R^3)^N)$. 
\begin{assumption}\label{ass:1.8}
Let $R > 0$. We assume that $\mathcal{P}_R\in \cB(L^2((\R^3)^N))$ satisfies:
\begin{itemize}
    \item (Projector) $0 \leq \mathcal{P}_R^2 = \mathcal{P}_R \leq \mathbbm{1}_{L^2((\R^3)^N)}$,
    \item (Commutativity) $[\mathcal{P}_R,\Delta_j]=0$ for all $j=1,\ldots,N$,
    \item (Approximation) For any $u\in L^2((\R^3)^N)$,
    $
    \|(1-\mathcal{P}_R)u\|_{L^2((\R^3)^N)}
    \leq \frac{1}{R}\Big\|\sum_{\ell=1}^{2}\cL_{I_\ell}u\Big\|_{L^2((\R^3)^N)}.
    $
\end{itemize}
\end{assumption}
Using $\mathcal{P}_R$, we have the following approximation of \eqref{eq:1.1}
\begin{equation}\label{hyper-approx}
    \begin{cases}
        i\partial_t u_R = H_{N,R}(u_R), & t\in I_T,\\
        u_R(0,x) = \mathcal{P}_{R}(u_0)(x),
    \end{cases}
\end{equation}
where
\[
H_{N,R}(u) = \sum_{j=1}^N -\frac{1}{2}\Delta_j u
+ \sum_{j=1}^N \mathcal{P}_{R}(V(t,x_j)u)
+ \sum_{1\leq j<k\leq N} \mathcal{P}_{R}(W(x_j,x_k)u).
\]
Our main result on the sparse grid approximation (Theorem~\ref{th:justif}) states that, under Assumption~\ref{ass:anti}, a solution $u_R=\mathcal{P}_R u_R$ to~\eqref{hyper-approx} exists in $X_{p,\infty}$. Moreover, for $p=3-$ and $T\lesssim (ZN+N^2)^{-2-}$, the following error bound holds:
\begin{align}\label{eq:error-L2}
    \|u-u_R\|_{L^\infty([0,T],L^2((\R^3)^N))}
    \;\leq\; \|u-u_R\|_{X_{p,T}}
    \;\lesssim_p\; \frac{1}{R}\sum_{\ell=1,2}\|\cL_{I_\ell} u_0\|_{L^2((\R^3)^N)}.
\end{align}

In Lemma~\ref{lem:5.3}, we establish an estimate analogous to~\eqref{eq:approx1}, showing that the operator $\mathcal{P}_{R}$ defined in \eqref{eq:hyper} satisfies Assumption~\ref{ass:1.8}. However, this operator corresponds to the continuous frequency domain $\mathcal{D}(R)$ defined in \eqref{eq:H-dom}, which is not directly suitable for computations.  
For numerical purposes, a discrete approximation is required. To this end, we construct a discretized operator $\mathcal{P}_R$ that also satisfies Assumption~\ref{ass:1.8}, and we present a discretized version of \eqref{hyper-approx}. 
In addition, as Gaussian-type orbitals (GTOs) are often preferable in molecular computations (including molecule dynamics), we further propose a sparse grid GTO approximation, motivated by the mixed regularity property.

\subsubsection{Discretization of \eqref{hyper-approx}}
The projector $\mathcal{P}_R$ can be constructed in various ways (see, e.g., \cite{Yserentant-sparegrid,Griebel2,Griebel}). Here we present one example and illustrate how it leads to a discretization of \eqref{hyper-approx} together with the error bound \eqref{eq:error-L2}.

Let $L>0$ and define the function $\phi_{\mathbf{j}}^{(L)}$ by its Fourier transform
\[
\mathcal{F}_y(\phi_{\mathbf{j}}^{(L)})(\xi_y) 
= L^{3/2} e^{-2\pi i L \mathbf{j}\cdot \xi_y},
\quad \xi_y \in [0,L^{-1})^3,\;\; \mathbf{j}\in\mathbb{Z}^3,
\]
which form an orthonormal basis of $L^2([0,L^{-1})^3)$. We extend these functions to $L^2(\mathbb{R}^3)$ by setting
\[
\mathcal{F}_y(\psi_{\mathbf{j}}^{(L)})(\xi_y) 
= \mathcal{F}_y(\phi_{\mathbf{j}}^{(L)})(\xi_y)\,\mathbbm{1}_{[0,L^{-1})^3}(\xi_y).
\]
To obtain a complete system in $L^2(\mathbb{R}^3)$, we consider the shifted functions
\[
\psi_{\mathbf{j},\mathbf{l}}^{(L)}(y) := 
\psi_{\mathbf{j}}^{(L)}(y - L^{-1}\mathbf{l}), 
\quad \mathbf{l}\in \mathbb{Z}^3,
\]
The collection $\{\psi_{\mathbf{j},\mathbf{l}}^{(L)}\}_{\mathbf{j},\mathbf{l}\in\mathbb{Z}^3}$
thus forms an orthonormal basis of $L^2(\mathbb{R}^3)$. Accordingly, an $N$-electron orthonormal basis function is given by
\begin{align}\label{eq:meyer-3N}
  \psi^{(L)}_{\vec{\mathbf{j}},\vec{\mathbf{l}}}(x)
  :=\prod_{\ell =1}^N \psi^{(L)}_{\mathbf{j}_\ell, \mathbf{l}_\ell}(x_\ell),
  \qquad \vec{\mathbf{j}}=(\mathbf{j}_1,\ldots,\mathbf{j}_N),\;
  \vec{\mathbf{l}}=(\mathbf{l}_1,\ldots,\mathbf{l}_N)\in (\mathbb{Z}^3)^N.
\end{align}
Any wavefunction $u(x)\in L^2((\R^3)^N)$ can be written as 
\begin{align}\label{eq:u-decom}
u(x)=\sum_{\vec{\mathbf{j}}, \vec{\mathbf{l}}\in (\mathbb{Z}^3)^N}
  u^{(L)}_{\vec{\mathbf{j}},\vec{\mathbf{l}}}\,
  \psi^{(L)}_{\vec{\mathbf{j}},\vec{\mathbf{l}}}(x),
  \qquad
  u^{(L)}_{\vec{\mathbf{j}}, \vec{\mathbf{l}}}
  = \left<\psi^{(L)}_{\vec{\mathbf{j}},\vec{\mathbf{l}}},\, u\right>.
\end{align}
Based on this basis, we define the truncated projector
\begin{align}\label{eq:P_R}
  \mathcal{P}_R^{(L)} := \sum_{(\vec{\mathbf{j}},\vec{\mathbf{l}})\in \mathcal{D}_{L,R}}
  \left|\psi^{(L)}_{\vec{\mathbf{j}},\vec{\mathbf{l}}}\right>
  \left<\psi^{(L)}_{\vec{\mathbf{j}},\vec{\mathbf{l}}}\right|,
\end{align}
where $\mathcal{D}_{L,R}$ denotes a hyperbolic-cross-type index set used for sparse grid approximation, defined by
\begin{equation}\label{eq:omega_R}
   \mathcal{D}_{L,R} := \Big\{ (\vec{\mathbf{j}}, \vec{\mathbf{l}}) \in (\mathbb{Z}^3)^N \;\Big|\;
   \big(L^{-1}\vec{\mathbf{l}} + ([0,L^{-1})^3)^N\big) \cap \mathcal{D}(R) \neq \emptyset \Big\}.
\end{equation}
Later we will show in Proposition~\ref{prop:P_R}, $\mathcal{P}_R^{(L)}$ satisfies Assumption~\ref{ass:1.8}, and the corresponding solution can be written as
\begin{align}
    u_R^{(L)}(t,x)
    = \mathcal{P}_R^{(L)} u_R^{(L)}(t,x)
    = \sum_{(\vec{\mathbf{j}},\vec{\mathbf{l}})\in \mathcal{D}_{L,R}}
      u^{(L)}_{\vec{\mathbf{j}},\vec{\mathbf{l}}}(t)\,
      \psi^{(L)}_{\vec{\mathbf{j}},\vec{\mathbf{l}}}(x).
\end{align}
Here the coefficients $u^{(L)}_{\vec{\mathbf{j}},\vec{\mathbf{l}}}(t)$ satisfy the evolution system
\begin{align}\label{eq:wavelet-discret}
    \begin{cases}
        i\partial_t u^{(L)}_{\vec{\mathbf{j}},\vec{\mathbf{l}}}(t)
        = \sum_{(\vec{\mathbf{j}}',\vec{\mathbf{l}}')\in \mathcal{D}_{L,R}}
          \left\langle \psi^{(L)}_{\vec{\mathbf{j}},\vec{\mathbf{l}}},
          H_N(t)\psi^{(L)}_{\vec{\mathbf{j}}',\vec{\mathbf{l}}'} \right\rangle
          u^{(L)}_{\vec{\mathbf{j}}',\vec{\mathbf{l}}'}(t), \\[6pt]
        u^{(L)}_{\vec{\mathbf{j}},\vec{\mathbf{l}}}(0)
        = \left\langle \psi^{(L)}_{\vec{\mathbf{j}},\vec{\mathbf{l}}}, u_0 \right\rangle,
    \end{cases}
    \qquad (\vec{\mathbf{j}},\vec{\mathbf{l}})\in \mathcal{D}_{L,R}.
\end{align}
This provides a sparse grid discretization of \eqref{hyper-approx}, and the resulting solution $u_R^{(L)}$ satisfies the error bound~\eqref{eq:error-L2} (see Proposition~\ref{prop:P_R}).

\begin{remark}[Constraints on $\vec{\mathbf{j}}$]
In practical computations, additional constraints on $\vec{\mathbf{j}}$ are required to evaluate \eqref{eq:wavelet-discret} numerically. Since
$\|u_R(t)\|_{L^2((\R^3)^N)} = \|\mathcal{P}_R u_0\|_{L^2((\R^3)^N)},$
it follows that $\|\mathcal{P}_{R,J}^{(L)}u_{R}-u_{R}\|_{L^2((\R^3)^N)}\to 0$ as $J\to \infty$ for any $t\geq 0$, where 
\[
    \mathcal{P}_{R,J}^{(L)} := \sum_{(\vec{\mathbf{j}},\vec{\mathbf{l}})\in \mathcal{D}_{L,R,J}}
      \left|\psi^{(L)}_{\vec{\mathbf{j}},\vec{\mathbf{l}}}\right>
      \left<\psi^{(L)}_{\vec{\mathbf{j}},\vec{\mathbf{l}}}\right|,
      \qquad  
      \mathcal{D}_{L,R,J} := \big\{(\vec{\mathbf{j}},\vec{\mathbf{l}})\in \mathcal{D}_{L,R} : \|\vec{\mathbf{j}}\|_\infty \leq J \big\}.
\]
Therefore, in practice, the infinite index set $\vec{\mathbf{j}}\in (\mathbb{Z}^3)^N$ can be truncated by imposing $|\vec{\mathbf{j}}|_\infty \leq J$ for large $J$.

Refinements of such constraints on $\vec{\mathbf{j}}$ (or on $L$) depend on a more precise understanding of the decay properties of the solution $u$ to \eqref{eq:1.1}, which remains a natural numerical challenge on unbounded domains such as $\mathbb{R}^d$, $d\geq 1$ (see, e.g., \cite{Yserentant-sparegrid,Griebel}). For the eigenvalue problem \eqref{eigen}, such constraints have been derived in \cite[Section~7]{Griebel} by exploiting the exponential decay of eigenfunctions. For the time-dependent problem, favorable decay properties of wavefunctions are often assumed in quantum chemistry. A rigorous analysis of these properties and their numerical implications for the evolution problem will be addressed by the authors in future work.
\end{remark}

\subsubsection{Sparse grid Gaussian-type orbital approximation}\label{sec:1.2.3} 
Since linear combinations of Gaussian-type orbitals (GTOs) can efficiently approximate the singular behavior of electronic wavefunctions near nuclei while preserving the analytical tractability of two-electron integrals, GTOs have become the standard basis in quantum chemistry for eigenvalue problems and also for dynamics (see, e.g., \cite{wozniak2023exploring,li2020real,Abinitio,tuckerman_2002}). Motivated by this, we propose a sparse grid GTO approximation, which provides a promising framework for reducing computational complexity.

The GTO approximation can be regarded as a special case of partial-wave decomposition with given radial functions. In the following, we first recall the general partial-wave expansion without any restriction on the radial part, and then specialize to the Gaussian-type choice used in Gaussian orbital theory.

For any $v\in L^2(\R^3)$ and a fixed $a\in \R^3$, the partial-wave expansion of $v$ takes the form 
\begin{align}\label{eq:partialwave-a}
    v(y)= \sum_{l=0, 1, \cdots, \;}\sum_{m=-l}^l v_{l,m}(r_y)\,Y_{l,m}(\Omega_{y}), 
    \qquad r_y:=|y-a|\in \R^+,\quad 
    \Omega_{y}:=\frac{y-a}{|y-a|}\in \mathbb{S}^2,
\end{align}
where $\{Y_{l,m}\}_{l,m}$ denotes the spherical harmonics, which forms an orthonormal basis of $L^2(\mathbb{S}^2)$ and satisfies
\begin{align}\label{eq:eigen-sphere}
    \Delta_{\mathbb{S}^2} Y_{l,m}=-l(l+1)Y_{l,m},
\end{align}
with $\Delta_{\mathbb{S}^2}$ the Laplace operator on $\mathbb{S}^2$.  

For the $N$-body wavefunction $u\in L^2((\R^N)$, we apply this decomposition to each electron $x_j=(r_j,\Omega_j)\in \R^3$ for $j=1,\ldots,N$. Let ${\pmb l}=(l_1,\ldots,l_N)\in \mathbb{N}_0^N$, ${\pmb m}=(m_1,\ldots,m_N)\in \mathbb{Z}^N$, the angular part of $u$ takes the form
\[
\mathbb{Y}_{{\pmb l},{\pmb m}}(\Omega_1,\ldots,\Omega_N)
   := \bigotimes_{k=1}^N Y_{l_k,m_k}(\Omega_k).
\]
Accordingly, any $v\in L^2((\R^3)^N)$ admits the expansion
\begin{align}\label{eq:partial-wave}
    v(x_1,\ldots,x_N)=\sum_{({\pmb l},{\pmb m})\in \widetilde{\mathcal{D}}}
    v_{{\pmb l},{\pmb m}}(r_1,\ldots,r_N)\,
    \mathbb{Y}_{{\pmb l},{\pmb m}}(\Omega_1,\ldots,\Omega_N),
\end{align}
where 
\[
\widetilde{\mathcal{D}}
:=\Big\{({\pmb l},{\pmb m})\in \mathbb{N}_0^N\times \mathbb{Z}^N : 
l_j\geq 0,\; -l_j\leq m_j\leq l_j,\; j=1,\ldots,N\Big\}.
\]
Based on this decomposition, we introduce the truncated projector
\begin{align}\label{eq:P_R-GTOs}
    \widetilde{\mathcal{P}}_R
    :=\sum_{({\pmb l},{\pmb m})\in \widetilde{\mathcal{D}}_R}
    \big|\phi_{{\pmb l},{\pmb m}}\big\rangle
    \big\langle \phi_{{\pmb l},{\pmb m}}\big|,
\end{align}
where the sparse grid index set is given by
\begin{align*}
      \widetilde{\mathcal{D}}_R^{\mathrm{SG}}
      :=\Big\{({\pmb l},{\pmb m})\in \widetilde{\mathcal{D}} :
      \sum_{\ell=1, 2}\prod_{j\in I_\ell}(l_j+1/2)\leq R\Big\}.
\end{align*}
Proposition~\ref{prop:orbital} then shows that $\widetilde{\mathcal{P}}_R u$ provides a good approximation of the solution $u$ to \eqref{eq:1.1}, namely
\begin{align}\label{eq:GTOs}
   \sup_{t\in [0,T]} \|(1-\widetilde{\mathcal{P}}_R)u(t)\|_{\mathcal{V}_{I_1,I_2}}
   \;\lesssim\; R^{-1}\sum_{\ell=1,2} \|\mathcal{L}_{I_\ell}u_0\|_{H^1((\mathbb{R}^3)^N)},
\end{align}
where $\mathcal{V}_{I_1,I_2}$ is the functional space defined in \eqref{eq:V12}. We also point out that for the eigenvalue problem \eqref{eigen}, there exists a similar result \cite{Yserentant4} but in different functional spaces.

In the atomic-orbital framework, localized Gaussian-type orbitals (GTOs) centered at the nuclear positions 
\(\{a_\mu\}_{\mu=1}^{M}\) are of the form
\[
\psi_{\mu,n,l,m}(y) 
= f_{n,l,m}(t,|y-a_{\mu}(t)|)\,Y_{l,m}\!\big(\Omega_{y,a_\mu}\big),
\]
where 
\[
f_{n,l,m}(t,s):=\sum_{k} c^{(n)}_{l,m,k}(t)\, s^{l}
  e^{-\zeta^{(n)}_{l,m,k}\, s^2}, 
  \qquad \Omega_{y,a_\mu}:=\frac{y-a_\mu}{|y-a_\mu|}.
\]
Here the contraction coefficients satisfy $c^{(n)}_{l,m,k}\in\R$ and the exponents $\zeta^{(n)}_{l,m,k}>0$, while the index $n$ labels different contracted orbital sets (i.e., linear combinations of primitive Gaussians). 

In this representation, any $N$-body wavefunction $u$ can be expanded in terms of GTOs as
\begin{align}\label{eq:GTOs-N}
    u(t,x) 
    = \sum_{({\pmb l},{\pmb m})\in \widetilde{\mathcal{D}}}
      \sum_{\pmb n}\prod_{j=1}^N
      \left(\sum_{\mu=1}^M \psi_{\mu,n_j,l_j,m_j}(t,x_j)\right),
\end{align}
where $\pmb n=(n_1,\ldots,n_N)$ is a multi-index labeling the different radial functions corresponding to each $(l_j,m_j)$. 
In Gaussian orbital theory, however, $\pmb n$ ranges over a fixed finite set determined by the chosen basis (e.g., in the cc-pVDZ basis, Helium has two $s$-type orbitals with $n=1,2$ for $l=0$). 
Hence, in the subsequent sparse grid truncations, no further restriction is imposed on $\pmb n$, as the number of contracted radial functions is already finite and fixed by the chosen basis set in Gaussian orbital theory.

We now explain how the sparse grid \eqref{eq:P_R-GTOs} is extended to  GTO approximation. 
For the atomic case ($M=1$), by \eqref{eq:GTOs} the construction is straightforward: the wavefunction $u(t,x)$ can be approximated by
\begin{align}\label{eq:GTOs-N-1}
  u(t,x)\;\approx\; u^{\rm SG}(t,x):=\widetilde{\mathcal{P}}_R u(t,x)
  = \sum_{({\pmb l},{\pmb m})\in \widetilde{\mathcal{D}}_R^{\rm SG}}\sum_{\pmb n}
    \prod_{j=1}^N \psi_{1,n_j,l_j,m_j}^{(p)}(t,x_j).
\end{align}
For molecular systems ($M>1$), the situation is more involved, since the angular variables 
\(\Omega=(y-a_1(t))/|y-a_1(t)|\) and 
\(\Omega'=(y-a_2(t))/|y-a_2(t)|\) are centered at different nuclei. 
Therefore, \eqref{eq:GTOs} cannot be applied directly. However, as molecular orbitals in quantum chemistry are typically expressed as linear combinations of atomic orbitals, it is natural to extend the sparse grid approximation from the atomic to the molecular case by applying the same sparse grid truncation in the angular indices:  
\begin{align}\label{eq:GTOs-N-2}
     u(t,x)\;\approx\; u^{\rm SG}(t,x)
     := \sum_{({\pmb l},{\pmb m})\in \widetilde{\mathcal{D}}_R^{\rm SG}}\sum_{\pmb n}
     \prod_{j=1}^N\left(\sum_{\mu_j=1}^M \psi_{\mu_j,n_j,l_j,m_j}^{(p)}(t,x_j)\right).
\end{align}

As an illustration, we apply this sparse grid GTO approximation to the Helium atom (${\rm He}$) and the Hydrogen molecule (${\rm H}_2$), and demonstrate its efficiency; see Figure~\ref{fig:sg-vs-fg}.

{\bf Organisation of the paper.} 
This paper is organised as follows. 
In Section~\ref{sec:mainresult}, we set up the problem, introduce notation (Section~\ref{sec:notation}) and functional spaces (Section~\ref{sec:func}), and state our main results on existence, mixed regularity (Section~\ref{sec:existence}), and sparse grid approximations (Section~\ref{sec:wavelet} for wavelet-based sparse grid approximation and Section~\ref{sec:gaussian} for Gaussian-type orbital sparse grid approximation). A numerical application to the He atom and H$_2$ molecule is also presented (Section~\ref{sec:gaussian}). 
In Section~\ref{sec:3}, we introduce the Strichartz estimates, Hardy inequalities and Sobolev inequalities in our functional spaces. 
The proofs of existence and mixed regularity   (Theorems~\ref{th:existence}--\ref{th:mixregularity}) are given in Section~\ref{sec:4}, and the justification of the error bound (Theorem~\ref{th:justif}) is provided in Section~\ref{sec:5}.

\section{Set-up and main results}\label{sec:mainresult}
In this section, we first introduce the notation and functional spaces. Then we state the main existence and mixed regularity results. Building on this, we develop sparse grid approximations based on wavelets and the Gaussian-type orbital. The section ends with an application to the He atom and H$_2$ molecule.

\subsection{Notations}\label{sec:notation} 
To avoid ambiguity, we first clarify the notations used throughout the paper.

We write $a \lesssim b$ to mean that there exists a constant $C>0$, independent of $N$ and $Z$, such that $a \leq C b$. 
Similarly, $a \lesssim_p b$ means that there exists a constant $C(p)>0$, depending only on $p$, such that $a \leq C(p)b$. 
Whenever these symbols are used, the implicit constants $C$ (resp. $C(p)$) are always independent of $N$ and $Z$.

Next, we fix our convention for the Fourier transform. For $f\in L^2(\R^3)$ and $g\in L^2((\R^3)^N)$, we define
\begin{align}\label{eq:fourier-3}
    \mathcal{F}_{y}(f)(\xi_y)
    := \int_{\R^3} f(y)\, e^{-2\pi i \xi_y\cdot y}\, dy,
\end{align}
and
\begin{align}\label{eq:fourier-3N}
    \mathcal{F}_{x_1,\cdots,x_N}(g)(\xi)
    := \mathcal{F}_{x_N}\circ\cdots\circ \mathcal{F}_{x_1}(g)(\xi),
    \qquad \xi=(\xi_1,\cdots,\xi_N),\;\; \xi_k\in\mathbb{R}^3,\; k=1,\cdots,N.
\end{align}
The subscript $y$ or $x_j$ indicates the variable on which the Fourier transform acts. 

For any $I\subset\{1,\cdots,N\}$, we define the operator
\begin{align}
    \cL_{I} := \prod_{j\in I}(1-\Delta_j)^{1/2}
\end{align}
in the Fourier transform sense
\begin{align*}
    \mathcal{F}_{x_1,\cdots,x_N}(\mathcal{L}_{I} g)(\xi)
    := \prod_{i\in I}\big(1+|2\pi \xi_i|^2\big)^{1/2}\,
       \mathcal{F}_{x_1,\cdots,x_N}(g)(\xi).
\end{align*}
Finally, we recall some standard notations for Strichartz estimates. For any $2\leq p\leq 6$, we set
\begin{enumerate}
    \item $p'$ to be the conjugate exponent of $p$, i.e.,
    \begin{align}\label{eq:p'}
        \frac{1}{p'}+\frac{1}{p}=1;
    \end{align}
    \item $\theta_p$ by
    \begin{align}\label{eq:theta-p}
        \frac{2}{\theta_p}:=3\left(\frac{1}{2}-\frac{1}{p}\right).
    \end{align}
\end{enumerate}
The pair $(p,\theta_p)$ is called Schr\"odinger admissible on $\R^3$. In particular, $(6,2)$ is the endpoint admissible pair. 
The notation $\theta_p'$ is also defined by \eqref{eq:p'}, i.e., $ \frac{1}{\theta_p}+\frac{1}{\theta_p'}=1.$

\subsection{Function spaces}\label{sec:func}
For the $N$-body problem \eqref{eq:1.1}, a main challenge is to define functional spaces that can handle the Coulomb singularities. We first introduce the antisymmetric subspaces $\cH_I$ and its mixed regularity variant $H^1_{I,\rm mix}$. We then define the $L^{p,2}_{x_j}$ and $L^{p,2}_{j,k}$ spaces needed to treat electron–nucleus and electron–electron interactions, and finally present the full evolution space $X_{p,T}$ and its mixed regularity counterpart $X_{I,p,T}^1$.

Let $\cH=L^2((\R^3)^{N})$. For any $I\subset \{1,\dots,N\}$, define the Hilbert space of $I$-antisymmetric wavefunctions as
\begin{align}
    \cH_{I}:=\{g\in \cH:\; g \text{ is antisymmetric with respect to } I\}.
\end{align}
For mixed regularity, we set
\[
H^1_{I,\rm mix}:=\{g\in \cH_I:\; \cL_I g\in \cH\},
\qquad 
\|g\|_{H^1_{I,\rm mix}}:=\|\cL_I g\|_{\cH}.
\]
For spaces $A$ and $B$, the space of bounded operators $\cB(A,B)$ is endowed with the operator norm
\[
\|T\|_{\cB(A,B)}:=\sup_{\|u\|_{A}=1}\|Tu\|_B,
\]
with the shorthand $\cB(A):=\cB(A,A)$.

\medskip
We now introduce the functional spaces used for the evolution problem \eqref{eq:1.1}.  
To treat the electron–nucleus potential $V(\cdot,x_j)$, for $1<p<\infty$, we define  
\[
L^{p,2}_{x_j}=L^p(\R^3_{x_j},L^2((\R^3)^{N-1}))
\]
with norm
\[
\|g\|^p_{L^{p,2}_{x_j}}
=\int_{\R^3_{x_j}}\Bigg(\int_{(\R^3)^{N-1}}
   |g|^2\,dx_1\cdots\widehat{dx_j}\cdots dx_N\Bigg)^{p/2}dx_j,
\]
where $\widehat{dx_j}$ indicates omission of the $j$-th variable. We also write $L^{p,2}_{j}$ for $L^{p,2}_{x_j}$.  For the electron–electron potential $W(x_j,x_k)$, we first change variables to  
\[
r_{j,k}:=\tfrac{1}{2}(x_j-x_k),\qquad R_{j,k}:=\tfrac{1}{2}(x_j+x_k),
\]
and introduce the unitary operator $\mathcal{R}_{j,k}$ by
\begin{align}\label{eq:0}
  \MoveEqLeft  \mathcal{R}_{j,k}g(r_{j,k},R_{j,k},x_1,\cdots,x_{j-1},x_{j+1},\cdots,x_{k-1},x_{k+1},\cdots,x_N)\notag\\
  &=g(\cdots,x_{j-1},(r_{j,k}+R_{j,k}),x_{j+1},\cdots,x_{k-1},(R_{j,k}-r_{j,k}),x_{k+1},\cdots).
\end{align}
We then define
\[
L^{p,2}_{j,k}=L^p(\R^3_{r_{j,k}},L^2((\R^3)^{N-1}))
\]
with norm
\[
\|g\|^p_{L^{p,2}_{j,k}}
=\int_{\R^3_{r_{j,k}}}\Bigg(\int_{(\R^3)^{N-1}}
   |\mathcal{R}_{j,k}g|^2\,dR_{j,k}\,dx_1\cdots\widehat{dx_j}\cdots\widehat{dx_k}\cdots dx_N\Bigg)^{p/2}dr_{j,k}.
\]
 Obviously,
 \begin{align}\label{Lp2-rjk}
\|g\|_{L^{p,2}_{j,k}}=\|\mathcal{R}_{j, k} g\|_{L^{p,2}_{r_{j,k}}}.     
 \end{align}
For future convenience, we will use the unified notation $L^{p,2}_D$ for $D\subset{1,\dots,N}$ with $1\le |D|\le 2$. 
More precisely, if $D=\{j\}\subset \{1,\cdots,N\}$, we have $ L^{p,2}_D=L^{p,2}_j;$  if $D=\{j,k\}\subset \{1,\cdots,N\}$, we have $L^{p,2}_D=L^{p,2}_{j,k}.$

For the time-dependent problem \eqref{eq:1.1}, we work in the functional space
\[
X_{p,T}= L_t^\infty(I_T,\cH)\,\bigcap_{\substack{D\subset \{1,\dots,N\}\\ 1\le |D|\le 2}}
   L_t^{\theta_{p}}(I_T,L_D^{p,2}), \qquad p>2,
\]
endowed with the norm
\[
\|u\|_{X_{p,T}}
=\max\left\{\|u\|_{L_t^\infty(I_T,\cH)},\;
 \max_{D\subset \{1,\dots,N\},\; 1\le |D|\le 2}
   \|u\|_{L_t^{\theta_p}(I_T,L_D^{p,2})}\right\}.
\]
In addition, for any $D\subset\{1,\dots,N\}$ with $1\le |D|\le 2$, the dual space of $L^{\theta_{p}}_t(I_T,L^{p,2}_D)$ is $L^{\theta'_{p}}_t(I_T,L^{p',2}_D)$.

Concerning mixed regularity for the time-dependent problem \eqref{eq:1.1}, we also need the following functional space
\begin{align*}
    X_{I,p,T}^1=\{u\in X_{p,T}\bigcap L_t^\infty(I_T,\cH_I);\; \cL_I u\in X_{p,T}\}
\end{align*}
with the norm $ \|u\|_{X_{I,p,T}^1}=\|\cL_I u\|_{X_{p,T}}$.

\subsection{Existence and mixed regularity}\label{sec:existence} 
We now state our main result on existence and regularity, under the following assumption.
\begin{assumption}\label{ass}
Assume $\alpha$, $p>0$, and $T>0$ satisfy:
\begin{enumerate}
    \item\label{ass1} $0<\alpha<\tfrac{1}{2}$ and $\tfrac{6}{3-2\alpha}<p<6$;
    \item\label{ass2} $1/\theta_{p}<1/\theta'_{\widetilde{p}}$ for some $\tfrac{6}{1+2\alpha}<\widetilde{p}< 6$;
    \item\label{ass3} $C_{T,1}(Z+N)NT^{1/\theta'_{\widetilde{p}}-1/\theta_p}<\tfrac{1}{2}$, 
    with $C_{T}:=\max\{C_{T,1},C_{T,2},C_{T,3}\}\geq 1$. Here $C_{T,1}$, $C_{T,2}$ and $C_{T,3}$ are constants only dependent on $\alpha, p,\widetilde{p}$ given by \eqref{eq:CT1}, \eqref{eq:CT2} and \eqref{eq:CT3} respectively.
\end{enumerate}
\end{assumption}
\begin{remark}[Nonemptiness of Assumption \ref{ass} and estimate on $T$]\label{eq:T}
In Assumption \ref{ass}, we can take $\alpha=\frac{1}{2}-$, $p=3-$ and $\widetilde{p}=3+$. With this choice, $1/\theta'_{\widetilde{p}}-1/\theta_p= \frac{1}{2}-$. Thus Assumption \ref{ass} is not empty and $T\lesssim (ZN+N^2)^{-2-}$.
\end{remark}

Concerning the existence of solutions to \eqref{eq:1.1}, we have
\begin{theorem}[Existence of solutions]\label{th:existence}
Let $a_{\mu}\in L^\infty(\R)$. For every $u_0\in \cH$, the problem \eqref{eq:1.1} has a unique global-in-time solution $u\in X_{p,\infty}$. Furthermore, under Assumption~\ref{ass} on $p$ and $T$, we have
\begin{align}\label{eq:u-solution}
    \|u\|_{X_{p,T}} \lesssim_{p} \|u_0\|_{\cH}.
\end{align}
\end{theorem}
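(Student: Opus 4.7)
The proof follows a standard Duhamel/Strichartz fixed-point strategy on $X_{p,T}$. Rewrite \eqref{eq:1.1} in integral form
\[
u(t)=e^{\frac{it}{2}\Delta}u_0 - i\int_0^t e^{\frac{i(t-s)}{2}\Delta}\Big(\sum_{j=1}^N V(s,x_j)+\sum_{1\leq j<k\leq N}W(x_j,x_k)\Big) u(s)\,ds =: \Phi(u)(t),
\]
where $\Delta=\sum_{j=1}^N\Delta_j$ is the free Laplacian on $(\R^3)^N$. The plan is to show that $\Phi$ is a contraction on a ball in $X_{p,T}$ for $T$ satisfying Assumption~\ref{ass}~(3). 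Global existence will then follow from $L^2$-conservation and iteration on successive intervals of length $T$.

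The first ingredient is the mixed-norm Strichartz estimate for the free propagator on $(\R^3)^N$, to be established in Section~\ref{sec:3}. Since $e^{\frac{it}{2}\Delta}=\prod_j e^{\frac{it}{2}\Delta_j}$ acts unitarily on every $L^2_{x_\ell}$-factor, the standard $3$D Strichartz estimates applied in the distinguished variable $x_j$ (or in $r_{j,k}$ after the unitary change of variables $\mathcal{R}_{j,k}$ for a pair $D=\{j,k\}$) extend to the mixed-norm spaces $L^{p,2}_D$, yielding
\[
\|e^{\frac{it}{2}\Delta}f\|_{X_{p,T}}\lesssim_p\|f\|_\cH,\qquad
\Big\|\int_0^t e^{\frac{i(t-s)}{2}\Delta}F(s)\,ds\Big\|_{X_{p,T}}\lesssim_p \|F\|_{L^{\theta'_q}_tL^{q',2}_D}
\]
for every admissible pair $(q,\theta_q)$ and every $D\subset\{1,\dots,N\}$ with $1\leq|D|\leq 2$.

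The second ingredient is a Coulomb-type bound on $Vu$ and $Wu$ in the dual mixed-norm spaces. Using $|x|^{-1}\in L^{3,\infty}(\R^3)$ and a Lorentz-H\"older argument in the $x_j$-direction (with $L^2$ in the remaining variables), one obtains the Hardy-type inequality $\||x_j-a_\mu(t)|^{-1}g\|_{L^{p',2}_j}\lesssim\|g\|_{L^{\widetilde p,2}_j}$ for the auxiliary admissible pair $(\widetilde p,\theta_{\widetilde p})$ of Assumption~\ref{ass}~(2). Combined with H\"older in time, legitimate because $1/\theta'_p-1/\theta_{\widetilde p}>0$ by Assumption~\ref{ass}~(2), this produces
\[
\Big\|\sum_{j=1}^N V(\cdot,x_j)u\Big\|_{L^{\theta'_p}_tL^{p',2}_D}\lesssim ZN\,T^{1/\theta'_p-1/\theta_{\widetilde p}}\max_{D'}\|u\|_{L^{\theta_{\widetilde p}}_tL^{\widetilde p,2}_{D'}}.
\]
The two-body term is treated analogously: $\mathcal{R}_{j,k}$ turns $|x_j-x_k|^{-1}$ into a Coulomb singularity in $r_{j,k}$, reducing the $L^{p,2}_{j,k}$-estimate to the one-body case on $\R^3_{r_{j,k}}$; the $\sim N^2/2$ pairs contribute a factor $N^2$, giving altogether the prefactor $(ZN+N^2)T^{1/\theta'_p-1/\theta_{\widetilde p}}$ anticipated in Assumption~\ref{ass}~(3).

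To close the estimate within $X_{p,T}$ alone, apply the dual Strichartz estimate to $u=\Phi(u)$ once more with target admissible pair $(\widetilde p,\theta_{\widetilde p})$, bounding the auxiliary norm $\max_D\|u\|_{L^{\theta_{\widetilde p}}_tL^{\widetilde p,2}_D}$ by $\|u_0\|_\cH$ plus a further copy of the Coulomb term. Working in the augmented norm $\|\cdot\|_{X_{p,T}}+\max_D\|\cdot\|_{L^{\theta_{\widetilde p}}_tL^{\widetilde p,2}_D}$, Assumption~\ref{ass}~(3) forces the total nonlinear coefficient below $1/2$, so Banach's fixed-point theorem yields a unique $u\in X_{p,T}$ satisfying \eqref{eq:u-solution}. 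Global existence then follows because $H_N(t)$ is symmetric with real potentials, so $\|u(t)\|_\cH=\|u_0\|_\cH$ is conserved, and the smallness constant in Assumption~\ref{ass}~(3) depends only on $Z,N,T$, not on $\|u_0\|_\cH$; hence the local result may be iterated on $[kT,(k+1)T]$ without shrinkage of the time step. The main technical obstacle is the two-body term: one must verify that $\mathcal{R}_{j,k}$ is compatible with the dual Strichartz framework on $X_{p,T}$ and that the constants add up cleanly enough for Assumption~\ref{ass}~(3) to absorb the full $(ZN+N^2)$ contribution in a single contraction step.
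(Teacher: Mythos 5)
Your overall architecture (Duhamel formula, mixed-norm Strichartz estimates obtained by reducing to the distinguished variable $x_j$ or to $r_{j,k}$ via $\mathcal{R}_{j,k}$, a contraction for $T$ small as in Assumption~\ref{ass}(3), then $L^2$-conservation and iteration for global existence) is exactly the paper's. The gap is in your central potential estimate. You propose to bound $\|\,|x_j-a_\mu(t)|^{-1}g\|_{L^{p',2}_j}\lesssim\|g\|_{L^{\widetilde p,2}_j}$ by Lorentz--H\"older with $|x|^{-1}\in L^{3,\infty}$, but this conflates the second index of the paper's mixed norm $L^{p,2}_j=L^p(\R^3_{x_j},L^2)$ with a Lorentz index. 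The scaling-critical exponent relation $1/p'=1/3+1/\widetilde p$ forces an endpoint application of O'Neil's inequality whose output is $L^{p',s}$ with $s\geq \widetilde p>p'$, which does \emph{not} embed into $L^{p'}$; the naive non-Lorentz splitting fails logarithmically both near the singularity and at infinity. Worse, since $p'=3/2^+<2$, the far-field region $|x_j-a_\mu|>1$ cannot be controlled in $L^{p',2}_j$ by the norms $\|u\|_{\cH}$ and $\|u\|_{L^{p,2}_j}$ available in $X_{p,T}$ at all, so landing $Vu$ directly in the dual Strichartz space is structurally problematic, and your proposed fix via an ``augmented norm'' does not address this.

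The paper avoids the issue entirely by never estimating $Vu$ in a dual Strichartz norm. Instead it works with the bilinear pairing $\int_0^T\langle S(Vu),v\rangle\,dt$ and splits the Coulomb singularity as $|x_j-a_\mu|^{-1}=|x_j-a_\mu|^{-\alpha}\cdot|x_j-a_\mu|^{-(1-\alpha)}$ with $0<\alpha<\tfrac12$, placing one fractional power on $u$ and the other on $S^*v$, and measuring \emph{both} resulting factors in $\cH=L^2$ (see \eqref{eq:SVu1}--\eqref{eq:Sv}). Near the singularity each fractional power is subcritical, so plain H\"older with the cutoff $\mathbbm{1}_{|x_j-a_\mu|\leq 1}$ works at non-endpoint exponents; away from it each power is bounded by $1$, so the conserved $L^2$ norm suffices. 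The Strichartz input is then applied only to $S^*v$, and duality recovers the $X_{p,T}$ bound \eqref{eq:CT1}. If you want to keep your direct approach you would need Lorentz-refined Strichartz estimates in the mixed-norm setting and a genuinely different far-field argument; the $\alpha/(1-\alpha)$ splitting is the cleaner route and is what makes Assumption~\ref{ass}(1)--(2) (the constraints $\tfrac{6}{3-2\alpha}<p<6$ and $\tfrac{6}{1+2\alpha}<\widetilde p<6$) appear.
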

This result can also be found in \cite{Yajima2} in a more general setting. 
Here we modify the proof to make it consistent with Theorems~\ref{th:mixregularity} and Theorem~\ref{th:justif}; a detailed proof is presented in Section~\ref{sec:exist}. 

\medskip

Concerning the mixed regularity of solutions to \eqref{eq:1.1}, we have
\begin{theorem}[Mixed regularity]\label{th:mixregularity}
Let $I\subset\{1,\dots,N\}$ and  $a_{\mu}\in L^\infty(\R)$. For every $u_0\in H^1_{I,\rm mix}$, under Assumption~\ref{ass}, the problem \eqref{eq:1.1} has a unique solution $u\in X^1_{I,p,T}$ with
\begin{align}\label{eq:u-mix}
    \|u\|_{X^1_{I,p,T}} \lesssim_{p} \|u_0\|_{H^1_{I,\rm mix}}.
\end{align}
\end{theorem}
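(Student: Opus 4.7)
The plan is to mirror the proof of Theorem~\ref{th:existence} one regularity level higher, by working directly with $\mathcal{L}_I u$ and exploiting the fact that $\mathcal{L}_I$ is a Fourier multiplier that commutes with every Laplacian $\Delta_k$, hence with the free propagator $U_0(t):=\exp\bigl((it/2)\sum_k\Delta_k\bigr)$. Because $H_N(t)$ preserves the antisymmetric subspace $\mathcal{H}_I$ (permutations of particles in $I$ commute with the kinetic operator, and $V$, $W$ are symmetric in the particle labels), the solution $u$ produced by Theorem~\ref{th:existence} automatically remains in $\mathcal{H}_I$ whenever $u_0\in \mathcal{H}_I$, so the functional setup $X^1_{I,p,T}$ is consistent.

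Applying $\mathcal{L}_I$ to Duhamel's formula for $u$ yields
\begin{equation*}
\mathcal{L}_I u(t) \;=\; U_0(t)\,\mathcal{L}_I u_0 \;-\; i\int_0^t U_0(t-s)\,\mathcal{L}_I\Bigl[\sum_{j=1}^N V(s,x_j)\,u(s) + \sum_{j<k} W(x_j,x_k)\,u(s)\Bigr] ds.
\end{equation*}
To this I would apply the Strichartz inequality in $X_{p,T}$ (the same one used for Theorem~\ref{th:existence}), dualising via the Schr\"odinger admissible pair $(\widetilde p,\theta_{\widetilde p})$ supplied by Assumption~\ref{ass}\eqref{ass2}, to obtain
\begin{equation*}
\|\mathcal{L}_I u\|_{X_{p,T}} \;\lesssim_p\; \|\mathcal{L}_I u_0\|_{\mathcal{H}} + \sum_{j=1}^N \bigl\|\mathcal{L}_I(V(\cdot,x_j)u)\bigr\|_{L^{\theta'_{\widetilde p}}_t L^{\widetilde p',2}_{j}} + \sum_{j<k} \bigl\|\mathcal{L}_I(W(x_j,x_k)u)\bigr\|_{L^{\theta'_{\widetilde p}}_t L^{\widetilde p',2}_{j,k}}.
\end{equation*}

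The core of the argument is to control each source term by $\|\mathcal{L}_I u\|_{X_{p,T}}$ times a small prefactor. For a given $D=\{j\}$ or $D=\{j,k\}$, I would factor $\mathcal{L}_I = \bigl(\prod_{i\in I\setminus D}(1-\Delta_i)^{1/2}\bigr)\bigl(\prod_{i\in I\cap D}(1-\Delta_i)^{1/2}\bigr)$; the first factor commutes with the Coulomb kernel (which is supported on the $D$-variables) and can be pulled directly through to act on $u$, so at most two $(1-\Delta_i)^{1/2}$ remain applied to the singular product. These residual factors are then absorbed via a fractional Leibniz/Hardy estimate in the mixed $L^{\widetilde p',2}_D$ spaces, combining Hardy's inequality for $|x|^{-1}$ on $\R^3$ with the Sobolev embedding $H^1(\R^3)\hookrightarrow L^p(\R^3)$ (valid for $p<6$), applied fiberwise in the remaining variables; the change of variables $r_{j,k}=(x_j-x_k)/2$ (with $\mathcal R_{j,k}$ from \eqref{eq:0}) reduces the two-body case to a one-body estimate. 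Summing over $j$ and $(j,k)$ then produces a total prefactor proportional to $ZN+N^2$.

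Integrating in time via H\"older provides the gap exponent $T^{1/\theta'_{\widetilde p}-1/\theta_p}$, which is positive by Assumption~\ref{ass}\eqref{ass2}. Assumption~\ref{ass}\eqref{ass3} then ensures that the absorbing coefficient $C_{T,1}(Z+N)N\,T^{1/\theta'_{\widetilde p}-1/\theta_p}<1/2$, allowing the source-term contribution to be moved to the left-hand side, yielding \eqref{eq:u-mix}; uniqueness in $X^1_{I,p,T}$ is inherited from uniqueness in $X_{p,T}$ proved in Theorem~\ref{th:existence}. \emph{The main obstacle}, in my view, is the sharp Leibniz-Hardy-type estimate controlling $\bigl\|(1-\Delta_j)^{1/2}\bigl(|x_j-a_\mu(s)|^{-1}v\bigr)\bigr\|_{L^{\widetilde p',2}_{j}}$ by $\|(1-\Delta_j)^{1/2}v\|_{L^{p,2}_{j}}$ with a constant independent of $N$ and $Z$, together with its two-body counterpart for $W$: the admissible window in Assumption~\ref{ass} ($\alpha<1/2$, $6/(3-2\alpha)<p<6$, $\widetilde p>6/(1+2\alpha)$) is calibrated exactly so that these inequalities close without losing regularity.
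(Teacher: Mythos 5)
Your overall architecture — commuting $\cL_I$ through $U_0$, applying Duhamel, dualizing with the admissible pair from Assumption~\ref{ass}\eqref{ass2}, and closing by absorption under Assumption~\ref{ass}\eqref{ass3} — matches the paper, as does your observation that the factors $(1-\Delta_i)^{1/2}$ with $i\in I\setminus D$ pass through the Coulomb kernel. But there is a genuine gap in how you propose to handle the residual factors when \emph{both} indices of an electron--electron pair lie in $I$. In that case one must control $(1-\Delta_j)^{1/2}(1-\Delta_k)^{1/2}\bigl(|x_j-x_k|^{-1}\cL_{I,j,k}u\bigr)$, and after the Leibniz expansion the worst term carries $\nabla_j\otimes\nabla_k|x_j-x_k|^{-1}\sim|x_j-x_k|^{-3}$ (i.e.\ $|x_j-x_k|^{-2-\alpha}$ after splitting off $|x_j-x_k|^{-(1-\alpha)}$ onto $S^*v$). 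Your claim that the change of variables $r_{j,k}=(x_j-x_k)/2$ ``reduces the two-body case to a one-body estimate'' fails here: the classical Hardy inequality on $\R^3$ (Lemma~\ref{Hardy1}) only reaches weights $|r|^{-s}$ with $s<3/2$, far short of $s=2+\alpha$. The paper closes this term only by invoking the antisymmetry of $u$ with respect to the pair $\{j,k\}\subset I$ — propagated from $u_0\in\cH_I$ via \eqref{u-anti} — together with the improved Hardy inequality for antisymmetric functions (Lemma~\ref{Hardy}, valid for $s\in[2,5/2)$, which is why $\alpha<1/2$ appears in Assumption~\ref{ass}\eqref{ass1}). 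You mention antisymmetry only as a bookkeeping matter (``the functional setup is consistent''), so your argument as written has no mechanism to control this supercritical singularity; without the cancellation coming from $u(P_{j,k}x)=-u(x)$ the estimate is simply false for general $u$.

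A secondary, less serious divergence: you aim for a direct multiplication bound of the form $\|(1-\Delta_j)^{1/2}(|x_j-a_\mu|^{-1}v)\|_{L^{\widetilde p',2}_j}\lesssim\|(1-\Delta_j)^{1/2}v\|_{L^{p,2}_j}$, whereas the paper never proves such an operator inequality. Instead it uses the decomposition \eqref{frac-laplace-dec} to replace $(1-\Delta_j)^{1/2}$ by the zeroth-order multipliers $(1-\Delta_j)^{-1/2}$ and $\nabla_j(1-\Delta_j)^{-1/2}$ (bounded on $L^{p,2}_D$ by Theorem~\ref{th:2.7}) acting on the test function $v$, and splits the Coulomb singularity as $|x|^{-\alpha}\cdot|x|^{-(1-\alpha)}$ between $u$ and $S^*v$ in a bilinear pairing. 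This dualized splitting is what makes the exponent window of Assumption~\ref{ass} close; you would need to supply an equivalent device to make your ``Leibniz--Hardy'' step rigorous even in the one-body and single-index cases.
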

The proof of Theorem~\ref{th:mixregularity} is given in Section~\ref{sec:mix}.

\subsection{Sparse grid approximation and discretization}\label{sec:wavelet}
We now justify the sparse grid approximation \eqref{hyper-approx} and consider its discretization. The main result of this subsection states 
\begin{theorem}[Sparse grid approximation]\label{th:justif}
Let $a_{\mu}\in L^\infty(\R)$, and $I_1,I_2$ be as in Proposition~\ref{prop:anti}. Let $\mathcal{P}_R$ be a projector satisfying Assumption~\ref{ass:1.8}. For every $u_0\in H_{I_1,\rm mix}^1\cap H_{I_2,\rm mix}^1$,
the problem \eqref{hyper-approx} has a unique global-in-time solution $u_R\in X_{p,\infty}$. Furthermore, $\mathcal{P}_R u_R = u_R$ and under Assumption~\ref{ass},
\begin{align}\label{error-bound}
    \|u-u_R\|_{X_{p,T}}\lesssim_{p} \frac{1}{R}\sum_{\ell=1,2}\|u_0\|_{H_{I_\ell,\rm mix}^1}.
\end{align}
\end{theorem}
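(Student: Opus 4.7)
The plan is to adapt the Duhamel--Strichartz fixed-point framework of Theorems~\ref{th:existence}--\ref{th:mixregularity} to the projected dynamics~\eqref{hyper-approx}, exploiting the two structural properties of $\mathcal{P}_R$ that are explicitly available: $L^2$-boundedness with $\|\mathcal{P}_R\|_{\cB(\cH)}\le 1$, and commutation with each $\Delta_j$ (hence with the free propagator $e^{it\Delta/2}$, where $\Delta:=\sum_j\Delta_j$, and with every $\cL_{I_\ell}$). For existence, I would rewrite~\eqref{hyper-approx} in Duhamel form
\[
u_R(t)=e^{it\Delta/2}\mathcal{P}_R u_0-i\int_0^t e^{i(t-s)\Delta/2}\mathcal{P}_R\Big(\sum_j Vu_R+\sum_{j<k}Wu_R\Big)\,ds
\]
and rerun the contraction argument of Theorem~\ref{th:existence}: the projected nonlinearity inherits every Strichartz and Hardy-type bound on $Vu$ and $Wu$, and Assumption~\ref{ass}(3) yields a contraction constant $<1/2$; global existence follows by $\cH$-norm conservation on the range of $\mathcal{P}_R$. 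The invariance $\mathcal{P}_R u_R=u_R$ is then immediate: applying $(1-\mathcal{P}_R)$ to the Duhamel equation annihilates every interaction term (since $(1-\mathcal{P}_R)\mathcal{P}_R=0$), so $w_R:=(1-\mathcal{P}_R)u_R$ solves the free Schr\"odinger equation with $w_R(0)=(1-\mathcal{P}_R)\mathcal{P}_R u_0=0$, forcing $w_R\equiv 0$.

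For the error bound I split $e:=u-u_R=e_\perp+e_\|$ with $e_\perp:=(1-\mathcal{P}_R)u$ and $e_\|:=\mathcal{P}_R e=\mathcal{P}_R u-u_R$; using $u_R=\mathcal{P}_R u_R$, these components obey the decoupled Schr\"odinger equations
\[
i\partial_t e_\perp=-\tfrac{1}{2}\Delta e_\perp+(1-\mathcal{P}_R)(Vu+Wu),\qquad e_\perp(0)=(1-\mathcal{P}_R)u_0,
\]
\[
i\partial_t e_\|=-\tfrac{1}{2}\Delta e_\|+\mathcal{P}_R\bigl(V(e_\|+e_\perp)+W(e_\|+e_\perp)\bigr),\qquad e_\|(0)=0.
\]
The $L^\infty_t\cH$-part of the $e_\perp$-bound is immediate from Assumption~\ref{ass:1.8}(c) applied pointwise in $t$ and Theorem~\ref{th:mixregularity}:
\[
\|e_\perp(t)\|_\cH\le R^{-1}\Bigl\|\sum_\ell\cL_{I_\ell}u(t)\Bigr\|_\cH\lesssim_p R^{-1}\sum_\ell\|u_0\|_{H^1_{I_\ell,\mathrm{mix}}}.
\]
For the $L^{\theta_p}_tL^{p,2}_D$-part I would exploit $[\mathcal{P}_R,\cL_{I_\ell}]=0$ to write $e_\perp=\cL_{I_\ell}^{-1}(1-\mathcal{P}_R)\cL_{I_\ell}u$ and combine the $L^{p,2}_D$-boundedness of the Bessel potentials $\cL_{I_\ell}^{-1}$ ($1<p<\infty$) with the $X_{p,T}$-bound on $\cL_{I_\ell}u$ from Theorem~\ref{th:mixregularity} to transfer the $R^{-1}$ smallness from the $L^2$-scale to the mixed-norm scale. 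For $e_\|$, the Strichartz/Hardy machinery of Theorem~\ref{th:existence} applied to its equation yields
\[
\|{e_\|}\|_{X_{p,T}}\le C_{T,1}(Z+N)NT^{1/\theta'_{\widetilde p}-1/\theta_p}\|e\|_{X_{p,T}}+C\|e_\perp\|_{X_{p,T}},
\]
with the first prefactor $<1/2$ by Assumption~\ref{ass}(3), absorbed on the left via $\|e\|_{X_{p,T}}\le\|{e_\|}\|_{X_{p,T}}+\|e_\perp\|_{X_{p,T}}$; summing the two parts closes~\eqref{error-bound}.

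\emph{The hard part} is the $L^{\theta_p}_tL^{p,2}_D$-component of the $e_\perp$-bound: Assumption~\ref{ass:1.8}(c) supplies the $R^{-1}$ factor only in the $L^2$-scale, and a naive Strichartz estimate on the $e_\perp$-equation would leave a dual-norm of $(1-\mathcal{P}_R)(Vu+Wu)$ that cannot be bounded by $R^{-1}$ alone. The detour through $\cL_{I_\ell}^{\pm 1}$, crucially using $[\mathcal{P}_R,\cL_{I_\ell}]=0$ together with the mixed regularity of $u$ proved in Theorem~\ref{th:mixregularity}, is what transfers the smallness into the Strichartz mixed-norm scale; the delicate bookkeeping among Strichartz dual pairs, Hardy-type bounds for the Coulomb singularities, and the $L^p$-multiplier theory for the Bessel potentials is the technical crux on which the whole estimate rests.
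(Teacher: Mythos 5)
Your overall architecture matches the paper's: the same splitting $u-u_R=(1-\mathcal{P}_R)u+(\mathcal{P}_Ru-u_R)$, the same absorption of $\mathcal{P}_RQ(u-u_R)$ via the contraction constant from Assumption~\ref{ass}, and the same one-line argument for $\mathcal{P}_Ru_R=u_R$. The $L^\infty_t\cH$ part of the $(1-\mathcal{P}_R)u$ bound is also fine (and slightly more direct than the paper's). The problem is the step you yourself flag as the crux: the $L^{\theta_p}_tL^{p,2}_D$ component of $\|(1-\mathcal{P}_R)u\|_{X_{p,T}}$.

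Your proposed mechanism --- write $(1-\mathcal{P}_R)u=\cL_{I_\ell}^{-1}(1-\mathcal{P}_R)\cL_{I_\ell}u$ and use $L^{p,2}_D$-boundedness of $\cL_{I_\ell}^{-1}$ --- does not produce the factor $R^{-1}$. After applying the Mikhlin bound for $\cL_{I_\ell}^{-1}$ you are left with $\|(1-\mathcal{P}_R)\cL_{I_\ell}u\|_{L^{p,2}_D}$, and there is no smallness available there: Assumption~\ref{ass:1.8} gives the $R^{-1}$ gain only as an $L^2\to L^2$ statement, $(1-\mathcal{P}_R)$ need not be bounded on $L^{p,2}_D$ at all, and invoking the $L^2$ bound on $\cL_{I_\ell}u$ would require second-order mixed regularity $\cL_{I_{\ell'}}\cL_{I_\ell}u\in\cH$, which Theorem~\ref{th:mixregularity} does not provide. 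The alternative of Sobolev-embedding $L^2\hookrightarrow L^{p,2}_D$ costs $3(1/2-1/p)$ derivatives in $x_j$ and, by interpolation against the available first-order regularity, degrades the rate to $R^{-1/2}$ or worse for $p$ near $3$. The paper avoids this by \emph{not} estimating $(1-\mathcal{P}_R)u$ pointwise in time: it writes $(1-\mathcal{P}_R)u=(1-\mathcal{P}_R)U_0u_0-i(1-\mathcal{P}_R)Qu$ via Duhamel, bounds the free part by the Strichartz estimate \eqref{eq:2.2a} applied to the datum $(1-\mathcal{P}_R)u_0$ (whose $\cH$-norm carries the $R^{-1}$), and bounds the Duhamel part by the modified Strichartz estimate \eqref{eq:2.3b} for the operator $\big(\sum_\ell\cL_{I_\ell}\big)^{-1}(1-\mathcal{P}_R)S$ --- a Christ--Kiselev/$TT^*$ argument that inserts the $L^2$-contraction $R(1-\mathcal{P}_R)\big(\sum_\ell\cL_{I_\ell}\big)^{-1}$ at the $\cH$ stage, paired with the estimates on $\cL_{I_\ell}Vu$ and $\cL_{I_\ell}Wu$ from the proof of Theorem~\ref{th:mixregularity}. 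This is the missing ingredient; the same commutation-based $TT^*$ device (Proposition~\ref{th:2.8}) is also what justifies your claim that the projected nonlinearity ``inherits'' the Strichartz bounds in the existence step, since $\mathcal{P}_R$ alone is not bounded on $L^{p,2}_D$.
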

The proof of Theorem~\ref{th:justif} is given in Section~\ref{sec:5}.  
We now consider the discretized projector $\mathcal{P}_R^{(L)}$ defined in \eqref{eq:P_R}. 
With the following 
proposition, we can see that $\mathcal{P}_R^{(L)}$ is a direct application of Theorem \ref{th:justif}. 
\begin{proposition}\label{prop:P_R}
The projector $\mathcal{P}_R^{(L)}$ satisfies Assumption~\ref{ass:1.8}, 
and Theorem~\ref{th:justif} holds for the problem \eqref{eq:wavelet-discret}.
\end{proposition}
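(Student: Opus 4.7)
The plan is to verify the three properties in Assumption~\ref{ass:1.8} for $\mathcal{P}_R^{(L)}$, after which Theorem~\ref{th:justif} applies verbatim, and the discretized evolution \eqref{eq:wavelet-discret} follows by expanding $u_R^{(L)}=\mathcal{P}_R^{(L)}u_R^{(L)}$ in the chosen orthonormal basis. The key observation is that $\mathcal{P}_R^{(L)}$ is a Fourier multiplier. By the construction of the Meyer-type basis, for each fixed $\vec{\mathbf{l}}\in(\mathbb{Z}^3)^N$ the family $\{\psi^{(L)}_{\vec{\mathbf{j}},\vec{\mathbf{l}}}\}_{\vec{\mathbf{j}}\in(\mathbb{Z}^3)^N}$ is an orthonormal basis of the subspace $\mathcal{K}_{\vec{\mathbf{l}}}\subset L^2((\mathbb{R}^3)^N)$ consisting of functions whose $N$-body Fourier transform is supported in the tensor cube $C_{\vec{\mathbf{l}}}:=L^{-1}\vec{\mathbf{l}}+([0,L^{-1})^3)^N$; hence $\sum_{\vec{\mathbf{j}}}|\psi^{(L)}_{\vec{\mathbf{j}},\vec{\mathbf{l}}}\rangle\langle\psi^{(L)}_{\vec{\mathbf{j}},\vec{\mathbf{l}}}|$ is the orthogonal projection onto $\mathcal{K}_{\vec{\mathbf{l}}}$, i.e.\ multiplication by $\mathbbm{1}_{C_{\vec{\mathbf{l}}}}$ in Fourier space. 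Since the index set $\mathcal{D}_{L,R}$ defined in \eqref{eq:omega_R} places a condition only on $\vec{\mathbf{l}}$ (all $\vec{\mathbf{j}}$ being admitted for each admissible $\vec{\mathbf{l}}$), one gets
$$\mathcal{P}_R^{(L)}=\mathcal{F}^{-1}_{x_1,\ldots,x_N}\,\mathbbm{1}_{\Omega_R}\,\mathcal{F}_{x_1,\ldots,x_N},\qquad \Omega_R:=\bigcup_{\vec{\mathbf{l}}:\,C_{\vec{\mathbf{l}}}\cap\mathcal{D}(R)\neq\emptyset} C_{\vec{\mathbf{l}}}.$$

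From this representation, the three items of Assumption~\ref{ass:1.8} are immediate. Item (i) (projector) is clear since $\mathbbm{1}_{\Omega_R}$ is a characteristic function. Item (ii) (commutativity with $\Delta_j$) holds because $\Delta_j$ is itself a Fourier multiplier, namely multiplication by $-|2\pi\xi_j|^2$, and two Fourier multipliers always commute. For item (iii) (approximation), by construction $\mathcal{D}(R)\subseteq\Omega_R$ (each $\xi\in\mathcal{D}(R)$ lies in some $C_{\vec{\mathbf{l}}}$ which then meets $\mathcal{D}(R)$), so $\Omega_R^c\subseteq\mathcal{D}(R)^c$; therefore
$$\|(1-\mathcal{P}_R^{(L)})u\|_{L^2}^2=\int_{\Omega_R^c}|\mathcal{F}_{x_1,\ldots,x_N}u(\xi)|^2\,d\xi\leq\int_{\mathcal{D}(R)^c}|\mathcal{F}_{x_1,\ldots,x_N}u(\xi)|^2\,d\xi,$$
and the right-hand side is bounded by $R^{-2}\bigl\|\sum_{\ell=1,2}\mathcal{L}_{I_\ell}u\bigr\|_{L^2}^2$ by the same hyperbolic-cross computation that underlies Lemma~\ref{lem:5.3}, since on $\mathcal{D}(R)^c$ one has $R<\sum_{\ell=1,2}\prod_{i\in I_\ell}(1+|\xi_i|^2)^{1/2}$ pointwise.

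With Assumption~\ref{ass:1.8} verified, Theorem~\ref{th:justif} directly produces a unique $u_R^{(L)}\in X_{p,\infty}$ satisfying $\mathcal{P}_R^{(L)}u_R^{(L)}=u_R^{(L)}$ together with the error bound \eqref{error-bound}. Since $u_R^{(L)}(t)$ lies in the range of $\mathcal{P}_R^{(L)}$, it admits the expansion $u_R^{(L)}(t,x)=\sum_{(\vec{\mathbf{j}},\vec{\mathbf{l}})\in\mathcal{D}_{L,R}}u^{(L)}_{\vec{\mathbf{j}},\vec{\mathbf{l}}}(t)\psi^{(L)}_{\vec{\mathbf{j}},\vec{\mathbf{l}}}(x)$, and testing \eqref{hyper-approx} against each basis element $\psi^{(L)}_{\vec{\mathbf{j}},\vec{\mathbf{l}}}$ produces exactly the coupled ODE system \eqref{eq:wavelet-discret}. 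The main (though modest) obstacle is pinpointing the Fourier-multiplier structure of $\mathcal{P}_R^{(L)}$ from the fact that $\mathcal{D}_{L,R}$ imposes no constraint on $\vec{\mathbf{j}}$; once this is recognized, both the verification of Assumption~\ref{ass:1.8} and the derivation of \eqref{eq:wavelet-discret} are routine.
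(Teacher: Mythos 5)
Your proof is correct and follows essentially the same route as the paper: both arguments rest on the observation that, since $\mathcal{D}_{L,R}$ constrains only $\vec{\mathbf{l}}$ and the Fourier supports of the $\psi^{(L)}_{\vec{\mathbf{j}},\vec{\mathbf{l}}}$ for distinct $\vec{\mathbf{l}}$ are disjoint, $\mathcal{P}_R^{(L)}$ acts in frequency as multiplication by the indicator of a union of cubes covering $\mathcal{D}(R)$, from which the projector, commutativity, and approximation properties all follow, and Theorem~\ref{th:justif} then applies. Your explicit identification of $\mathcal{P}_R^{(L)}$ as the Fourier multiplier $\mathbbm{1}_{\Omega_R}$ is just a slightly more packaged version of the paper's block-diagonal/disjoint-support argument.
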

\begin{proof}
Since $(\psi^{(L)}_{\vec{\mathbf{j}},\vec{\mathbf{l}}}(x))_{\vec{\mathbf{j}},\vec{\mathbf{l}}\in (\Z^3)^N}$ forms an orthonormal basis of $L^2((\R^3)^N)$, $\mathcal{P}_R^{(L)}$ is a projector.   
Moreover, as $\Supp\big(\mathcal{F}(\psi_{\vec{\mathbf{j}},\vec{\mathbf{l}}}^{(L)})\big)\cap 
\Supp\big(\mathcal{F}(\psi_{\vec{\mathbf{j}}',\vec{\mathbf{l}}'}^{(L)})\big)=\emptyset$ for $\vec{\mathbf{l}}\neq \vec{\mathbf{l}}'$, we have
\begin{align*}
\Delta_j
= \sum_{\vec{\mathbf{l}}}\sum_{\vec{\mathbf{j}},\vec{\mathbf{j}}'}
   \left<\psi^{(L)}_{\vec{\mathbf{l}},\vec{\mathbf{j}}'}, \Delta_j \psi^{(L)}_{\vec{\mathbf{l}},\vec{\mathbf{j}}}\right>
   \left|\psi^{(L)}_{\vec{\mathbf{l}},\vec{\mathbf{j}}'}\right>\!\left<\psi^{(L)}_{\vec{\mathbf{l}},\vec{\mathbf{j}}}\right|,
\quad j=1,\dots,N,
\end{align*}
which implies $[\Delta_j,P_R^{(L)}]=0$ for $j=1,\cdots,N$. Finally, by the definition of $\mathcal{D}_{L,R}$ in \eqref{eq:omega_R} and of $\psi_{\vec{\mathbf{j}},\vec{\mathbf{l}}}$, we have 
\[
   \Supp\!\left(\mathcal{F}((1-\mathcal{P}_{R}^{(L)})u)\right)\cap \mathcal{D}(R)=\emptyset,
\]
so that as for \eqref{eq:approx1},
\begin{align*}
    \|(1-\mathcal{P}_{R}^{(L)})u\|_{\cH}
    &= \|\mathcal{F}((1-\mathcal{P}_{R}^{(L)})u)\|_{\cH} \le \frac{1}{R}\left\|(1-\mathcal{P}_{R}^{(L)})\sum_{\ell=1,2}\cL_{I_\ell}u\right\|_{L^2((\R^3)^N)}\le \frac{1}{R}\left\|\sum_{\ell=1,2}\cL_{I_\ell} u\right\|_{L^2((\R^3)^N)}.
\end{align*}
Hence $\mathcal{P}_R^{(L)}$ satisfies Assumption~\ref{ass:1.8}. Applying $\mathcal{P}_R^{(L)}$ to \eqref{hyper-approx} yields \eqref{eq:wavelet-discret}, and thus Theorem~\ref{th:justif} holds.
\end{proof}

\subsection{Sparse grid Gaussian-type orbital approximation and numerical experiments}\label{sec:gaussian}
Recall from \eqref{eq:P_R-GTOs} that the operator $\widetilde{\mathcal{P}}_R$ denotes 
the sparse grid projector based on Gaussian-type orbitals (GTOs). 
The following proposition shows that $\widetilde{\mathcal{P}}_R$ provides a good approximation of $u(t,x)$ in the following sense:
\begin{proposition}[Error estimate for the sparse grid GTO approximation]\label{prop:orbital}
For any $v\in H_{I_1,\rm mix}^1\cap H_{I_2,\rm mix}^1$, the following estimate holds:
\begin{align}
     \|v-\widetilde{\mathcal{P}}_R v\|_{\mathcal{V}_{I_1,I_2}}
     \;\lesssim\;  \frac{1}{R}\Big(\|v\|_{H_{I_1,\rm mix}^1}+\|v\|_{H_{I_2,\rm mix}^1}\Big),
\end{align}
where
\begin{align}\label{eq:V12}
\|v\|_{\mathcal{V}_{I_1,I_2}}
  := \min_{\ell=1,2}\left\| \Big(\prod_{j\in I_\ell}|x_j-a|^{-1}\Big)v\right\|_{\cH}
\end{align}
with $a$ being given in \eqref{eq:partialwave-a}. In particular, under the same assumptions as in Theorem~\ref{th:mixregularity}, the solution $u$ to \eqref{eq:1.1} satisfies
\begin{align}\label{eq:u-PRu-GTOs}
     \sup_{t\in [0,T]} \|(1-\widetilde{\mathcal{P}}_R) u(t)\|_{\mathcal{V}_{I_1,I_2}}
     \;\lesssim\; \frac{1}{R}\Big(\|u_0\|_{H_{I_1,\rm mix}^1}+\|u_0\|_{H_{I_2,\rm mix}^1}\Big).
\end{align}
\end{proposition}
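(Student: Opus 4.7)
The plan is to exploit the partial-wave expansion \eqref{eq:partial-wave} centered at $a$, in which the projector $\widetilde{\mathcal{P}}_R$ acts by truncating angular multi-indices to the sparse-grid set $\widetilde{\mathcal{D}}_R^{\rm SG}$. Writing $v=\sum_{(\pmb l,\pmb m)}v_{\pmb l,\pmb m}\mathbb{Y}_{\pmb l,\pmb m}$ and $w:=v-\widetilde{\mathcal{P}}_R v=\sum_{(\pmb l,\pmb m)\notin\widetilde{\mathcal{D}}_R^{\rm SG}}v_{\pmb l,\pmb m}\mathbb{Y}_{\pmb l,\pmb m}$, both the multiplication operator $\prod_{j\in I_\ell}|x_j-a|^{-1}$ (acting only on radial variables) and the operator $\mathcal{L}_{I_\ell}$ preserve every joint angular subspace $\mathrm{span}\{\mathbb{Y}_{\pmb l,\pmb m}\}$, so all the relevant norms split as orthogonal sums over partial waves. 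This reduces the global estimate to a per-partial-wave analysis.

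The central analytic ingredient is the angular-momentum graded Hardy inequality. For $u=f(r)Y_{l,m}$ on $\R^3$, the direct identities $\|(1-\Delta)^{1/2}u\|^{2}_{L^2(\R^3)}=\int_0^\infty[(f')^{2}r^{2}+l(l+1)f^{2}+f^{2}r^{2}]\,dr$ and $\|r^{-1}u\|^{2}_{L^2(\R^3)}=\int_0^\infty f^{2}\,dr$, combined with the classical three-dimensional Hardy inequality (for $l=0$) or the trivial bound $\int f^{2}\le(l(l+1))^{-1}\int l(l+1)f^{2}$ (for $l\ge1$), yield $\|r^{-1}u\|^{2}\le C(l+1/2)^{-2}\|(1-\Delta)^{1/2}u\|^{2}$. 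Iterating this single-particle estimate over $j\in I_\ell$ (using that $(1-\Delta_j)^{1/2}$ for distinct $j$ commute and commute with multiplication by functions in the other variables) produces the per-partial-wave bound
\[
\Big\|\prod_{j\in I_\ell}|x_j-a|^{-1}(v_{\pmb l,\pmb m}\mathbb{Y}_{\pmb l,\pmb m})\Big\|_{\cH}^{2}\le\frac{C}{a_\ell^{2}}\,\|\mathcal{L}_{I_\ell}(v_{\pmb l,\pmb m}\mathbb{Y}_{\pmb l,\pmb m})\|_{\cH}^{2},\qquad a_\ell:=\prod_{j\in I_\ell}(l_j+1/2).
\]

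The sparse-grid step extracts the factor $R^{-2}$: any $(\pmb l,\pmb m)\notin\widetilde{\mathcal{D}}_R^{\rm SG}$ satisfies $a_1+a_2>R$, hence $\max(a_1,a_2)>R/2$. I would partition the non-sparse-grid indices into the disjoint subsets $B_\ell:=\{a_\ell\ge a_{3-\ell}\}$ (on which $a_\ell>R/2$), write $w=w_{B_1}+w_{B_2}$, and use the Hardy estimate on $B_\ell$ to obtain the "diagonal" control $\|\prod_{j\in I_\ell}|x_j-a|^{-1}w_{B_\ell}\|^{2}\le 4CR^{-2}\|\mathcal{L}_{I_\ell}v\|^{2}$. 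Combining these two diagonal estimates with the outer minimum over $\ell$, and controlling the residual off-diagonal contributions $\|\prod_{j\in I_\ell}|x_j-a|^{-1}w_{B_{3-\ell}}\|$ through the complementary inequality $a_\ell^{2}\|\prod_{j\in I_\ell}r_j^{-1}(\cdot)\|^{2}\lesssim\|\mathcal{L}_{I_\ell}(\cdot)\|^{2}$, yields $\|w\|_{\mathcal{V}_{I_1,I_2}}\lesssim R^{-1}(\|\mathcal{L}_{I_1}v\|+\|\mathcal{L}_{I_2}v\|)$. The evolution statement \eqref{eq:u-PRu-GTOs} then follows by applying this inequality pointwise in $t\in[0,T]$ to $v=u(t)$ and invoking Theorem~\ref{th:mixregularity} to bound $\sup_{t\in[0,T]}\|\mathcal{L}_{I_\ell}u(t)\|_{\cH}\lesssim_p\|u_0\|_{H^1_{I_\ell,\rm mix}}$.

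The main obstacle will be the interchange of the outer minimum $\min_{\ell=1,2}$ with the partial-wave sum: in general $\min(\sum_\alpha X_\alpha,\sum_\alpha Y_\alpha)$ is strictly larger than $\sum_\alpha\min(X_\alpha,Y_\alpha)$, so one cannot simply pick the optimal $\ell$ separately for each $(\pmb l,\pmb m)$. The delicate point is matching the orthogonal block splitting $w=w_{B_1}+w_{B_2}$ to the outer minimum in such a way that the "wrong-$\ell$" off-diagonal pieces are still absorbed by $R^{-2}$ times the mixed-regularity norms. I expect this combinatorial bookkeeping—together with the precise tracking of constants through the iterated Hardy inequality and the verification that $\widetilde{\mathcal{P}}_R$ really acts as plain truncation of the partial-wave coefficients—to constitute the technical heart of the proof.
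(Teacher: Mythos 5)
Your overall strategy---partial-wave decomposition centred at $a$, the angular-momentum-graded Hardy inequality $\||\cdot-a|^{-1}\psi\|_{L^2(\R^3)}\le (l+\tfrac12)^{-1}\|\nabla\psi\|_{L^2(\R^3)}$ for $\psi=f(r)Y_{l,m}$, iteration over $j\in I_\ell$, and extraction of $R^{-1}$ from the sparse-grid condition $a_1+a_2>R$ with $a_\ell:=\prod_{j\in I_\ell}(l_j+1/2)$---is exactly the paper's. The paper's mechanism is the per-partial-wave inequality $(a_1+a_2)\min(c_1,c_2)\le a_1c_1+a_2c_2$, where $c_\ell$ denotes the weighted radial norm of $v_{\pmb l,\pmb m}$ entering $\mathcal{V}_{I_1,I_2}$; combined with $a_1+a_2>R$ off $\widetilde{\mathcal{D}}_R^{\rm SG}$ and the iterated Hardy bound $a_\ell c_\ell\le\|\cL_{I_\ell}(v_{\pmb l,\pmb m}\mathbb{Y}_{\pmb l,\pmb m})\|_{\cH}$, this sums to the right-hand side. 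No splitting into blocks $B_1,B_2$ appears in the paper.

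The step you yourself single out as the ``technical heart'' is, however, a genuine gap in your proposal, and the repair you sketch does not close it. On the off-diagonal block $B_{3-\ell}$ you only know $a_{3-\ell}>R/2$, while $a_\ell$ can be as small as $2^{-|I_\ell|}$ (all angular momenta in $I_\ell$ equal to zero); your ``complementary inequality'' $a_\ell^{2}\|\prod_{j\in I_\ell}|x_j-a|^{-1}(\cdot)\|_{\cH}^{2}\lesssim\|\cL_{I_\ell}(\cdot)\|_{\cH}^{2}$ therefore yields only an $O(1)$ bound on $\|\prod_{j\in I_\ell}|x_j-a|^{-1}w_{B_{3-\ell}}\|_{\cH}$, with no factor $R^{-1}$, so the two blocks cannot be recombined under a single global choice of $\ell$. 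What the partial-wave argument delivers without further input is the estimate in which the minimum over $\ell$ is taken \emph{inside} the sum over $(\pmb l,\pmb m)$ (i.e.\ $\ell$ chosen adaptively per partial wave, via $\min(c_1,c_2)\le R^{-1}(a_1c_1+a_2c_2)$); this is in effect what the paper's displayed chain establishes, since its passage from $\min_\ell\sum_{\pmb l,\pmb m}(\cdots)$ to $\sum_\ell\sum_{\pmb l,\pmb m}(\cdots)$ is precisely the per-partial-wave form of the minimum. If you insist on the statement with the minimum outside the full sum, neither your block decomposition nor the direct interchange suffices, and an additional argument (or a weakening of the left-hand side to the adaptive version) is required. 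Your reduction to the single-particle Hardy inequality, the constants $(l+1/2)^{-2}$, and the final passage to \eqref{eq:u-PRu-GTOs} via Theorem~\ref{th:mixregularity} are all fine.
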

\begin{proof}
The proof relies on the partial-wave decomposition together with an improved Hardy inequality. 
First, recall that under the partial-wave decomposition \eqref{eq:partialwave-a} and \eqref{eq:partial-wave} we have
\begin{align}\label{eq:decom-partialwave}
    \|v(y)\|_{L^2(\R^3)}
    = \sum_{l=0,1,\dots}\;\sum_{m=-l}^{l}
      \|r_y v_{l,m}(r_y)\|_{L^2(\R^+)},\quad  \|v\|_{L^2((\R^3)^N)}
    = \sum_{({\pmb l},{\pmb m})\in \widetilde{\mathcal{D}}}
     \left\|\big(\prod_{j=1}^N r_j\big) v_{{\pmb l},{\pmb m}}\right\|_{L^2((\R^+)^N)}
\end{align}
and for any function of the form $\psi(y)= f(r_y)Y_{l,m}(\Omega_y)$, the improved Hardy inequality gives
\begin{align}\label{eq:hardy-partialwave}
    \|\nabla \psi\|_{L^2(\R^3)}
    \;\ge\; \Big(l+\tfrac{1}{2}\Big)\||\cdot-a|^{-1}\psi\|_{L^2(\R^3)}
    = \Big(l+\tfrac{1}{2}\Big)\|f\|_{L^2(\R^+)} .
\end{align}
Combining \eqref{eq:partial-wave}, \eqref{eq:decom-partialwave}, and \eqref{eq:hardy-partialwave}, we obtain
\begin{align*}
  \|v-\widetilde{\mathcal{P}}_R v\|_{\mathcal{V}_{I_1,I_2}}
  &= \min_{\ell=1,2} \sum_{(\pmb{l},\pmb{m})\notin \widetilde{\cD}_R}
      \Big\|\Big(\prod_{j\in I_\ell^c} r_j \Big) v_{\pmb{l},\pmb{m}}\Big\|_{L^2((\R^+)^N)} \\
  &\le \frac{1}{R}\min_{\ell=1,2}\sum_{(\pmb{l},\pmb{m})\notin \widetilde{\cD}_R}
      \Big\|\Big(\prod_{j\in I_\ell^c} r_j\Big)
      \Big(\sum_{\ell'=1,2}\prod_{k\in I_{\ell'}}(l_k+1/2)\Big)
      v_{\pmb{l},\pmb{m}}\Big\|_{L^2((\R^+)^N)} \\
  &\le \frac{1}{R}\sum_{\ell=1,2}\sum_{(\pmb{l},\pmb{m})\notin \widetilde{\cD}_R}
      \Big\|\Big(\prod_{j\in I_\ell^c} r_j\Big)
      \Big(\prod_{k\in I_\ell}(l_k+1/2)\Big)
      v_{\pmb{l},\pmb{m}}\Big\|_{L^2((\R^+)^N)} \\
  &\le \frac{1}{R}\Big(\|v\|_{H_{I_1,\rm mix}^1}+\|v\|_{H_{I_2,\rm mix}^1}\Big),
\end{align*}
where in the first inequality we used the fact that 
$
1 \;\le\; \frac{1}{R}\sum_{\ell'=1,2}\prod_{j\in I_{\ell'}}(l_j+1/2)$ 
for all $(\pmb{l},\pmb{m})\notin \widetilde{\cD}_R$, 
and the last inequality follows from \eqref{eq:decom-partialwave} and \eqref{eq:hardy-partialwave}.

Finally, under the assumptions of Theorem~\ref{th:mixregularity}, estimate 
\eqref{eq:u-PRu-GTOs} follows from \eqref{eq:u-mix} together with the fact that  
$X_{I_\ell,p,T}^1 \subset L^\infty([0,T],H^1_{I_\ell,\rm mix})$ for $\ell=1,2$.
\end{proof}

{\bf Numerical experiments.} 
We now illustrate the effect of sparse-grid truncation in a time-dependent setting 
for the helium atom (${\rm He}$) and hydrogen molecule (${\rm H}_2$) with $N=2$, 
comparing the sparse grid (SG) Gaussian-type orbital approximation \eqref{eq:GTOs-N-2} 
with a full grid (FG) truncation of the form
\[
     u(t,x)\;\approx\; \sum_{({\pmb l},{\pmb m})\in \widetilde{\mathcal{D}},\;\|{\pmb l}\|_\infty \leq R}\sum_{\pmb n}
     \prod_{j=1}^N\Big(\sum_{\mu_j=1}^M \psi_{1,n_j,l_j,m_j}^{(p)}(t,x_j)\Big).
\]
The one-electron basis is taken as a fixed Gaussian atomic orbital (AO) basis frozen at the $t=0$ geometry: 
AO centers and contraction coefficients remain unchanged during propagation. The electronic wavefunction is propagated in real time with a uniform time step
\[
\Delta t = 0.001, 
\qquad n_{\text{steps}} = 1000, 
\qquad T = n_{\text{steps}} \Delta t = 1.0.
\]
For simplicity, the nuclei are modeled as fixed point charges; that is, their positions remain unchanged during the simulation and only the electronic degrees of freedom are propagated dynamically. This corresponds to the standard Born–Oppenheimer approximation, where the nuclei are assumed to move much more slowly than the electrons. 

For the He atom, the nucleus is fixed at the origin with 65 contracted AOs centered there, and the two electrons are initially placed in Gaussian-type orbitals of the form $\exp(-0.5 r^2)$, one centered at the origin $(0.0)$ and the other slightly shifted to $0.2$ in position (i.e., $\exp(-0.5 (r-0.2)^2)$), both spin-up. For the H$_2$ molecule, the two nuclei are fixed along the $z$-axis at a bond distance of $R=1.4~\text{Bohr}$, with each hydrogen contributing 65 contracted AOs, and the two electrons are initially placed in Gaussian-type orbitals of the form $\exp(-0.5 r^2)$, centered at the two nuclei and both spin-up. In both cases, the resulting CI vector is projected onto the chosen basis set, ensuring that the initial wavefunction has the desired decay properties.

For each truncation type and radius $R$, we measure the error
\[
\mathcal{E}_R^{\mathrm{type}}(t)
=\big\|u^{(R,\mathrm{type})}(t)-u_{\mathrm{full}}(t)\big\|_{H^1((\R^3)^2)},
\]
and 
\[
\Delta E_R^{\mathrm{type}}(t)
= \big|\,E^{(R,\mathrm{type})}(t)-E_{\mathrm{full}}(t)\,\big|
:=\Big|\left<u^{(R,\mathrm{type})}(t),H_N u^{(R,\mathrm{type})}(t) \right>
- \left<u_{\mathrm{full}}(t),H_N(t)u_{\mathrm{full}}(t)\right>\Big|,
\]
where ``type'' denotes either the \emph{sparse grid} (SG) or the \emph{full grid } (FG) truncation and $u_{\mathrm{full}}$ is computed with a full grid using 
$\|\pmb l\|_\infty=7$, corresponding to a determinant-space dimension of 
2080 for He and 8385 for H$_2$. 
Here, the term ``dimension’’ (or ``degrees of freedom’’) refers to the size of the determinant space, that is, the number of Slater determinants constructed from the chosen Gaussian orbitals used to represent the wavefunction $u(t,x)$.

We characterize the largest error for the simulation period $t\in [0,T]$ by
\begin{align*}
   \max_{t\in [0,T]} \mathcal{E}_R^{\mathrm{type}}(t)\qquad \mbox{and}\qquad  \max_{t\in [0,T]}\Delta E_R^{\mathrm{type}}(t),
\end{align*}
and report these quantities in Figure~\ref{fig:sg-vs-fg}. Both errors are plotted as functions of the degrees of freedom to assess how effectively the sparse grid reduces the determinant space while maintaining accuracy. 
\begin{figure}[htbp]
  \centering
  \begin{subfigure}{0.9\linewidth}
  \centering
      \includegraphics[width=0.9\linewidth]{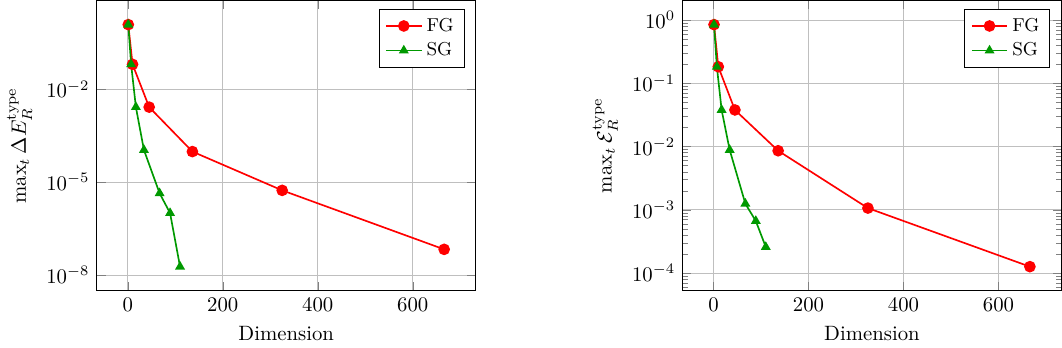}
      \caption{Helium atom (${\rm He}$).}
  \end{subfigure}
   \vspace{1em} 
  \centering
\begin{subfigure}{0.9\linewidth}
  \centering
     \includegraphics[width=0.9\linewidth]{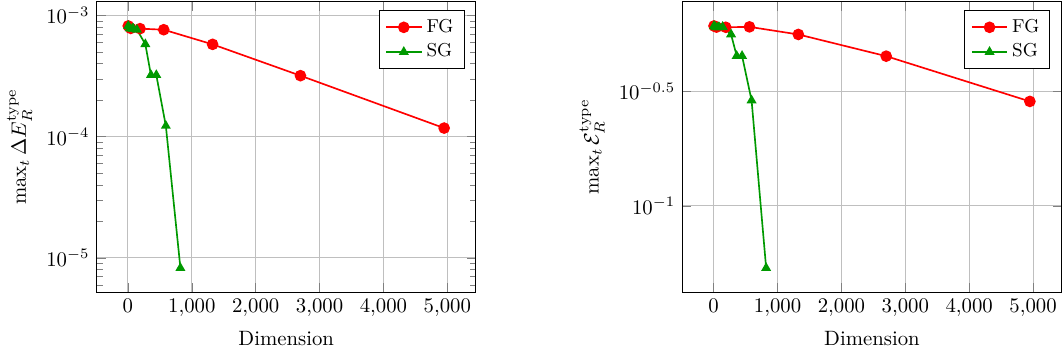}
      \caption{Hydrogen molecule (${\rm H}_2$).}
  \end{subfigure}
\caption{Comparison of sparse-grid (SG) and full-grid (FG) truncations for He (top) and H$_2$ (bottom), 
showing the maximum energy deviation $\max_{t\in [0,T]} \Delta E^{\mathrm{type}}(t)$ 
and projection error $\max_{t\in [0,T]} \mathcal{E}^{\mathrm{type}}(t)$. 
The results demonstrate that SG achieves significantly smaller errors than FG 
for the same dimension, thereby reducing computational cost while maintaining accuracy.}
  \label{fig:sg-vs-fg}
\end{figure}

\section{Preliminaries}\label{sec:3}
This section introduces several fundamental analytic tools, 
such as Strichartz estimates, Hardy inequalities, and Sobolev's inequalities, 
which play a key role in the proofs of the existence, mixed regularity, 
and approximation results established later in the paper. 

\subsection{Strichartz estimates}
To study the $N$-body problem \eqref{eq:1.1} in the functional space $X_{p,T}$,
we employ Strichartz estimates from \cite{Yajima2}.
These estimates are crucial for proving our existence result and also play a central role in the approximation analysis,
particularly in the proofs of Proposition~\ref{th:2.8} and Corollary~\ref{cor:2.5}.

Before going further, we recall the free propagator 
$U_0(t)=\exp\!\left(\tfrac{i}{2}t\sum_{j=1}^N \Delta_j\right)$, and denote the integral operator $S$ and $Q$ respectively by
\begin{align}\label{eq:S}
    Su(t)&:=\int_0^t U_0(t-\tau)u(\tau)\,d\tau,\\[4pt]
\label{eq:Q}
    Qu(t)&:=\sum_{j=1}^N S\!\Big(V(\cdot,x_j)u(\cdot,x)\Big)(t)
         +\sum_{1\leq j<k\leq N} S\!\Big(W(x_j,x_k)u(\cdot,x)\Big)(t).
\end{align}
With this notation, Duhamel's formula shows that solutions $u$ of \eqref{eq:1.1} satisfies
\begin{equation}\label{duhamel}
    u(t) = U_0(t)u_0 - iQu(t).
\end{equation}

To establish the standard Strichartz estimate (Lemma~\ref{lem:2.4}), 
we first prove a dispersive estimate (Lemma~\ref{lem:dispersive}). 
Note that both results can be found in \cite[Lemmas~2.2 and~2.3]{Yajima2}, 
but here our versions are global-in-time. 
This is because in \cite{Yajima2}, an additional time-dependent magnetic potential $A(t,x)$ is added, which complicates the arguments.

\begin{lemma}[Dispersive estimate]\label{lem:dispersive}
Let $D\subset \{1,\cdots,N\}$ with $1\leq|D|\leq2$. Then
\[
\|U_0(t)g\|_{L^{\infty,2}_D}\lesssim |t|^{-3/2}\|g\|_{L^{1,2}_D}.
\]
\end{lemma}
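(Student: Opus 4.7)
The plan is to reduce the $N$-particle dispersive estimate to the classical three-dimensional bound $\|e^{it\Delta/2}f\|_{L^\infty(\R^3)} \lesssim |t|^{-3/2}\|f\|_{L^1(\R^3)}$ by exploiting the tensor-product structure $U_0(t) = \prod_{i=1}^N e^{it\Delta_i/2}$. In both cases $|D|=1$ and $|D|=2$, all one-particle propagators acting on variables carrying an $L^2$ norm are unitary on that $L^2$ factor and can therefore be discarded; only the propagator on the single three-dimensional variable that carries the $L^\infty$ norm contributes the decay in $|t|$. I would split the argument according to $|D|$.

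For $D=\{j\}$, I would isolate the $x_j$-factor: $U_0(t) = e^{it\Delta_j/2}\,U_0^{(j)}(t)$, where $U_0^{(j)}(t) := \prod_{i\neq j} e^{it\Delta_i/2}$ is unitary on $L^2((\R^3)^{N-1})$. Writing the $x_j$-propagator as convolution in $x_j$ against the Schr\"odinger kernel $K_t(y) = (2\pi it)^{-3/2}\exp(i|y|^2/(2t))$, which satisfies $|K_t(y)| = (2\pi|t|)^{-3/2}$, I would apply Minkowski's integral inequality to pull the $L^2$ norm in the remaining $N-1$ variables inside the $x_j$-convolution, then use the unitarity of $U_0^{(j)}(t)$ on $L^2((\R^3)^{N-1})$ to eliminate it. Taking the supremum in $x_j$ then yields the required estimate.

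For $D=\{j,k\}$, I would first apply the change of variables encoded by $\mathcal{R}_{j,k}$. A direct computation gives $\Delta_j+\Delta_k = \tfrac{1}{2}(\Delta_{r_{jk}}+\Delta_{R_{jk}})$, so after conjugation the propagator factorizes as the commuting product of $e^{it\Delta_{r_{jk}}/4}$, $e^{it\Delta_{R_{jk}}/4}$, and $\prod_{i\neq j,k} e^{it\Delta_i/2}$, the last two of which are unitary in their respective $L^2$ factors. Using \eqref{Lp2-rjk} to rewrite the $L^{\infty,2}_{\{j,k\}}$ and $L^{1,2}_{\{j,k\}}$ norms as norms in $r_{jk}$ with $L^2$ in all remaining variables (including $R_{jk}$), the Minkowski-plus-unitarity argument of the first case applies verbatim, now with the kernel of $e^{it\Delta_{r_{jk}}/4}$ in place of the previous one; the different scaling constant only affects the implicit prefactor and is absorbed by $\lesssim$.

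The main technical care is in the second case, where one must verify that $\mathcal{R}_{j,k}$ correctly intertwines $U_0(t)$ with the tensor product of commuting propagators in $r_{jk}$, $R_{jk}$ and the remaining $x_i$, and that the Minkowski step applies uniformly over $R_{jk}$ and the other electron variables before the supremum in $r_{jk}$ is taken. Beyond that, no ingredient is needed besides the scalar Schr\"odinger kernel bound, so the estimate follows immediately.
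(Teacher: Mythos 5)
Your proposal is correct and follows essentially the same route as the paper: reduce to the single three-dimensional variable carrying the $L^p$ norm by unitarity of the remaining propagators on the $L^2$ factor, and handle $|D|=2$ via the change of variables $\mathcal{R}_{j,k}$ together with the identity $\Delta_j+\Delta_k=\tfrac12(\Delta_{r_{j,k}}+\Delta_{R_{j,k}})$. The only difference is that you derive the vector-valued one-particle dispersive bound explicitly from the Schr\"odinger kernel and Minkowski's inequality, whereas the paper simply cites the standard estimate.
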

\begin{proof}
For the case $D=\{j\}$ (i.e., $|D|=1$), the claim reduces to the standard dispersive estimate 
(see, e.g., \cite[Eq.~(2.22)]{tao}):
\begin{align*}
    \|U_0(t)g\|_{L^{\infty,2}_D}=\|e^{\frac{1}{2} it\sum_{k=1}^N \Delta_k}g\|_{L^{\infty,2}_j}= \|e^{\frac{1}{2}it \Delta_{j}}g\|_{L^{\infty,2}_D}\lesssim |t|^{-3/2}\|g\|_{L^{1,2}_D}.
\end{align*}
For the case $D=\{j,k\}$ (i.e., $|D|=2$), we use the change of variables 
$r_{j,k}=\tfrac{1}{2}(x_j-x_k)$ and $R_{j,k}=\tfrac{1}{2}(x_j+x_k)$.
Note that 
\begin{align}\label{eq:Rjk-nabla}
    \mathcal{R}_{j,k} \nabla_j= \frac{1}{2}(\nabla_{r_{j,k}}+\nabla_{R_{j,k}})\mathcal{R}_{j,k},\qquad \mathcal{R}_{j,k} \nabla_k= \frac{1}{2}(\nabla_{R_{j,k}}-\nabla_{r_{j,k}})\mathcal{R}_{j,k}
\end{align}
and
\begin{align}\label{eq:Rjk-delta}
    \mathcal{R}_{j,k}\Delta_j+\mathcal{R}_{j,k}\Delta_k=\frac12\Delta_{r_{j,k}}\mathcal{R}_{j,k}+\frac12\Delta_{R_{j,k}}\mathcal{R}_{j,k}.
\end{align}
Thus, 
\[
\mathcal{R}_{j,k}U_0(t)u=\widetilde{U}_0(t)\mathcal{R}_{j,k}u, \quad \text{with}\quad \widetilde{U}_0(t)=\exp\!\Big(\tfrac{i}{2}\Big(\sum_{m\neq j,k}\Delta_m
+\tfrac12\Delta_{r_{j,k}}+\tfrac12\Delta_{R_{j,k}}\Big)\Big).
\]
Therefore, 
\[
\begin{aligned}
\|U_0(t)g\|_{L^{\infty,2}_{j,k}}&=\|\mathcal{R}_{j,k}U_0(t)g\|_{L^{\infty,2}_{r_{j,k}}}=\|\widetilde{U}_0(t)\mathcal{R}_{j,k}g\|_{L^{\infty,2}_{r_{j,k}}}\lesssim |t|^{-3/2}\|\mathcal{R}_{j,k}g\|_{L^{1,2}_{r_{j,k}}}\leq |t|^{-3/2}\|g\|_{L^{1,2}_{j,k}}.
\end{aligned}
\]
Hence the lemma.
\end{proof}

Using the above dispersive estimates, we obtain the following Strichartz estimates. 
\begin{lemma}[Strichartz estimates]\label{lem:2.4}
For $D,D'\subset\{1,\cdots,N\}$ with $1\leq |D|,|D'|\leq 2$ and $2\leq p,\widetilde{p}\leq 6$, we have
\begin{subequations}\label{eq:optim}
\begin{align}
&\|U_0(t)g\|_{L_t^{\theta_p}(\R,L^{p,2}_{D})}\lesssim_p \|g\|_{\cH},\label{eq:2.2a}\\
&\Big\|\int_{\R} U_0(s)^*u(s)\,ds\Big\|_{\cH}\lesssim_{\widetilde{p}} \|u\|_{L_t^{\theta_{\widetilde{p}}'}(\R,L^{\widetilde{p}',2}_{D'})},\label{eq:2.2b}\\
&\|Su\|_{L_t^{\theta_p}(\R,L^{p,2}_{D})}\lesssim_{p,\widetilde{p}} \|u\|_{L_t^{\theta_{\widetilde{p}}'}(\R,L^{\widetilde{p}',2}_{D'})}.\label{eq:2.2c}
\end{align}
\end{subequations}
\end{lemma}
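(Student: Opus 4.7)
The strategy is to apply the standard $TT^{*}$/Keel--Tao abstract Strichartz framework on top of the dispersive bound already established in Lemma~\ref{lem:dispersive}. Throughout, it is convenient to identify $L^{p,2}_{D}$ (after the isometric change of variables $\mathcal{R}_{j,k}$ when $|D|=2$) with the Bochner space $L^{p}(\R^{3},L^{2}((\R^{3})^{N-1}))$, so that vector-valued Riesz--Thorin interpolation applies. As a first step, I would interpolate Lemma~\ref{lem:dispersive} with the $L^{2}$-conservation identity $\|U_{0}(t)g\|_{L^{2,2}_{D}}=\|g\|_{\cH}$ to obtain, for every $2\le p\le \infty$,
\[
\|U_{0}(t)g\|_{L^{p,2}_{D}}\;\lesssim\;|t|^{-3(1/2-1/p)}\,\|g\|_{L^{p',2}_{D}}.
\]
Note that the decay exponent is precisely $2/\theta_{p}$, which is exactly the rate that Hardy--Littlewood--Sobolev in the time variable will be able to absorb.

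Next I would derive the dual homogeneous bound~\eqref{eq:2.2b} via the $TT^{*}$ identity
\[
\Big\|\int_{\R} U_{0}(s)^{*}u(s)\,ds\Big\|_{\cH}^{2}
=\int_{\R}\!\!\int_{\R}\big\langle U_{0}(t-s)u(s),\,u(t)\big\rangle_{\cH}\,ds\,dt,
\]
bounding the integrand pointwise in $(t,s)$ by Hölder pairing in $L^{\widetilde{p},2}_{D'}$--$L^{\widetilde{p}',2}_{D'}$ together with the interpolated dispersive estimate above, and then closing via Hardy--Littlewood--Sobolev in $(t,s)$; the symmetric HLS exponent works out to $\theta_{\widetilde{p}}'$, which is exactly what is needed. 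Estimate~\eqref{eq:2.2a} follows by duality: for any $F\in L^{\theta_{p}'}(\R,L^{p',2}_{D})$ one writes $\int_{\R}\langle U_{0}(t)g,F(t)\rangle\,dt=\big\langle g,\int_{\R} U_{0}(t)^{*}F(t)\,dt\big\rangle$ and applies~\eqref{eq:2.2b}.

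For the inhomogeneous estimate~\eqref{eq:2.2c} with possibly different $D$ and $D'$, I would factor $Su(t)=U_{0}(t)\int_{0}^{t}U_{0}(s)^{*}u(s)\,ds$ and combine \eqref{eq:2.2a} with \eqref{eq:2.2b} to obtain the untruncated analogue (with $\int_{\R}$ in place of $\int_{0}^{t}$). The restriction to the retarded integral is then recovered by the Christ--Kiselev lemma, whose strict inequality hypothesis $\theta_{p}>\theta_{\widetilde{p}}'$ is satisfied in the entire range $2\le p,\widetilde{p}\le 6$ away from the joint endpoint.

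The main obstacle is twofold. First, for $|D|=2$ the space $L^{p,2}_{D}$ is defined only after the non-trivial change of variables $\mathcal{R}_{j,k}$; one must re-express $U_{0}(t)$ in the $(r_{j,k},R_{j,k})$ coordinates via identities~\eqref{eq:Rjk-nabla}--\eqref{eq:Rjk-delta}, so that dispersion is read off in the $r_{j,k}$ variable while unitarity persists in all the other variables, and then verify that vector-valued Riesz--Thorin and $TT^{*}$ still apply in this Hilbert-valued Bochner setting. Second, the endpoint case $p=\widetilde{p}=6$ (where $\theta_{p}=2$) is genuinely delicate because both HLS and Christ--Kiselev become logarithmically borderline; it has to be treated by the full Keel--Tao endpoint argument (dyadic time decomposition plus bilinear interpolation), which however extends verbatim to the present Bochner setting and therefore produces the claimed constants uniformly in $p,\widetilde{p}\in[2,6]$.
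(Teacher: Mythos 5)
Your proposal is correct and is essentially the argument the paper intends: the paper establishes the dispersive bound in Lemma~\ref{lem:dispersive} and then simply invokes the abstract Keel--Tao machinery (``One can easily obtain these estimates by using \cite{Keel}''), which is precisely the interpolation/$TT^{*}$/Christ--Kiselev scheme you spell out, including the need for the endpoint treatment when $p=\widetilde{p}=6$. Your write-up just makes explicit the vector-valued ($L^{2}$-Bochner) bookkeeping and the $\mathcal{R}_{j,k}$ change of variables that the paper leaves implicit.
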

These are the standard Strichartz estimates. One can easily obtain these estimates by using \cite{Keel}.

\subsection{Hardy inequalities and Sobolev's inequalities}\label{sec:2}
To study mixed regularity, we need some Hardy-type inequalities established in 
\cite[Lemma~3.7 and Corollary~3.8]{meng}.
\begin{lemma}\cite[Lemma 3.7]{meng}\label{Hardy1}
Let $a\in \R^3$ and define
\[
Y_{s}(\R^3):=\Big\{f(y)\in L^2(\R^3)\;:\; |y-a|^{2s-2}\nabla_y f \in L^2(\R^3)\Big\}.
\]
Then, for $s\in[1,3/2)$ and $f\in Y_{s}(\R^3\times \R^3)$, we have
\[
\left\|\frac{f}{|\,\cdot-a|^{s}}\right\|_{L^2(\R^3)}
   \;\leq\; \frac{2}{|2s-3|}\left\|\frac{\nabla_y f}{|\,\cdot-a|^{s-1}}\right\|_{L^2(\R^3)}.
\]
\end{lemma}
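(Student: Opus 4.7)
The plan is to prove this inequality via the classical divergence-identity / integration-by-parts strategy that underlies weighted Hardy inequalities. First, translating $y \mapsto y-a$, I may assume $a=0$ throughout. I would then first establish the inequality for $f \in C_c^\infty(\mathbb{R}^3 \setminus \{0\})$, where boundary terms vanish and integrability issues disappear, and recover the general case $f \in Y_s(\mathbb{R}^3)$ by a cutoff/limit argument at the end.

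The crucial identity, obtained by a direct computation on $\mathbb{R}^3 \setminus \{0\}$, is
\[
\nabla \cdot \frac{y}{|y|^{2s}} \;=\; \frac{3}{|y|^{2s}} - \frac{2s\,|y|^2}{|y|^{2s+2}} \;=\; \frac{3-2s}{|y|^{2s}}.
\]
Multiplying by $|f|^2$, integrating over $\mathbb{R}^3$, and integrating by parts (no boundary contribution since $f$ is supported away from $0$ and from $\infty$), I obtain
\[
(3-2s)\int_{\mathbb{R}^3}\frac{|f|^2}{|y|^{2s}}\,dy \;=\; -2\,\mathrm{Re}\int_{\mathbb{R}^3}\bar{f}\,\frac{y\cdot \nabla f}{|y|^{2s}}\,dy.
\]
Splitting the integrand on the right as $(|f|/|y|^s)\cdot(|\nabla f|/|y|^{s-1})$ and applying the Cauchy--Schwarz inequality yields
\[
|2s-3|\int \frac{|f|^2}{|y|^{2s}}\,dy \;\le\; 2\left(\int \frac{|f|^2}{|y|^{2s}}\,dy\right)^{1/2}\left(\int \frac{|\nabla f|^2}{|y|^{2s-2}}\,dy\right)^{1/2},
\]
and dividing by the first factor on the right (assumed finite; otherwise the inequality is trivial) gives the stated bound with the sharp constant $2/|2s-3|$.

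The main obstacle is the passage from $C_c^\infty(\mathbb{R}^3 \setminus \{0\})$ to the class $Y_s(\mathbb{R}^3)$ when $s \in [1,3/2)$. Because $|y|^{-2s}$ is strongly non-integrable near the origin, a naive mollification of $f$ is not admissible. The standard remedy is a radial cutoff $\eta_\varepsilon(y)=\eta(|y|/\varepsilon)$ vanishing on $|y|\le\varepsilon$; applying the inequality to $\eta_\varepsilon f$ and passing to the limit $\varepsilon\to 0$ requires showing that the commutator terms generated by $\nabla\eta_\varepsilon$ contribute negligibly. These are supported on a shell of volume $O(\varepsilon^3)$, and using $s-1\ge 0$ together with the hypothesis controlling $\nabla f/|y|^{s-1}$ they vanish in the limit. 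Alternatively, one can regularize the weight itself by replacing $|y|^{-2s}$ with $(|y|^2+\delta^2)^{-s}$ in the divergence identity, run the same integration by parts (now globally valid), and conclude by monotone convergence as $\delta\to 0$. Either way, the blow-up of the constant as $s\nearrow 3/2$ faithfully reflects the breakdown of the inequality at the critical exponent.
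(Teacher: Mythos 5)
The paper does not prove this lemma at all: it is imported verbatim as \cite[Lemma 3.7]{meng}, so there is no in-text argument to compare against. Your proof is the standard divergence/integration-by-parts derivation of the weighted Hardy inequality, and it is correct: the identity $\nabla\cdot\big(y|y|^{-2s}\big)=(3-2s)|y|^{-2s}$, the integration by parts against $|f|^2$, the pointwise bound $|y\cdot\nabla f|\le |y|\,|\nabla f|$, and Cauchy--Schwarz give exactly the constant $2/|2s-3|$ on the range $s\in[1,3/2)$ where $3-2s>0$. One caveat on the density step: your first variant (shell cutoff $\eta_\varepsilon$) is not quite as automatic as stated, since controlling the commutator term requires $\varepsilon^{-2s}\int_{\varepsilon\le|y|\le 2\varepsilon}|f|^2\,dy\to 0$, which does not follow directly from $f\in L^2$ and $\nabla f/|y|^{s-1}\in L^2$ without an additional argument. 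Your second variant --- replacing the weight by $(|y|^2+\delta^2)^{-s}$, noting $\nabla\cdot\big(y(|y|^2+\delta^2)^{-s}\big)\ge(3-2s)(|y|^2+\delta^2)^{-s}$ and $|y|^2(|y|^2+\delta^2)^{-s}\le|y|^{2-2s}$, and letting $\delta\to 0$ by monotone convergence --- is clean and closes the argument; I would make that the primary route and drop or carefully justify the shell-cutoff alternative.
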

and
\begin{lemma}\cite[Corollary~3.8]{meng}\label{Hardy}
Define the functional space $Y_{\mathrm{anti},s}(\R^3\times \R^3)$ by
\[
Y_{\mathrm{anti},s}(\R^3\times \R^3)
:=\Big\{f\in L^2(\R^3\times \R^3)\;:\; f(y,z)=-f(z,y),\;\;
|y-z|^{\,s-2}\,\nabla_y\otimes \nabla_z f \in L^2(\R^3\times \R^3)\Big\}.
\]
Then for $s\in [2,5/2)$ and $f\in Y_{\mathrm{anti},s}(\R^3\times \R^3)$, we have
\[
\left\|\frac{f}{|y-z|^{s}}\right\|_{L^2(\R^3\times \R^3)}
\;\leq\; \frac{4}{|2s-5|\,|2s-3|}
\left\|\frac{\nabla_y \otimes \nabla_z f}{|y-z|^{s-2}}\right\|_{L^2(\R^3\times \R^3)}.
\]
\end{lemma}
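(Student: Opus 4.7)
The plan is to apply Lemma~\ref{Hardy1} twice, once in each of the two variables, using the antisymmetry of $f$ to extend the admissible range of exponents in one of the applications. The observation that motivates this strategy is that the two factors $\tfrac{2}{|2s-3|}$ and $\tfrac{2}{|2s-5|}$ in the claimed constant are precisely the one-variable Hardy constants of Lemma~\ref{Hardy1} at exponents $s$ and $s-1$ respectively; since $s-1\in[1,3/2)$ falls inside the standard Hardy range while $s\in[2,5/2)$ does not, one of the two applications will require an improvement based on the antisymmetry hypothesis.

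First, for each fixed $z\in\R^3$, I would apply Lemma~\ref{Hardy1} to the function $y\mapsto \nabla_z f(y,z)$ with reference point $a=z$ and exponent $s-1\in[1,3/2)$. Squaring, integrating in $z$, and invoking Fubini yields
\[
\Big\|\tfrac{\nabla_z f}{|y-z|^{s-1}}\Big\|_{L^2(\R^3\times\R^3)}
\le \tfrac{2}{|2s-5|}\,\Big\|\tfrac{\nabla_y\otimes\nabla_z f}{|y-z|^{s-2}}\Big\|_{L^2(\R^3\times\R^3)}.
\]
Second, for each fixed $y\in\R^3$, I would apply a Hardy-type inequality to $z\mapsto f(y,z)$ at the exponent $s\in[2,5/2)$, which lies outside the stated range of Lemma~\ref{Hardy1}. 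Revisiting the integration-by-parts derivation---that is, testing $|f|^2/|z-y|^{2s}$ against the divergence identity $\nabla_z\!\cdot\!\bigl((z-y)|z-y|^{-2s}\bigr)=(3-2s)|z-y|^{-2s}$ and applying Cauchy--Schwarz---reproduces the constant $\tfrac{2}{|2s-3|}$, \emph{provided} the boundary term on a small sphere around the diagonal $z=y$ vanishes. Antisymmetry forces $f(y,y)=0$, so that heuristically $|f|^2=O(|y-z|^2)$ near the diagonal; the boundary contribution on the sphere of radius $\varepsilon$ is then of order $\varepsilon^{\,5-2s}$, which tends to zero precisely because $s<5/2$. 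Combining this with the first step gives the claim with constant $\tfrac{4}{|2s-3||2s-5|}$.

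The main obstacle is the rigorous execution of the extended Hardy step for $s\in[2,5/2)$. The integration-by-parts computation sketched above is formal and must be carried out on a dense subclass of $Y_{\mathrm{anti},s}(\R^3\times\R^3)$---for instance, smooth antisymmetric functions with support disjoint from the diagonal---and then extended to the full space by approximation. One must simultaneously verify that antisymmetry (and hence the first-order vanishing on $\{y=z\}$) is preserved in the limit and that the weighted right-hand side controls the weighted left-hand side uniformly along the approximating sequence. Once this density argument is in place, the two-step scheme above goes through and delivers the claimed constant.
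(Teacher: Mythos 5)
The paper does not actually prove this lemma: it is imported verbatim from \cite[Corollary~3.8]{meng}, so there is no in-text argument to compare against. Judged on its own terms, your strategy is the natural one and is almost certainly the route taken in the cited reference: the constant $\tfrac{4}{|2s-5||2s-3|}=\tfrac{2}{|2(s-1)-3|}\cdot\tfrac{2}{|2s-3|}$ does factor exactly as two one-variable Hardy constants, your first step (Lemma~\ref{Hardy1} in $y$ at exponent $s-1\in[1,3/2)$ applied to $y\mapsto\nabla_z f(y,z)$, then squaring and integrating in $z$) is a legitimate in-range application, and your identification of antisymmetry-induced vanishing on the diagonal as the mechanism that pushes the admissible range of the second application from $[1,3/2)$ up to $[2,5/2)$ is the correct key idea; the exponent count $\varepsilon^{5-2s}$ for the boundary term is also right.

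The one genuine gap is the rigorous execution of that second step, which you flag but do not close. The assertion ``$f(y,y)=0$, hence $|f|^2=O(|y-z|^2)$ near the diagonal'' is not meaningful for a general element of $Y_{\mathrm{anti},s}$: for fixed $y$ the slice $z\mapsto f(y,z)$ is at best in a weighted $H^1(\R^3_z)$, and point evaluation at $z=y$ is not defined there, so the vanishing must be extracted from the \emph{joint} structure (e.g.\ $2f(y,z)=f(y,z)-f(z,y)=\int_0^1\frac{d}{dt}f(z+t(y-z),\,y+t(z-y))\,dt$, which bounds $|f(y,z)|$ by $|y-z|$ times an average of first derivatives along the segment) or from an approximation by smooth antisymmetric functions supported off the diagonal. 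In the latter case the nontrivial point is density in the \emph{weighted} graph norm of $Y_{\mathrm{anti},s}$ --- density in $H^1(\R^6)$ alone (which holds for the codimension-$3$ diagonal regardless of antisymmetry) is not enough, since the weighted left-hand side need not pass to the limit along an $H^1$-approximating sequence. Until one of these two routes is carried out, the constant $\tfrac{2}{|2s-3|}$ in the out-of-range Hardy step rests on a formal computation; with that step supplied, your two-step scheme does yield the stated inequality.
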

We also need some Sobolev-type inequalities on the functional spaces $L^{p,2}_D$.  
To this end, we recall the Mikhlin multiplier theorem in the setting of $L^{p,2}_j$.  
Let $a:\R^n\to\C$ be a bounded measurable function.  
It defines a bounded linear operator $T_a:L^2(\mathbb{R}^n,\mathbb{C})\rightarrow L^2(\mathbb{R}^n,\mathbb{C})$ acting on $u\in L^2(\R^n\times\R^m,\mathbb{C})$ by
\[
T_a u:=\widecheck{a\widehat{u}}
\]
Here $\widehat{u}(\xi_1,y_2):=\int_{\R^n}e^{-2\pi i y_1\cdot \xi_1}u(y_1,y_2)dy_1$ is the Fourier transform for $x\in \R^n$ and $\widecheck{u}(y_1,y_2)$ is the inverse.
\begin{theorem}[Mikhlin multiplier]\label{th:a.2}
Let $m,n\in \mathbb{N}_0$.  
Suppose $a:\R^n\setminus\{0\}\to\C$ is a $C^{n+2}$ function satisfying
\[
|\partial^\alpha a(\xi)|\leq C|\xi|^{-\alpha}
\]
for every $\xi\in\mathbb{R}^n\setminus\{0\}$ and every multi-index $\alpha=(\alpha_1,\cdots,\alpha_n)\in\mathbb{N}_0^n$ with $|\alpha|\leq n+2$. 
Then for any $1<p<\infty$,
\[
   \|T_a f\|_{L^p(\R^n,L^2(\R^m))} \;\lesssim_n\; \|f\|_{L^p(\R^n,L^2(\R^m))}.
\]
\end{theorem}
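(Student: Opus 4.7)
The plan is to treat this as the classical Mikhlin multiplier theorem lifted to the Hilbert-space-valued setting, following the standard Calderón--Zygmund strategy. Viewing functions as $L^2(\R^m_{y_2})$-valued functions of $y_1\in\R^n$, the operator $T_a$ acts by convolution in $y_1$ with the kernel $K=\check{a}$, understood as scalar multiplication on each fiber $L^2(\R^m)$.

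First I would establish $L^2$-boundedness. Taking $\alpha=0$ in the hypothesis gives $|a(\xi_1)|\le C$, and Plancherel's identity in $\xi_1$ combined with Fubini yields
\[
\|T_a f\|_{L^2(\R^n,L^2(\R^m))}=\|a\widehat{f}\|_{L^2(\R^n,L^2(\R^m))}\le C\|f\|_{L^2(\R^n,L^2(\R^m))}.
\]
Next, using a Littlewood--Paley decomposition $a=\sum_{j\in\Z}a_j$ with each $a_j$ supported in a dyadic annulus $\{|\xi_1|\sim 2^j\}$, and integrating by parts repeatedly against $e^{2\pi i y_1\cdot\xi_1}$ using the Mikhlin bounds $|\partial^\alpha a(\xi_1)|\le C|\xi_1|^{-|\alpha|}$ for $|\alpha|\le n+2$, I would derive the pointwise kernel estimates
\[
|K(y_1)|\lesssim_n |y_1|^{-n},\qquad |\nabla_{y_1} K(y_1)|\lesssim_n |y_1|^{-n-1},\qquad y_1\neq 0,
\]
which immediately imply the Hörmander condition
\[
\int_{|y_1|>2|z_1|}|K(y_1-z_1)-K(y_1)|\,dy_1\le C.
\]

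Finally, since $K$ acts fiberwise as scalar multiplication on $L^2(\R^m)$, its operator norm on the fiber coincides with $|K|$, so the scalar kernel bounds above are also the operator-valued bounds. Combining the $L^2(\R^n,L^2(\R^m))$-boundedness of $T_a$ with the operator-valued Hörmander condition, the Benedek--Calderón--Panzone vector-valued Calderón--Zygmund theorem extends $T_a$ to a bounded operator on $L^p(\R^n,L^2(\R^m))$ for every $1<p<\infty$, with constant depending only on $n$ and $p$. By duality and interpolation, both cases $1<p\le 2$ and $2\le p<\infty$ are covered.

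The main technical obstacle is the pointwise kernel estimate, and the condition $|\alpha|\le n+2$ is tailored precisely for it. For fixed $y_1\neq 0$, one splits the dyadic sum at $j_0\sim -\log_2|y_1|$: the ``low-frequency'' terms $j\le j_0$ are controlled by the trivial bound $|\check{a_j}(y_1)|\lesssim 2^{jn}$, while the ``high-frequency'' terms $j\ge j_0$ are controlled by integration by parts to obtain $|\check{a_j}(y_1)|\lesssim 2^{jn}(2^j|y_1|)^{-(n+2)}$; summing both contributions produces the $|y_1|^{-n}$ bound, with the extra two derivatives providing exactly the decay needed for absolute convergence. The gradient estimate is identical with one extra factor of $\xi_1$ inside the inverse Fourier transform, absorbed into one additional derivative.
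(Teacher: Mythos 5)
Your proposal is correct and follows essentially the same route as the paper, which simply cites the standard proof of the Mikhlin multiplier theorem and notes that the scalar Calder\'on--Zygmund inequality must be replaced by its Hilbert-space-valued (Benedek--Calder\'on--Panzone) version; your write-up fills in exactly those steps (Plancherel on the fibers for $L^2$, dyadic kernel estimates from the $|\alpha|\le n+2$ derivative bounds, the H\"ormander condition, and the vector-valued Calder\'on--Zygmund theorem). The only cosmetic point is that the exponent in the hypothesis should be read as $|\xi|^{-|\alpha|}$, which you have done.
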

The proof of Theorem \ref{th:a.2} is exactly the same as the standard Mikhlin multiplier theorem (see, e.g., \cite[Theorem 8.2]{schlag}). The only difference is that we use the Calder\'on-Zygmund inequality in \cite[Theorem 2.1.9]{singular} instead of the normal one.

We now state Sobolev inequalities associated with the functional spaces $L^{p,2}_{j}$ and $L^{p,2}_{j,k}$.
\begin{theorem}\label{th:2.7}
Let $1<p<\infty$ and $D\subset\{1,\dots,N\}$ with $1\le |D|\le 2$. Then we have 
\begin{subequations}
\begin{align}
   \|\nabla_j g\|_{L^{p,2}_D} &\;\lesssim_p\; \|(1-\Delta_j)^{1/2}g\|_{L^{p,2}_D}, \label{eq:2.6a}\\
   \|g\|_{L^{p,2}_D} &\;\lesssim_p\; \|(1-\Delta_j)^{1/2}g\|_{L^{p,2}_D}. \label{eq:2.6b}
\end{align}
\end{subequations}
\end{theorem}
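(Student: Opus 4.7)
The plan is to derive both estimates from the vector-valued Mikhlin multiplier theorem (Theorem~\ref{th:a.2}) applied to the Bessel-type symbols
\[
m_0(\xi):=(1+4\pi^2|\xi|^2)^{-1/2},\qquad m_1(\xi):=2\pi i\xi\,(1+4\pi^2|\xi|^2)^{-1/2},
\]
each of which satisfies $|\partial^\alpha m_i(\xi)|\lesssim_\alpha |\xi|^{-|\alpha|}$ componentwise and is therefore an admissible Mikhlin symbol. The delicate point is to align the Fourier variable $\xi_j$, on which these symbols act, with the \emph{$L^p$-variable} built into the norm $L^{p,2}_D$. I split into three cases.

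If $j\notin D$, then $x_j$ lies entirely in the $L^2$-block, so Plancherel in $x_j$ reduces both inequalities to multiplication by the bounded symbol $m_i(\xi_j)$, which is trivial. If $|D|=1$ with $D=\{j\}$, then $x_j$ is precisely the $L^p$-variable and the remaining $N-1$ coordinates form the $L^2$-block, so Theorem~\ref{th:a.2} applies directly with $n=3$, $m=3(N-1)$, and $a=m_0$ or $m_1$.

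The nontrivial case is $|D|=2$ with $j\in D$, say $D=\{j,k\}$. Here the $L^p$-direction is the relative coordinate $r_{j,k}$, while the center-of-mass $R_{j,k}$ belongs to the $L^2$-block. Writing $\tilde g=\mathcal{R}_{j,k}g$ and invoking \eqref{eq:Rjk-nabla}, I have $\nabla_j\tilde g=\tfrac12(\nabla_r+\nabla_R)\tilde g$; after partial Fourier transform in $R_{j,k}$, this becomes $\tfrac12(\nabla_r+i\eta_R)h$ acting on $h:=\mathcal{F}_{R_{j,k}}\tilde g$, and the would-be Mikhlin multiplier in $\eta_r$ depends on the parameter $\eta_R$. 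To remove this dependence I introduce the unitary gauge transformation
\[
\psi(r,\eta_R,\cdot):=e^{i\eta_R\cdot r}\,h(r,\eta_R,\cdot),
\]
which preserves the $L^p_r(L^2_{\eta_R,\mathrm{rest}})$-norm since $|\psi|=|h|$. A direct computation gives $(\nabla_r+i\eta_R)h=e^{-i\eta_R\cdot r}\nabla_r\psi$, and iterating once yields $(\nabla_r+i\eta_R)^2h=e^{-i\eta_R\cdot r}\Delta_r\psi$. Hence the gauge intertwines $\nabla_j\leftrightarrow\tfrac12\nabla_r$ and, by functional calculus, $(1-\Delta_j)^{1/2}\leftrightarrow(1-\tfrac14\Delta_r)^{1/2}$. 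The inequalities thereby reduce to the scalar Bessel-potential estimates in the single block $r$, to which Theorem~\ref{th:a.2} applies with the Mikhlin symbols $\tfrac{i\eta_r/2}{(1+|\eta_r|^2/4)^{1/2}}$ and $(1+|\eta_r|^2/4)^{-1/2}$.

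The principal obstacle is precisely the partial ellipticity of $\Delta_j$ in the split variables $(r_{j,k},R_{j,k})$: its symbol $\tfrac14|\eta_r+\eta_R|^2$ degenerates on the hyperplane $\eta_R=-\eta_r$, so that the naive Mikhlin estimate in $\eta_r$ fails uniformly in $\eta_R$. The gauge-twist $\psi=e^{i\eta_R\cdot r}h$ is the one non-routine ingredient: it absorbs $\eta_R$ into a unimodular phase and restores full ellipticity in the remaining Fourier variable, turning the problem into a standard application of Mikhlin. All other steps---change of variables, Plancherel in $R_{j,k}$, and verification of the Mikhlin bounds---are routine bookkeeping.
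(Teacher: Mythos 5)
Your proposal is correct and follows essentially the same route as the paper: Plancherel when $j\notin D$, a direct application of the Mikhlin multiplier theorem when $D=\{j\}$, and, for $D=\{j,k\}$, the change of variables $\mathcal{R}_{j,k}$, a partial Fourier transform in $R_{j,k}$, and the unimodular gauge factor $e^{i\eta_R\cdot r}$ (the paper's $\exp(i2\pi r_{j,k}\cdot\xi_{R_{j,k}})$) to convert $\nabla_r+i\eta_R$ into a conjugated $\nabla_r$ before invoking Mikhlin uniformly in $\eta_R$. Your identification of the degeneracy of the symbol of $\Delta_j$ on the hyperplane $\eta_R=-\eta_r$ as the obstruction removed by the gauge twist is exactly the point the paper's computation is built around.
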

\begin{proof}
We first consider the case $j\not\in D$. By applying the Plancherel theorem on variable $x_j$, it is easy to see that \eqref{eq:2.6a}-\eqref{eq:2.6b} hold. Now suppose $D=\{j\}$. Estimates \eqref{eq:2.6a} and \eqref{eq:2.6b} follow directly from Theorem~\ref{th:a.2} with $n=3$ and multiplier
\[
a(\xi)=\xi(1+|\xi|^2)^{-1/2}\quad\mbox{or}\quad  a(\xi)=(1+|\xi|^2)^{-1/2}, \qquad \xi\in\R^3. 
\]

Finally, suppose $D=\{j,k\}$ with $k\in\{1,\cdots,N\}\setminus\{j\}$.  
We only prove for \eqref{eq:2.6a}. The estimate \eqref{eq:2.6b} follows analogously. Using the change of variables \eqref{Lp2-rjk}, 
the identities \eqref{eq:Rjk-nabla}--\eqref{eq:Rjk-delta}, and applying the Plancherel theorem in $R_{j,k}$, we obtain
\[
\begin{aligned}
 \|\nabla_jg\|_{L^{p,2}_{j,k}} &= \|\mathcal{R}_{j,k}\nabla_jg\|_{L^{p,2}_{j,k}} = \frac{1}{2}\|(\nabla_{r_{j,k}} + \nabla_{R_{j,k}})\mathcal{R}_{j,k}g\|_{L^{p,2}_{r_{j,k}}}\\
 &= \frac{1}{2}\|\mathcal{F}_{R_{j,k}} \left((\nabla_{r_{j,k}}+\nabla_{R_{j,k}})\mathcal{R}_{j,k}g\right)\|_{L^{p,2}_{r_{j,k}}} =\frac{1}{2}\|(\nabla_{r_{j,k}}+i2\pi\xi_{R_{j,k}})\mathcal{F}_{R_{j,k}}\mathcal{R}_{j,k}g\|_{L^{p,2}_{r_{j,k}}}\\
 & = \frac{1}{2}\|\exp(i2\pi r_{j,k} \cdot \xi_{R_{j,k}})(\nabla_{r_{j,k}}+i2\pi\xi_{R_{j,k}})\mathcal{F}_{R_{j,k}}\mathcal{R}_{j,k}g\|_{L^{p,2}_{r_{j,k}}} = \frac{1}{2}\|\nabla_{r_{j,k}}\exp{(i2\pi r_{j,k}\cdot\xi_{R_{j,k}})}\mathcal{F}_{R_{j,k}}\mathcal{R}_{j,k}g\|_{L^{p,2}_{r_{j,k}}}\\
   &\lesssim_p\|(1-\frac{1}{4}\Delta_{r_{j,k}})^{1/2}\exp{(i2\pi r_{j,k}\cdot\xi_{R_{j,k}})}\mathcal{F}_{R_{j,k}}\mathcal{R}_{j,k}g\|_{L^{p,2}_{r_{j,k}}}=\|(1-\frac{1}{4}|\nabla_{r_{j,k}}+i2\pi \xi_{R_{j,k}}|^2)^{1/2}\mathcal{F}_{R_{j,k}}\mathcal{R}_{j,k}g\|_{L^{p,2}_{r_{j,k}}}\\
   &= \|(1-\frac{1}{4}|\nabla_{r_{j,k}}+\nabla_{R_{j,k}}|^2)^{1/2}\mathcal{R}_{j,k}g\|_{L^{p,2}_{r_{j,k}}}= \|(1-\Delta_j)^{1/2} g\|_{L^{p,2}_{j,k}}.
\end{aligned}
\]
Here we use the fact that for any function $f(y)$ with $y\in \R^3$, 
\[
\begin{aligned}
 (1-\frac{1}{4}\Delta_{y})^{1/2}\exp{(i2\pi a\cdot \xi_y)} f(y) &= \mathcal{F}_y^{-1} \Big[(1-\frac{1}{4} \cdot (i2\pi \xi_y)^2)^{1/2}\mathcal{F}_y\big(\exp{(i2\pi a\cdot \xi_y)} f\big)(\xi_y)\Big](y)\\
&=\mathcal{F}_y^{-1} \Big[(1+\pi^2|\xi_y|^2)^{1/2}\mathcal{F}_y\big(f\big)(\xi_y+a)\Big](y)= (1-\frac{1}{4}|\nabla_y+i2\pi a|^2)^{1/2}f(y)
\end{aligned}
\]
and for the last identity, we use \eqref{eq:Rjk-delta}. 
\end{proof}
\subsection{Approximation estimates}
\label{sec:approx-estimate}
Normally, operators bounded on $\cH$ are not bounded on $L^{p,2}_{D}$. However, the next result shows that if a bounded operator $\mathcal{P}$ on $\cH$ commutes with the free propagator $U_0$, i.e., $[\mathcal{P},U_0]=0$, 
then after composition with the operator $S$ it is also bounded on $L^{p,2}_{D}$. 
This property is a key ingredient in the analysis of our sparse grid approximation. 
\begin{proposition}\label{th:2.8}
Let $2\leq p,{\widetilde{p}}<6$. Suppose $\mathcal{P}$ is a bounded operator on $\cH$ such that $[\mathcal{P},U_0]=0$ and $\|\mathcal{P}\|_{\mathcal{B}(\cH)}\leq 1$. 
Then we have 
\begin{align}\label{eq:PS-strichartz}
    \|\mathcal{P}Su(\cdot,x)\|_{L_t^{\theta_p}(\R,L^{p,2}_{D})}
   \;\lesssim_{p,{\widetilde{p}}}\;
   \|u\|_{L_t^{\theta_{\widetilde{p}}'}(\R,L^{{\widetilde{p}}',2}_{D'})}.
\end{align}
\end{proposition}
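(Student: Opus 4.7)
The plan is to factor the Duhamel-type operator $S$ through the base Hilbert space $\cH$, where the hypothesis $\|\mathcal{P}\|_{\cB(\cH)}\le 1$ can be applied, and to restore the causal time truncation at the very end via the Christ--Kiselev lemma. First I would introduce the untruncated counterpart $\widetilde S u(t):=\int_{\R} U_0(t-\tau)u(\tau)\,d\tau$. Using the group law $U_0(t-\tau)=U_0(t)U_0(\tau)^*$, this operator factors as
\begin{equation*}
\widetilde S u(t)=U_0(t)\,Gu, \qquad Gu:=\int_{\R}U_0(\tau)^*u(\tau)\,d\tau,
\end{equation*}
in which $G:L^{\theta_{\widetilde p}'}_t(\R,L^{\widetilde p',2}_{D'})\to \cH$ is bounded by \eqref{eq:2.2b} and $U_0(\cdot):\cH\to L^{\theta_p}_t(\R,L^{p,2}_{D})$ is bounded by \eqref{eq:2.2a}.

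Next, since $[\mathcal{P},U_0(t)]=0$ for every $t\in\R$, one has $\mathcal{P}\widetilde S u(t)=\mathcal{P} U_0(t)Gu=U_0(t)\,\mathcal{P}Gu$. Chaining \eqref{eq:2.2a}, the contraction $\|\mathcal{P}\|_{\cB(\cH)}\le 1$, and \eqref{eq:2.2b} would then yield
\begin{equation*}
\|\mathcal{P}\widetilde S u\|_{L^{\theta_p}_t(\R,L^{p,2}_{D})}\;\lesssim_p\;\|\mathcal{P}Gu\|_{\cH}\;\le\;\|Gu\|_{\cH}\;\lesssim_{\widetilde p}\;\|u\|_{L^{\theta_{\widetilde p}'}_t(\R,L^{\widetilde p',2}_{D'})},
\end{equation*}
so the untruncated operator $\mathcal{P}\widetilde S$ already satisfies an inequality of the same form as \eqref{eq:PS-strichartz}, but without the causal cutoff.

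To pass from $\widetilde S$ to $S$ (whose kernel carries the cutoff $\mathbbm{1}_{\tau<t}$), I would invoke the Christ--Kiselev lemma in its Banach-valued form: an integral operator with kernel $K(t,\tau)$ bounded from $L^{a}_t(X)$ to $L^{b}_t(Y)$ with $a<b$ has a causal truncation bounded between the same spaces. Here $a=\theta_{\widetilde p}'$ and $b=\theta_p$; the hypothesis $2\le p,\widetilde p<6$ forces $\theta_p>2$ and $\theta_{\widetilde p}'<2$, so the strict inequality $\theta_{\widetilde p}'<\theta_p$ required by the lemma holds automatically. Applying it to $\mathcal{P}\widetilde S$ will then deliver \eqref{eq:PS-strichartz}.

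The one subtle point will be checking that Christ--Kiselev remains valid for our Banach-space-valued targets $L^{p,2}_D$ and $L^{\widetilde p',2}_{D'}$; this is well-documented in the Strichartz literature (the argument only uses that the target is a Banach space), so I expect no real obstacle. Every other step is routine bookkeeping around the commutation $[\mathcal{P},U_0]=0$ and the existing estimates \eqref{eq:2.2a}--\eqref{eq:2.2b}.
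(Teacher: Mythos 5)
Your proposal is correct and follows essentially the same route as the paper's proof: factor the untruncated operator through $\cH$ via $U_0(t)\int_{\R}U_0(s)^*u(s)\,ds$, use $[\mathcal{P},U_0]=0$ to slide $\mathcal{P}$ onto the $\cH$-valued intermediate quantity, chain the Strichartz estimates \eqref{eq:2.2a}--\eqref{eq:2.2b} with $\|\mathcal{P}\|_{\cB(\cH)}\le 1$, and finish with the Christ--Kiselev lemma (the exponent check $\theta_{\widetilde p}'<2<\theta_p$ for $2\le p,\widetilde p<6$ is the same implicit step). No substantive differences.
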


\begin{proof}
We're going to use the Christ--Kiselev lemma~\cite{ChristKiselev} to prove this lemma. Since $\mathcal{P}$ and $U_0$ commute, we have
\[
\left\|\mathcal{P}\int_\R U_0(t-s)u(s,x)ds\right\|_{L_t^{\theta_p}(\R,L^{p,2}_{D})}=\left\|U_0(t)\mathcal{P}\int_\R U_0(s)^*u(s,x)ds\right\|_{L_t^{\theta_p}(\R,L^{p,2}_{D})}.
\]
Applying \eqref{eq:2.2a} and using $[\mathcal{P},U_0]=0$ gives 
\[
\left\|\mathcal{P}\int_\R U_0(t-s)u(s,x)ds\right\|_{L_t^{\theta_p}(\R,L^{p,2}_{D})}\lesssim_{p}\left\|\mathcal{P}\int_\R U_0(s)^*u(s,x)ds\right\|_{\cH}.
\]
Since $\|\mathcal{P}\|_{\mathcal{B}(\cH)}\leq 1$, by \eqref{eq:2.2b} we further obtain
\[
\left\|\mathcal{P}\int_\R U_0(t-s)u(s,x)ds\right\|_{L_t^{\theta_p}(\R, L^{p,2}_{D})}\lesssim_p\left\|\int_\R U_0(s)^*u(s,x)ds\right\|_{\cH}\lesssim_{p,{\widetilde{p}}}\|u\|_{L_t^{\theta_{\widetilde{p}}'}(\R,L^{{\widetilde{p}}',2}_{D'})}.
\]
Then \eqref{eq:PS-strichartz} follows from the Christ--Kiselev lemma, hence the proposition.
\end{proof}
In addition, if the projector $\mathcal{P}_R$ further satisfies Assumption~\ref{ass:1.8}, 
we obtain the following estimate~\eqref{eq:2.3b} as a corollary of Proposition~\ref{th:2.8}. 
\begin{corollary}\label{cor:2.5}
Let $\mathcal{P}_R$ satisfy Assumption~\ref{ass:1.8}. 
For $D,D'\subset\{1,\cdots,N\}$ with $1\leq|D|,|D'|\leq 2$ and $2\leq p,\widetilde{p}<6$, we have
\begin{subequations}
\begin{align}
    \|\mathcal{P}_{R} S u\|_{L_t^{\theta_p}(\R,L^{p,2}_{D})}
    &\lesssim_{p,\widetilde{p}} 
      \|u\|_{L_t^{\theta_{\widetilde{p}}'}(\R,L^{\widetilde{p}',2}_{D'})},
      \label{eq:2.3a}\\[0.5em]
    \left\|
      \Big(\sum_{1\leq \ell\leq 2}\cL_{I_\ell}\Big)^{-1} 
      (1-\mathcal{P}_{R}) S u
    \right\|_{L_t^{\theta_p}(\R,L^{p,2}_{D})}
    &\lesssim_{p,\widetilde{p}} 
      \frac{1}{R}\,
      \|u\|_{L_t^{\theta_{\widetilde{p}}'}(\R,L^{\widetilde{p}',2}_{D'})}.
      \label{eq:2.3b}
\end{align}
\end{subequations}
\end{corollary}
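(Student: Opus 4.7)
The plan is to reduce both estimates to Proposition~\ref{th:2.8} by selecting an appropriate operator to play the role of $\mathcal{P}$ in that proposition. For \eqref{eq:2.3a} this is immediate; for \eqref{eq:2.3b} the idea is to absorb the factor $1/R$ into a renormalised operator that still satisfies the two hypotheses of Proposition~\ref{th:2.8}, namely $\cH$-boundedness by $1$ and commutativity with $U_0$.

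For \eqref{eq:2.3a}, I would check directly that $\mathcal{P}_R$ fulfills the hypotheses of Proposition~\ref{th:2.8}. By Assumption~\ref{ass:1.8}, $0\leq \mathcal{P}_R\leq \mathbbm{1}$, hence $\|\mathcal{P}_R\|_{\cB(\cH)}\le 1$. Moreover $[\mathcal{P}_R,\Delta_j]=0$ for every $j=1,\dots,N$, so by the functional calculus $\mathcal{P}_R$ commutes with each propagator $e^{\frac{it}{2}\Delta_j}$, and therefore with $U_0=\prod_{j} e^{\frac{it}{2}\Delta_j}$. Applying Proposition~\ref{th:2.8} with $\mathcal{P}:=\mathcal{P}_R$ yields \eqref{eq:2.3a}.

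For \eqref{eq:2.3b}, set $A:=\cL_{I_1}+\cL_{I_2}$. Its Fourier symbol $\sum_{\ell=1,2}\prod_{j\in I_\ell}(1+|2\pi\xi_j|^2)^{1/2}$ is bounded below by $2$, so $A$ is a positive self-adjoint Fourier multiplier with bounded inverse on $\cH$. Because $[\mathcal{P}_R,\Delta_j]=0$ for all $j$, the functional calculus gives $[\mathcal{P}_R,A]=[\mathcal{P}_R,A^{-1}]=0$, and by the same token $[A^{-1},U_0]=0$. The key quantitative step is to apply the approximation clause of Assumption~\ref{ass:1.8} to $A^{-1}v$ in place of $u$, which yields
\[
    \|(1-\mathcal{P}_R)A^{-1}v\|_{\cH}\le \frac{1}{R}\big\|A\,A^{-1}v\big\|_{\cH}=\frac{1}{R}\|v\|_{\cH},
\]
so $\|A^{-1}(1-\mathcal{P}_R)\|_{\cB(\cH)}\le 1/R$. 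Hence the operator $\mathcal{Q}:=R\,A^{-1}(1-\mathcal{P}_R)$ has $\cB(\cH)$-norm $\le 1$ and commutes with $U_0$. Applying Proposition~\ref{th:2.8} with $\mathcal{P}:=\mathcal{Q}$ and dividing by $R$ gives \eqref{eq:2.3b}.

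I do not foresee a real obstacle: the only delicate point is ensuring that $A^{-1}$ legitimately commutes with both $1-\mathcal{P}_R$ and $U_0$, which follows from the commutation relation $[\mathcal{P}_R,\Delta_j]=0$ via functional calculus, and that the approximation bound in Assumption~\ref{ass:1.8} can be rewritten as an operator bound on $A^{-1}(1-\mathcal{P}_R)$, which is just the substitution $u\mapsto A^{-1}v$. Everything else is a direct invocation of Proposition~\ref{th:2.8}.
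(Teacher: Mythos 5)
Your proposal is correct and follows essentially the same route as the paper: both estimates are reduced to Proposition~\ref{th:2.8}, taking $\mathcal{P}=\mathcal{P}_R$ for \eqref{eq:2.3a} and the renormalised operator $R\,\bigl(\sum_{\ell=1,2}\cL_{I_\ell}\bigr)^{-1}(1-\mathcal{P}_R)$ for \eqref{eq:2.3b}. Your derivation of the operator bound $\bigl\|\bigl(\sum_{\ell}\cL_{I_\ell}\bigr)^{-1}(1-\mathcal{P}_R)\bigr\|_{\cB(\cH)}\le 1/R$ via the substitution $u\mapsto A^{-1}v$ in the approximation clause is exactly the content of \eqref{eq:3.5}, which the paper states without spelling out.
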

\begin{proof}
By the definition of $\mathcal{P}_{R}$, we have
\[
[\mathcal{P}_{R},U_0]=0, 
\qquad \|\mathcal{P}_{R} u\|_{\cH}\leq \|u\|_{\cH}.
\]
Thus, Proposition~\ref{th:2.8} with $\mathcal{P}=\mathcal{P}_R$ gives \eqref{eq:2.3a}.  For \eqref{eq:2.3b}, recall from Assumption~\ref{ass:1.8} that
\begin{align}\label{eq:3.5}
   R\Big\|\Big(\sum_{\ell=1,2}\cL_{I_\ell}\Big)^{-1}(1-\mathcal{P}_R)u\Big\|_\cH
   \;\leq\;\|u\|_\cH.
\end{align}
Since $[\cL_{I_\ell},U_0]=0$, we may set 
$\mathcal{P}:=R\,(1-\mathcal{P}_R)\Big(\sum_{\ell=1,2}\cL_{I_\ell}\Big)^{-1}$. 
Then $[\mathcal{P},U_0]=0$ and $\|\mathcal{P}\|_{\cB(\cH)}\leq 1$. Applying Proposition~\ref{th:2.8} to this $\mathcal{P}$ yields \eqref{eq:2.3b}.
\end{proof}

\section{Existence and mixed regularity of solutions}\label{sec:4}
In this section, we study the existence and mixed regularity of solutions to \eqref{eq:1.1}. 
The proof of existence and uniqueness follows essentially from \cite{Yajima2} with minor adjustments to fit our setting. 
Our main focus here is on establishing the mixed regularity of the solution.

\subsection{Existence of solutions}\label{sec:exist}
In this subsection, we study the existence of solutions to \eqref{eq:1.1} in $X_{p,T}$. 
We present the proof of existence using the same method as for mixed regularity, 
serving as a technical preparation for the next theorem. 

\begin{proof}[Proof of Theorem \ref{th:existence}]
The key step is to show 
$\|Qu\|_{X_{p,T}}\leq C T^{\theta}\|u\|_{X_{p,T}}$
for some $\theta>0$, $2<p\leq 6$, and $T>0$.  
To this end, we consider the scalar product version:  
for any $D\subset\{1,\cdots,N\}$ with $1\leq |D|\leq 2$, 
$u(t,x)\in X_{p,T}$, and $v(t,x)\in L_t^{\theta'_{p}}(I_T,L^{p',2}_{D})$ or 
$v(t,x)\in L_t^{1}(I_T,\cH)$, we are going to prove
\begin{align*}
   \left|\int_{0}^T\langle (Qu)(t),v(t) \rangle \,dt\right|
   \;\lesssim\; T^{\theta}\|u\|_{X_{p,T}}
   \min\big\{\|v\|_{L_t^{\theta'_{p}}(I_T,L^{p',2}_{D})},\,\|v\|_{L^1_t(I_T,\cH)}\big\}, 
\end{align*}
To do so, we split the analysis into two parts: the electron–nucleus potentials and the electron–electron potentials.

\medskip

{\bf Step 1. Study of the potentials between electrons and nuclei.} For any $0< \alpha< \frac{1}{2}$, we have
\begin{align}\label{eq:SVu1}
   \MoveEqLeft \left|\int_0^T\Big\langle S\Big(\frac{1}{|x_j-a_\mu(\cdot)|}u(\cdot,x)\Big)(t),v(t,x) \Big\rangle \;dt\right|\notag\\
    &= \left|\int_0^T\Big\langle \frac{1}{|x_j-a_\mu(s)|^\alpha}u(s,x),\frac{1}{|x_j-a_\mu(s)|^{1-\alpha}} S^*\big(v(\cdot,x)\big)(s) \Big\rangle \;ds\right|\notag\\
    &\leq \int_0^T \left\|\frac{1}{|x_j-a_\mu(s)|^\alpha}u(s,x)\right\|_{\cH}\left\|\frac{1}{|x_j-a_\mu(s)|^{1-\alpha}} S^*\big(v(\cdot,x)\big)(s)\right\|_{\cH}ds.
\end{align}
According to the H\"older inequality, for any $2\leq r< \frac{3}{\alpha}$ and $\frac{1}{r}+\frac{1}{p_1}=\frac{1}{2}$,
\begin{align}\label{eq:u-p1}
 \MoveEqLeft  \Big\| \frac{1}{|x_j-a_\mu(s)|^\alpha}u(s,x)\Big\|_{\cH}= \Big\| \frac{1}{|x_j-a_\mu(s)|^\alpha} \|u(s,x_j,\cdot)\|_{L^2((\R^3)^{N-1})}\Big\|_{L^2_j(\R^3)}\notag\\
    &\leq \|u(s)\|_{\cH}+\left\|\frac{1}{|x_j-a_\mu(s)|^\alpha}\mathbbm{1}_{|x_j-a_\mu(s)|\leq 1}\right\|_{L^r_j(\R^3)}\|u(s)\|_{L^{p_1,2}_j}\lesssim_{\alpha,p_1} \|u(s)\|_{\cH}+\|u(s)\|_{L^{p_1,2}_j}.
\end{align}
Similarly, for any $2\leq \widetilde{r}< \frac{3}{1-\alpha}$ and $\frac{1}{\widetilde{r}}+\frac{1}{\widetilde{p}}=\frac{1}{2}$,
\begin{align}\label{eq:Sv}
    \Big\|\frac{1}{|x_j-a_\mu(s)|^{1-\alpha}} (S^*v)(s) \Big\|_{\cH}\lesssim_{\alpha,\widetilde{p}} \|(S^*v)(s)\|_{\cH}+\|(S^*v)(s)\|_{L^{\widetilde{p},2}_j}.
\end{align}
Next we apply the dual form of the Strichartz estimate \eqref{eq:2.2c} to $S^*v$.  
To do so, we have to add the restriction $2\leq \widetilde{p}\leq 6$ to use the Strichartz estimates. Combining with $2\leq \widetilde{r}< \tfrac{3}{1-\alpha}$ and $\tfrac{1}{\widetilde{r}}+\tfrac{1}{\widetilde{p}}=\tfrac{1}{2}$, we have
\begin{align}\label{eq:tildep1}
    \tfrac{6}{1+2\alpha}<\widetilde{p}\leq 6.
\end{align}
Therefore, for any $D\subset \{1,\cdots,N\}$ with $0\leq |D|\leq 2$, and any $\widetilde{p}$ as in \eqref{eq:tildep1}, $p$ with $2<p\leq 6$, we have
\begin{align}\label{eq:Sv1}
    \|(S^*v)(s)\|_{L_t^\infty(I_T,\cH)} 
      &\lesssim_{p,\widetilde{p}} 
      \min\big\{\|v\|_{L_t^{\theta'_{p}}(I_T, L^{p',2}_D)},\,
                \|v\|_{L^1_t(I_T,\cH)}\big\},\\[0.5em]
\label{eq:Sv2}
    \|(S^*v)(s)\|_{L_t^{\theta_{\widetilde{p}}}(I_T,L^{\widetilde{p},2}_j)}
      &\lesssim_{p,\widetilde{p}} 
      \min\big\{\|v\|_{L_t^{\theta'_{p}}(I_T, L^{p',2}_D)},\,
                \|v\|_{L^1_t(I_T,\cH)}\big\}.
\end{align}
Finally, to ensure that $Q$ maps $X_{p,T}$ into itself, we set $p_1=p$ with $2\leq p\leq 6$.  
Then, from $2\leq r< \tfrac{3}{\alpha}$ and $\tfrac{1}{r}+\tfrac{1}{p_1}=\tfrac{1}{2}$,
\begin{align}\label{eq:p1}
   \tfrac{6}{3-2\alpha}< p\leq 6.
\end{align}

As a result, from \eqref{eq:SVu1}--\eqref{eq:Sv} and \eqref{eq:Sv1}--\eqref{eq:Sv2} we deduce that, under conditions \eqref{ass1}--\eqref{ass2} in Assumption~\ref{ass} on $\alpha,p,\widetilde{p}$, for any $0<T<1$ and any $D\subset\{1,\cdots,N\}$ with $1\leq |D|\leq 2$,
\begin{align}\label{eq:SV-scalar}
   \MoveEqLeft \left|\int_0^T\Big\langle S\Big(\frac{1}{|x_j-a_\mu(\cdot)|}u(\cdot,x)\Big)(t),v(t,x) \Big\rangle \;dt\right|\notag\\
   &\leq \int_0^T \left\|\frac{1}{|x_j-a_\mu(s)|^\alpha}u(s,x)\right\|_{\cH}\left\|\frac{1}{|x_j-a_\mu(s)|^{1-\alpha}} S^*\big(v(\cdot,x)\big)(s)\right\|_{\cH}ds\notag\\
   &\lesssim_{\alpha, p,\widetilde{p}} \Big(\|u\|_{L_t^1(I_T,\cH)}+ \|u(s)\|_{L_t^1(I_T,L^{p,2}_j)}\Big)\|S^*v\|_{L_t^\infty(I_T,\cH)}+\Big(\|u\|_{L_t^{\theta'_{\widetilde{p}}}(I_T,\cH)}+ \|u\|_{L_t^{\theta'_{\widetilde{p}}}(I_T,L^{p,2}_j)}\Big)\|S^*v\|_{L_t^{\theta_{\widetilde{p}}}(I_T,L^{\widetilde{p},2}_j)}\notag\\
   &\lesssim_{\alpha,p,\widetilde{p}} T^{1/\theta'_{\widetilde{p}}-1/\theta_{p}}\Big(\|u\|_{L_t^{\infty}(I_T,\cH)}+ \|u\|_{L_t^{\theta_{p}}(I_T,L^{p,2}_j)}\Big)\min\{\|v\|_{L_t^{\theta'_{p}}(I_T, L^{p',2}_D)},\|v\|_{L^1_t(I_T,\cH)}\}.
\end{align}
Here we also use the assumption $1/\theta'_{\widetilde{p}}-1/\theta_{p}>0$.  
Since $u\in X_{p,T}$, by duality we obtain, for any $0<T<1$ and any $D\subset\{1,\cdots,N\}$ with $1\leq |D|\leq 2$,
\[
   \max\!\left\{\Big\|S\Big(\tfrac{1}{|x_j-a_\mu(\cdot)|}u(\cdot,x)\Big)\Big\|_{L_t^{\infty}(I_T,\cH)},\,
              \Big\|S\Big(\tfrac{1}{|x_j-a_\mu(\cdot)|}u(\cdot,x)\Big)\Big\|_{L_t^{\theta_{p}}(I_T,L^{p,2}_D)}\right\}
   \lesssim_{\alpha,p,\widetilde{p}} T^{1/\theta'_{\widetilde{p}}-1/\theta_{p}} \|u\|_{X_{p,T}}.
\]
Consequently, we infer that under under condition \eqref{ass1}-\eqref{ass2} in Assumption \ref{ass} on $\alpha,p,\widetilde{p}$ and $0<T<1$,
\begin{align}\label{eq:SV}
    \left\|S(V(\cdot,x_j)u)\right\|_{X_{p,T}}\lesssim_{\alpha,p_,\widetilde{p}} ZT^{1/\theta'_{\widetilde{p}}-1/\theta_{p}} \|u\|_{X_{p,T}}
\end{align}
\medskip

{\bf Step 2. Study of the potentials between electrons and electrons.} The proof of Step 2 is essentially the same as for Step~1. So we only highlight the differences.

For $\frac{1}{|x_j-x_k|}$, the estimates \eqref{eq:u-p1} and \eqref{eq:Sv} become: For any $2\leq r< \frac{3}{\alpha}$ with $\frac{1}{r}+\frac{1}{p_1}=\frac{1}{2}$,
\begin{align*}
    \left\|\tfrac{1}{|x_j-x_k|^{\alpha}}u(s)\right\|_{\cH}
    \;\lesssim_{\alpha,p}\; \|u(s)\|_{\cH}+\|u(s)\|_{L^{p,2}_{j,k}},
\end{align*}
and for any $2\leq \widetilde{r}< \tfrac{3}{1-\alpha}$ with $\tfrac{1}{\widetilde{r}}+\tfrac{1}{\widetilde{p}}=\tfrac{1}{2}$,
\begin{align*}
    \left\|\tfrac{1}{|x_j-x_k|^{1-\alpha}}(S^*v)(s)\right\|_{\cH}
    \;\lesssim_{\alpha,\widetilde{p}}\; \|(S^*v)(s)\|_{\cH}+\|(S^*v)(s)\|_{L^{\widetilde{p},2}_{j,k}}.
\end{align*}
Proceeding as in Step~1, we need to set $p_1=p$, and $p$ satisfies \eqref{eq:p1}. Then under condition \eqref{ass1}-\eqref{ass2} in Assumption~\ref{ass} on $\alpha,p,\widetilde{p}$, for any $0<T<1$ and any $D\subset\{1,\cdots,N\}$ with $1\leq |D|\leq 2$, we obtain
\begin{align}\label{eq:SW-scalar}
   \MoveEqLeft \left|\int_0^T\Big\langle S\Big(\frac{1}{|x_j-x_k|}u(\cdot,x)\Big)(t),v(t,x) \Big\rangle dt\right|\leq \int_0^T\left\| \frac{1}{|x_j-x_k|^\alpha} u(s,x)\right\|_{\cH}\left\|\frac{1}{|x_j-x_k|^{1-\alpha}}(S^*v(\cdot,x))(s) \right\|_{\cH} ds \notag\\
&\qquad\qquad\qquad\qquad\lesssim_{\alpha,p,\widetilde{p}} T^{1/\theta'_{\widetilde{p}}-1/\theta_{p}}\Big(\|u\|_{L_t^{\infty}(I_T,\cH)}+ \|u\|_{L_t^{\theta_{p}}(I_T,L^{p,2}_{j,k})}\Big)\min\{\|v\|_{L_t^{\theta'_{p}}(I_T, L^{p',2}_D)},\|v\|_{L^1_t(I_T,\cH)}\}.
\end{align}
As a result, under condition \eqref{ass1}-\eqref{ass2} in Assumption \ref{ass} on $\alpha,p,\widetilde{p}$ and $0<T<1$,
\begin{align}\label{eq:SWu1}
    \left\|S(W(x_j,x_k)u)\right\|_{X_{p,T}}\lesssim_{\alpha,p,\widetilde{p}} T^{1/\theta'_{\widetilde{p}}-1/\theta_{p}} \|u\|_{X_{p,T}}.
\end{align}

\medskip

{\bf Step 3. Conclusion.} From \eqref{eq:SV} and \eqref{eq:SWu1}, we infer that under condition \eqref{ass1}-\eqref{ass2} in Assumption \ref{ass} on $\alpha,p,\widetilde{p}$ and $0<T<1$, there exists a constant $C_{T,1}:=C_{T,1}(\alpha,p,\widetilde{p})\geq 1$ such that
\begin{align}\label{eq:CT1}
    \left\|Qu\right\|_{X_{p,T}}\leq C_{T,1} (Z+N)NT^{1/\theta'_{\widetilde{p}}-1/\theta_{p}} \|u\|_{X_{p,T}}.
\end{align}
Now let $C_{T,1}(Z+N)NT^{1/\theta'_{\widetilde{p}}-1/\theta_{p}}\leq \frac{1}{2}$, then we have $\|Qu\|_{X_{p,T}}\leq \tfrac{1}{2}\|u\|_{X_{p,T}}$. Thus under Assumption \ref{ass} on $p,T$, we know $1+iQ$ is invertible on $X_{p,T}$. As a result,
\begin{align}\label{u-solution}
    u=(1+iQ)^{-1} (U_0(\cdot) u_0)
\end{align}
and
\begin{align*}
    \|u\|_{X_{p,T}}=\|(1+iQ)^{-1}(U_0(\cdot)u_0)\|_{X_{p,T}}\leq 2\|U_0(\cdot)u_0\|_{X_{p,T}}\lesssim_{p} \|u_0\|_{\cH}.
\end{align*}
This gives \eqref{eq:u-solution} and shows the uniqueness of the solution $u\in X_{p,T}$. By conservation law, $\|u\|_{ \cH}(t)=\|u_0\|_{\cH}$ for $t\in \R$. The standard continuation procedure for the solutions to \eqref{eq:1.1} yields a unique global-in-time solution $u\in X_{p,\infty}$. This completes the proof of Theorem \ref{th:existence}.
\end{proof}

\subsection{Mixed regularity}\label{sec:mix}
Now we can study the mixed regularity of the unique solution. In the following, we use
\begin{align}\label{frac-laplace-dec}
    (1-\Delta_j)^{1/2}=\frac{1}{(1-\Delta_j)^{1/2}}-\frac{\nabla_j}{(1-\Delta_j)^{1/2}}\cdot (\nabla_j),
\end{align}
and define
\begin{align}\label{L-jk}
    \cL_{I,j}=\prod_{m\in I\setminus\{j\}}(1-\Delta_m)^{1/2},\qquad \cL_{I,j,k}=\prod_{m\in I\setminus\{j,k\}}(1-\Delta_m)^{1/2}.
\end{align}
\begin{proof}[Proof of Theorem \ref{th:mixregularity}]
Before going further, we first show that $u(t,x)$ is antisymmetric with respect to $I$ for any $t\in I_T$ under condition \eqref{ass3} in Assumption \ref{ass} on $T$. 
Let $j,k\in I$. Since $u_0=-P_{j,k}u_0$ for the permutation operator $P_{j,k}$ defined in Definition~\ref{def:anti}, 
it follows from \eqref{u-solution} that
\begin{align*}
    u=-(1+iQ)^{-1}(U_0(\cdot)P_{j,k}u_0)
     =-P_{j,k}(1+iQ)^{-1}(U_0(\cdot)u_0)
     =-P_{j,k} u. 
\end{align*}
Here we use the fact that $U_0(\cdot) P_{j,k}=P_{j,k} U_0$ and $Q P_{j,k}=P_{j,k} Q$. 
Therefore, under Assumption \ref{ass} on $p,T$,
\begin{align}\label{u-anti}
    u=-P_{j,k} u \qquad \text{in}\quad X_{p,T},
\end{align}
which shows that $u$ is antisymmetric w.r.t, $I$ for any $t\in I_T$.

We now show that for any $D\subset\{1,\cdots,N\}$ with $1\leq |D|\leq 2$, 
and for any $u(t,x)\in X^1_{I,p,T}$ and $v(t,x)\in L_t^{\theta'_{p}}(I_T,L^{p',2}_{D})$ 
or $v(t,x)\in L_t^{\infty}(I_T,\cH)$, one has
\begin{align*}
   \left|\int_{0}^T\langle (\cL_I Qu)(t),v(t) \rangle \;dt\right|
   \;\lesssim\; T^\theta \|u\|_{X^1_{I,p,T}}
   \min\Big\{\|v\|_{L_t^{\theta'_{p}}(I_T,L^{p',2}_{D})},\;
               \|v\|_{L^\infty_0(I_T,\cH)}\Big\}
\end{align*}
for some $\theta>0$ and $2<p\leq 6$.  
As before, we split the analysis into the case of electron–nucleus potentials and the case of electron–electron potentials.

\medskip

{\bf Step 1. Study of the potentials between electrons and nuclei.} First of all, we assume $j\not\in I$. Then we have
\begin{align*}
    \cL_I \frac{1}{|x_j-a_\mu(t)|}u&= \frac{1}{|x_j-a_\mu(t)|}\cL_Iu. 
\end{align*}
According to \eqref{eq:SV}, we infer that under condition \eqref{ass1}-\eqref{ass2} in Assumption \ref{ass} and for any $0<T<1$,
\begin{align}\label{eq:LI-SV}
\left\|S\cL_I(V(\cdot,x_j)u)\right\|_{X^1_{I,p,T}}=\left\|S(V(\cdot,x_j)\cL_Iu)\right\|_{X^1_{I,p,T}}\lesssim_{\alpha,p_,\widetilde{p}} ZT^{1/\theta'_{\widetilde{p}}-1/\theta_{p}} \|u\|_{X^1_{I,p,T}}.
\end{align}

\medskip

Now we consider the case $j\in I$. By \eqref{frac-laplace-dec} we have
\begin{align}\label{eq:LI-V}
   \MoveEqLeft \cL_I \frac{1}{|x_j-a_\mu(t)|}u= (1-\Delta_j)^{-1/2}\left[\frac{1}{|x_j-a_\mu(t)|}\cL_{I,j}u\right]-\big[(1-\Delta_j)^{-1/2}\nabla_j\big]\cdot \nabla_j\left[\frac{1}{|x_j-a_\mu(t)|}\cL_{I,j}u\right]
\end{align}
where $\cL_{I,j}$ is defined by \eqref{L-jk}. For the first term on the right-hand side of \eqref{eq:LI-V}, from \eqref{eq:SV-scalar}, we infer that under condition \eqref{ass1}-\eqref{ass2} in Assumption \ref{ass} and for any $0<T<1$ and any $D\subset\{1,\cdots,N\}$ with $1\leq |D|\leq 2$,
\begin{align}\label{eq:LI-SV-scalar1}
\MoveEqLeft \left|\int_0^T\Big\langle 
S(1-\Delta_j)^{-1/2}\Big(\tfrac{1}{|x_j-a_\mu(\cdot)|}\cL_{I,j}u(\cdot,x)\Big),
v(t,x) \Big\rangle \;dt\right|\notag\\
&=\left|\int_0^T\Big\langle 
S\Big(\tfrac{1}{|x_j-a_\mu(\cdot)|}\cL_{I,j}u(\cdot,x)\Big),
(1-\Delta_j)^{-1/2}v(t,x) \Big\rangle \;dt\right|\notag\\
&\lesssim_{\alpha,p,\widetilde{p}} T^{1/\theta'_{\widetilde{p}}-1/\theta_{p}}
\Big(\|u\|_{L_t^{\infty}(I_T,\cH)}+ \|u\|_{L_t^{\theta_{p}}(I_T,L^{p,2}_j)}\Big)\notag\\
&\quad\times \min\{\|(1-\Delta_j)^{-1/2}v\|_{L_t^{\theta'_{p}}(I_T, L^{p',2}_{D})},
\|(1-\Delta_j)^{-1/2}v\|_{L^\infty_t(I_T, \cH)}\}\notag\\
&\lesssim_{\alpha,p,\widetilde{p}} T^{1/\theta'_{\widetilde{p}}-1/\theta_{p}}
\Big(\|u\|_{L_t^{\infty}(I_T,\cH)}+ \|u\|_{L_t^{\theta_{p}}(I_T,L^{p,2}_j)}\Big)
\min\{\|v\|_{L_t^{\theta'_{p}}(I_T, L^{p',2}_{D})},\|v\|_{L^\infty_t(I_T, \cH)}\}.
\end{align}
Here we use the fact that $[S,(1-\Delta_j)^{-1/2}]=0$ in the first equation and Theorem \ref{th:2.7} in the last inequality. 

\medskip
For the second term on the right-hand side of \eqref{eq:LI-V}, notice that
\begin{align*}
    \nabla_j\Big[\frac{1}{|x_j-a_\mu(\cdot)|}\, \cL_{I,j}u(\cdot,x)\Big]
    =\frac{1}{|x_j-a_\mu(\cdot)|}\, \cL_{I,j}\nabla_j u(\cdot,x)
    +\Big[\nabla_j\frac{1}{|x_j-a_\mu(\cdot)|}\Big] \cL_{I,j}u(\cdot,x),
\end{align*}
and by Lemma \ref{Hardy1},
\begin{align*}
 \MoveEqLeft   \left\||x_j-a_\mu(\cdot)|^{1-\alpha}\Big[\nabla_j\frac{1}{|x_j-a_\mu(\cdot)|}\Big] \cL_{I,j}u(\cdot,x)\right\|_{\cH}\lesssim \left\|\frac{1}{|x_j-a_\mu(\cdot)|^{1+\alpha}} \cL_{I,j}u(\cdot,x)\right\|_{\cH}\lesssim_\alpha \left\|\frac{1}{|x_j-a_\mu(\cdot)|^{\alpha}} \cL_{I,j}\nabla_j u(\cdot,x)\right\|_{\cH}.
\end{align*}
Thus,
\begin{align}\label{eq:4.20}
    \left\||x_j-a_\mu(\cdot)|^{1-\alpha}\nabla_j\Big[\frac{1}{|x_j-a_\mu(\cdot)|} \cL_{I,j}u(\cdot,x)\Big]\right\|_{\cH}\lesssim_\alpha \left\|\frac{1}{|x_j-a_{\mu}(\cdot)|^\alpha}\cL_{I,j}\nabla_j u\right\|_{\cH}.
\end{align}
Then we have
\begin{align}\label{eq:LI-SVu2}
   \MoveEqLeft \left|\int_0^T\Big\langle S[(1-\Delta_j)^{-1/2}\nabla_j]\cdot \left(\nabla_j\Big[\frac{1}{|x_j-a_\mu(\cdot)|} \cL_{I,j}u(\cdot,x)\Big]\right),\,v(t,x) \Big\rangle \;dt\right|\notag\\
   &\lesssim_{\alpha} \int_0^T\left\|\frac{1}{|x_j-a_\mu(s)|^\alpha}\,\cL_{I,j} \nabla_j u(s,x)\right\|_{\cH}
      \left\|\frac{1}{|x_j-a_\mu(s)|^{1-\alpha}}\, S^*\!\big((1-\Delta_j)^{-1/2}\nabla_jv(\cdot,x)\big)(s) \right\|_{\cH} \,ds.
\end{align}
Proceeding as for \eqref{eq:SV-scalar} and \eqref{eq:LI-SV-scalar1}, we infer that, under
conditions \eqref{ass1}–\eqref{ass2} in Assumption \ref{ass}, for any $0<T<1$ and any
$D\subset\{1,\cdots,N\}$ with $1\leq |D|\leq 2$, 
\begin{align}\label{eq:LI-SV-scalar2}
\MoveEqLeft \left|\int_0^T\Big\langle S[(1-\Delta_j)^{-1/2}\nabla_j]\cdot \left(\nabla_j\Big[\frac{1}{|x_j-a_\mu(\cdot)|} \cL_{I,j}u(\cdot,x)\Big]\right),v(t,x) \Big\rangle \;dt\right|\notag\\
&\lesssim_{\alpha,p,\widetilde{p}} T^{1/\theta'_{\widetilde{p}}-1/\theta_{p}}\Big(\|\nabla_j\cL_{I,j} u\|_{L_t^{\infty}(I_T,\cH)}+ \|\nabla_j\cL_{I,j} u\|_{L_t^{\theta_{p}}(I_T,L^{p,2}_j)}\Big)\notag\\
&\quad\times \min\big\{\|(1-\Delta_j)^{-1/2}\nabla_j v\|_{L_t^{\theta'_{p}}(I_T, L^{p',2}_{D})}, \|(1-\Delta_j)^{-1/2}\nabla_jv\|_{L^1_t(I_T,\cH)}\big\}\notag\\
&\lesssim_{\alpha,p,\widetilde{p}} T^{1/\theta'_{\widetilde{p}}-1/\theta_{p}}\Big(\|\cL_{I} u\|_{L_t^{\infty}(I_T,\cH)}+ \|\cL_{I} u\|_{L_t^{\theta_{p}}(I_T,L^{p,2}_j)}\Big)\min\{\|v\|_{L_t^{\theta'_{p}}(I_T, L^{p',2}_D)},\|v\|_{L^1_t(I_T,\cH)}\}, 
\end{align}
where in the last inequality we used Theorem \ref{th:2.7}. Consequently, \eqref{eq:LI-V},
\eqref{eq:LI-SV-scalar1}, \eqref{eq:LI-SV-scalar2} (for $j\in I$) and \eqref{eq:LI-SV} (for $j\not\in I$) imply that, under conditions
\eqref{ass1}–\eqref{ass2} in Assumption \ref{ass} and for any $0<T<1$,
\begin{align}\label{eq:LI-SV-total}
    \left\|\cL_I S\big(V(\cdot,x_j)u\big)\right\|_{X^1_{I,p,T}}
\lesssim_{\alpha,p,\widetilde{p}} Z\,T^{1/\theta'_{\widetilde{p}}-1/\theta_{p}} \,\|u\|_{X^1_{I,p,T}}.
\end{align}

\medskip

{\bf Step 2. Study of the potentials between electrons and electrons.} 
We now consider electron–electron interaction terms of the form $W(x_j,x_k)$. 
The analysis is divided into three cases: $\{j,k\}\cap I=\emptyset$, $|\{j,k\}\cap I|=1$, and $\{j,k\}\subset I$.

{\bf Case 1.} If $\{j,k\}\cap I=\emptyset$, then
\[
    \cL_I\frac{1}{|x_j-x_k|}u \;=\; \frac{1}{|x_j-x_k|}\cL_Iu.
\]
Applying \eqref{eq:SWu1}, we deduce that under conditions \eqref{ass1}--\eqref{ass2} in Assumption~\ref{ass} and for $0<T<1$,
\begin{align}\label{case1}
    \big\|S\cL_I(|x_j-x_k|^{-1}u)\big\|_{X_{p,T}}
    = \big\|S(|x_j-x_k|^{-1}\cL_I u)\big\|_{X_{p,T}}
    \;\lesssim_{\alpha,p,\widetilde{p}} T^{1/\theta'_{\widetilde{p}}-1/\theta_{p}} \|u\|_{X^1_{I,p,T}}.
\end{align}

{\bf Case 2.}  For the case $|\{j,k\}\cap I|=1$, without loss of generality we assume $j\in I$ and $k\not\in I$ (the case $j\not\in I$, $k\in I$ is analogous).  
Then
\begin{align*}
    \cL_I \frac{1}{|x_j-x_k|}u=(1-\Delta_j)^{1/2}\frac{1}{|x_j-x_k|}\cL_{I,j}u.
\end{align*}
The argument is essentially the same as in Step~1, with $a_\mu(t)$ replaced by $x_k$.  From \eqref{eq:LI-SVu2}, we obtain
\begin{align*}
   \MoveEqLeft \left|\int_0^T\Big\langle S[(1-\Delta_j)^{-1/2}\nabla_j]\cdot \left(\nabla_j\Big[\frac{1}{|x_j-x_k|} \cL_{I,j}u(\cdot,x)\Big]\right),v(t,x) \Big\rangle \;dt\right|\notag\\
   &\lesssim_{\alpha} \int_0^T\left\|\Big[\frac{1}{|x_j-x_k|^\alpha}\cL_{I,j} \nabla_j u(s,x)\Big]\right\|_{\cH}\left\|\frac{1}{|x_j-x_k|^{1-\alpha}} S^*\big((1-\Delta_j)^{-1/2}\nabla_jv(\cdot,x)\big)(s) \right\|_{\cH} \;ds.
\end{align*}
Proceeding as in \eqref{eq:SW-scalar}, we infer that under conditions 
\eqref{ass1}--\eqref{ass2} in Assumption~\ref{ass} and for $0<T<1$,
\begin{align}\label{case2}
    \left\|S\cL_I( |x_j-x_k|^{-1}u)\right\|_{X_{p,T}}\lesssim_{\alpha,p_,\widetilde{p}} T^{1/\theta'_{\widetilde{p}}-1/\theta_{p}} \|u\|_{X^1_{I,p,T}}
\end{align}

{\bf Case 3.} 
{\bf Case 3.} Finally, we consider the case $\{j,k\}\subset I$.  
In this case, we have
\begin{align*}
    \cL_{I}\frac{1}{|x_j-x_k|}u=(1-\Delta_j)^{1/2}(1-\Delta_k)^{1/2}\frac{1}{|x_j-x_k|}\cL_{I,j,k}u.
\end{align*}
According to \eqref{frac-laplace-dec}, we have
\begin{align}\label{eq:LI-W-dec}
  \MoveEqLeft  \cL_{I}\frac{1}{|x_j-x_k|}u=\frac{1}{(1-\Delta_j)^{1/2}}\frac{1}{(1-\Delta_k)^{1/2}}\frac{1}{|x_j-x_k|}\cL_{I,j,k}u\notag\\
    &\quad+ \frac{\nabla_j}{(1-\Delta_j)^{1/2}}\frac{1}{(1-\Delta_k)^{1/2}}\cdot \nabla_j\big[\frac{1}{|x_j-x_k|}\cL_{I,j,k}u\big]\notag\\
    &\quad+ \frac{1}{(1-\Delta_j)^{1/2}}\frac{\nabla_k}{(1-\Delta_k)^{1/2}}\cdot \nabla_k\big[\frac{1}{|x_j-x_k|}\cL_{I,j,k}u\big]\notag\\
    &\quad +\big[\frac{\nabla_j}{(1-\Delta_j)^{1/2}}\otimes \frac{\nabla_k}{(1-\Delta_k)^{1/2}}\big]\cdot \left(\nabla_j\otimes \nabla_k\big[\frac{1}{|x_j-x_k|}\cL_{I,j,k}u\big]\right).
\end{align}
Thus, the study of $S\cL_{I}(|x_j-x_k|^{-1}u)$ reduces to analyzing the four terms on the right-hand side of \eqref{eq:LI-W-dec}.

Concerning the first term, by \eqref{eq:SWu1} we obtain that, under conditions 
\eqref{ass1}--\eqref{ass2} in Assumption~\ref{ass}, for any $0<T<1$ and any 
$D\subset\{1,\cdots,N\}$ with $1\leq |D|\leq 2$,
\begin{align}\label{eq:4.25}
 \MoveEqLeft   \left\|S\frac{1}{(1-\Delta_j)^{1/2}}\frac{1}{(1-\Delta_k)^{1/2}}\Big(\frac{1}{|x_j-x_k|}\cL_{I,j,k}u\Big)\right\|_{X_{p,T}}\notag\lesssim_{p} \left\|S\Big(\frac{1}{|x_j-x_k|}u\Big)\right\|_{X_{p,T}}\\
 &\qquad\qquad\qquad\qquad\qquad\lesssim_{\alpha,p_,\widetilde{p}} T^{1/\theta'_{\widetilde{p}}-1/\theta_{p}}  \| \cL_{I,j,k}u\|_{X_{p,T}} \lesssim_{\alpha,p_,\widetilde{p}} T^{1/\theta'_{\widetilde{p}}-1/\theta_{p}}  \|\cL_{I}u\|_{X_{p,T}}.
\end{align}
Here, the first and last inequalities follow from \eqref{eq:2.6b}.

The second term on the right-hand side of \eqref{eq:LI-W-dec} can be studied as in 
\eqref{eq:4.20}, by replacing $a_{\mu}$ with $x_j$:
\begin{align*}
    \left\||x_j-x_k|^{1-\alpha}\nabla_j\Big[\frac{1}{|x_j-x_k|}\Big] 
    \cL_{I,j,k}u(\cdot,x)\right\|_{\cH}
    \;\lesssim\; \left\|\frac{1}{|x_j-x_k|^\alpha}\cL_{I,j,k}\nabla_j u\right\|_{\cH}.
\end{align*}
Proceeding as for \eqref{eq:SW-scalar} and using Theorem~\ref{th:2.7}, we obtain that, 
under conditions \eqref{ass1}--\eqref{ass2} in Assumption~\ref{ass}, for any $0<T<1$ 
and any $D\subset\{1,\cdots,N\}$ with $1\leq |D|\leq 2$, 
\begin{align*}
       \MoveEqLeft \left|\int_0^T\Big\langle S\frac{\nabla_j}{(1-\Delta_j)^{1/2}}\frac{1}{(1-\Delta_j)^{1/2}} \cdot \left(\nabla_j\Big[\frac{1}{|x_j-x_k|} \cL_{I,j,k}u(\cdot,x)\Big]\right),v(t,x) \Big\rangle \;dt\right|\notag\\
   &\lesssim_{\alpha} \int_0^T\left\|\Big[\frac{1}{|x_j-x_k|^\alpha}\cL_{I,j,k} \nabla_j u(s,x)\Big]\right\|_{\cH}\left\|\frac{1}{|x_j-x_k|^{1-\alpha}} S^*\left(\frac{\nabla_j}{(1-\Delta_j)^{1/2}}\frac{1}{(1-\Delta_j)^{1/2}} v(\cdot,x)\right)(s) \right\|_{\cH} \;ds\\
   &\lesssim_{\alpha,p,\widetilde{p}} T^{1/\theta'_{\widetilde{p}}-1/\theta_{p}}\Big(\|\cL_{I}u\|_{L_t^{\infty}(I_T,\cH)}+ \|\cL_{I}u\|_{L_t^{\theta_{p}}(I_T,L^{p,2}_{j,k})}\Big)\min\{\|v\|_{L_t^{\theta'_{p}}(I_T, L^{p',2}_D)},\|v\|_{L^1_t(I_T,\cH)}\}.
\end{align*}
Hence, under conditions \eqref{ass1}--\eqref{ass2} in Assumption~\ref{ass}, 
for any $0<T<1$ and any $D\subset\{1,\cdots,N\}$ with $1\leq |D|\leq 2$,
\begin{align}\label{eq:4.26}
 \MoveEqLeft   \left\|S\frac{\nabla_j}{(1-\Delta_j)^{1/2}}\frac{1}{(1-\Delta_k)^{1/2}}\Big(\frac{1}{|x_j-x_k|}\cL_{I,j,k}u\Big)\right\|_{X_{p,T}}\lesssim_{\alpha,p_,\widetilde{p}} T^{1/\theta'_{\widetilde{p}}-1/\theta_{p}}  \|\cL_{I}u\|_{X_{p,T}}.
\end{align}

Analogously, for the third term on the right-hand side of \eqref{eq:LI-W-dec}, we have that under condition \eqref{ass1}-\eqref{ass2} in Assumption \ref{ass} and for any $0<T<1$ and any $D\subset\{1,\cdots,N\}$ with $1\leq |D|\leq 2$,
\begin{align}\label{eq:4.27}
 \MoveEqLeft   \left\|S\frac{1}{(1-\Delta_j)^{1/2}}\frac{\nabla_k}{(1-\Delta_k)^{1/2}}\Big(\frac{1}{|x_j-x_k|}\cL_{I,j,k}u\Big)\right\|_{X_{p,T}}\lesssim_{\alpha,p_,\widetilde{p}} T^{1/\theta'_{\widetilde{p}}-1/\theta_{p}}  \|\cL_{I}u\|_{X_{p,T}}.
\end{align}

It remains to study the last term on the right-hand side of \eqref{eq:LI-W-dec}, 
where we will use Lemma~\ref{Hardy} together with \eqref{u-anti} 
(i.e., the antisymmetry of $u$ with respect to $\{j,k\}$). We have
\begin{align*}
    \nabla_j\otimes \nabla_k\Big[\frac{1}{|x_j-x_k|}\cL_{I,j,k}u\Big]&=\frac{1}{|x_j-x_k|} \nabla_j\otimes \nabla_k \cL_{I,j,k}u+\big[\nabla_j\frac{1}{|x_j-x_k|}\big]\otimes  \nabla_k \cL_{I,j,k}u\\
    &\quad+\big[\nabla_k\frac{1}{|x_j-x_k|}\big] \otimes \nabla_j \cL_{I,j,k}u+\big[\nabla_j\otimes \nabla_k \cL_{I,j,k}\frac{1}{|x_j-x_k|}\big] u.
\end{align*}
Thus, by Lemma \ref{Hardy1}, Lemma \ref{Hardy}, and the antisymmetry property \eqref{u-anti},
\begin{align*}
 \MoveEqLeft \left\||x_j-x_k|^{1-\alpha}\nabla_j\Big[\frac{1}{|x_j-x_k|}\Big] \cL_{I,j,k}u(\cdot,x)\right\|_{\cH}\lesssim\left\|\frac{1}{|x_j-x_k|^{\alpha}}\cL_{I,j,k}\nabla_j\otimes \nabla_k u\right\|_{\cH}+\left\|\frac{1}{|x_j-x_k|^{1+\alpha}}\cL_{I,j,k}\nabla_j u\right\|_{\cH}\\
     &\qquad\qquad\qquad\qquad+ \left\|\frac{1}{|x_j-x_k|^{1+\alpha}}\cL_{I,j,k}\nabla_k u\right\|_{\cH}+\left\|\frac{1}{|x_j-x_k|^{2+\alpha}}\cL_{I,j,k} u\right\|_{\cH}\lesssim \left\|\frac{1}{|x_j-x_k|^{\alpha}}\cL_{I,j,k}\nabla_j\otimes \nabla_k u\right\|_{\cH}.
\end{align*}
Proceeding as in \eqref{eq:SW-scalar} and using Theorem \ref{th:2.7}, we infer that, 
under conditions \eqref{ass1}--\eqref{ass2} in Assumption \ref{ass}, 
for any $0<T<1$ and any $D\subset\{1,\cdots,N\}$ with $1\leq |D|\leq 2$,
\begin{align*}
\MoveEqLeft \left|\int_0^T\Big\langle S\Big[\frac{\nabla_j}{(1-\Delta_j)^{1/2}}\otimes\frac{\nabla_k}{(1-\Delta_k)^{1/2}}\Big] \cdot \left(\nabla_j\otimes\nabla_k\Big[\frac{1}{|x_j-x_k|} \cL_{I,j,k}u(\cdot,x)\Big]\right),v(t,x) \Big\rangle \;dt\right|\notag\\
&\lesssim_{\alpha} \int_0^T\left\|\frac{1}{|x_j-x_k|^\alpha}\cL_{I,j,k} \nabla_j\otimes \nabla_k u(s,x)\right\|_{\cH} \left\|\frac{1}{|x_j-x_k|^{1-\alpha}} S^*\left(\frac{\nabla_j}{(1-\Delta_j)^{1/2}}\otimes \frac{\nabla_k}{(1-\Delta_k)^{1/2}} v(\cdot,x)\right)(s) \right\|_{\cH} \;ds\\
   &\lesssim_{\alpha,p,\widetilde{p}} T^{1/\theta'_{\widetilde{p}}-1/\theta_{p}}\Big(\|\cL_{I}u\|_{L_t^{\infty}(I_T,\cH)}+ \|\cL_{I}u\|_{L_t^{\theta_{p}}(I_T,L^{p,2}_{j,k})}\Big)\min\{\|v\|_{L_t^{\theta'_{p}}(I_T, L^{p',2}_D)},\|v\|_{L^1_t(I_T,\cH)}\}.
\end{align*}
As a result, under condition \eqref{ass1}-\eqref{ass2} in Assumption \ref{ass} and for any $0<T<1$ and any $D\subset\{1,\cdots,N\}$ with $1\leq |D|\leq 2$,
\begin{align}\label{eq:4.28}
 \MoveEqLeft   \left\|S\Big[\frac{\nabla_j}{(1-\Delta_j)^{1/2}}\otimes\frac{\nabla_k}{(1-\Delta_k)^{1/2}}\Big] \cdot \left(\nabla_j\otimes\nabla_k\Big[\frac{1}{|x_j-x_k|} \cL_{I,j,k}u(\cdot,x)\Big]\right)\right\|_{X_{p,T}}\lesssim_{\alpha,p_,\widetilde{p}} T^{1/\theta'_{\widetilde{p}}-1/\theta_{p}}  \|\cL_{I}u\|_{X_{p,T}}.
\end{align}
Now we conclude from \eqref{eq:4.25}-\eqref{eq:4.28} that under condition \eqref{ass1}-\eqref{ass2} in Assumption \ref{ass} and for any $0<T<1$,
\begin{align}\label{case3}
   \left\|S\Big(\frac{1}{|x_j-x_k|} u(\cdot,x)\Big)\right\|_{X^1_{I,p,T}}\lesssim_{\alpha,p_,\widetilde{p}} T^{1/\theta'_{\widetilde{p}}-1/\theta_{p}}  \|u\|_{X^1_{I,p,T}}.
\end{align}

{\bf Conclusion for all cases.} Finally we can conclude from \eqref{case1}, \eqref{case2} and \eqref{case3} that  under condition \eqref{ass1}-\eqref{ass2} in Assumption \ref{ass} and for any $0<T<1$,
\begin{align}\label{eq:LI-SW-tatal}
   \left\|S\Big(\frac{1}{|x_j-x_k|} u(\cdot,x)\Big)\right\|_{X^1_{I,p,T}}\lesssim_{\alpha,p_,\widetilde{p}} ZT^{1/\theta'_{\widetilde{p}}-1/\theta_{p}}  \|u\|_{X^1_{I,p,T}}.
\end{align}

{\bf Step 3. Conclusion.} From \eqref{eq:LI-SV-total} and \eqref{eq:LI-SW-tatal}, we infer that for any $0<T<1$, $p$ satisfying \eqref{eq:p1} and $\widetilde{p}$ satisfying \eqref{eq:tildep1}, there exists a constant $C_{T,2}:=C_{T,2}(\alpha,p,\widetilde{p})\geq 1$ such that
\begin{align}\label{eq:CT2}
    \left\|Qu\right\|_{X^1_{I,p,T}}\leq C_{T,2} (Z+N)NT^{1/\theta'_{\widetilde{p}}-1/\theta_{p}} \|u\|_{X^1_{I,p,T}}.
\end{align}
Now let $C_{T,2}(Z+N)NT^{1/\theta'_{\widetilde{p}}-1/\theta_{p}}\leq \frac{1}{2}$, we get $ \|Qu\|_{X^1_{I,p,T}}\leq \frac{1}{2}\|u\|_{X^1_{I,p,T}}.$ Thus under Assumption \ref{ass}, we have that $1+iQ$ is invertible on $X^1_{I,p,T}$. As a result,
\begin{align*}
    \|u\|_{X^1_{I,p,T}}=\|(1+iQ)^{-1}(U_0(\cdot)u_0)\|_{X^1_{I,p,T}}\leq 2\|U_0(\cdot)u_0\|_{X^1_{I,p,T}}\lesssim_{q} \|u_0\|_{H_{I,\rm mix}^1}.
\end{align*}
This gives \eqref{eq:u-mix} and shows the uniqueness of the solution $u$ in $X_{I,p,T}^1$. Hence the theorem.
\end{proof}

\section{Justification of the sparse grid approximation \texorpdfstring{\eqref{hyper-approx}}{}}\label{sec:5} 
Now we are going to prove Theorem \ref{th:justif}, in particular \eqref{error-bound}. Before going further, we need the following result which can be regarded as an evolution version of \eqref{eq:approx1}.
\begin{lemma}\label{lem:5.3}
Let $u_0\in H_{I_1,\rm mix}^1\bigcap H_{I_2,\rm mix}^1$, and $\mathcal{P}_R$ be a projector satisfying Assumption~\ref{ass:1.8}. Under Assumption \ref{ass} we have
    \begin{align}\label{eq:u-Pu}
        \|(1-\mathcal{P}_{R})u\|_{X_{p,T}}\lesssim_p \frac{1}{R}\sum_{\ell=1,2}\|u_0\|_{H_{I_\ell,\rm mix}^1}.
    \end{align}
\end{lemma}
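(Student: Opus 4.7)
The plan is to derive the bound via the Duhamel representation $u = U_0(t) u_0 - i Q u$. Assumption \ref{ass:1.8} gives $[\mathcal{P}_R, \Delta_j] = 0$, and hence both $[\mathcal{P}_R, U_0] = 0$ and $[\mathcal{P}_R, \cL_{I_\ell}] = 0$. Applying $(1-\mathcal{P}_R)$ to the Duhamel identity and pulling the projector past $U_0$,
\[
(1-\mathcal{P}_R) u = U_0\,(1-\mathcal{P}_R) u_0 \;-\; i\,(1-\mathcal{P}_R) Q u .
\]
The first term is handled directly: by the Strichartz estimate \eqref{eq:2.2a} and the approximation clause of Assumption \ref{ass:1.8},
\[
\|U_0(1-\mathcal{P}_R) u_0\|_{X_{p,T}} \;\lesssim_p\; \|(1-\mathcal{P}_R) u_0\|_{\cH} \;\leq\; \frac{1}{R}\,\Big\|\sum_{\ell=1,2}\cL_{I_\ell} u_0\Big\|_{\cH} \;\leq\; \frac{1}{R}\sum_{\ell=1,2}\|u_0\|_{H^1_{I_\ell,\mathrm{mix}}} .
\]

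For the term $(1-\mathcal{P}_R) Q u$, the key manipulation is to insert the ``partition of identity'' $\mathbbm{1} = \bigl(\sum_{\ell'}\cL_{I_{\ell'}}\bigr)^{-1}\sum_{\ell}\cL_{I_\ell}$ and exploit the commutation identities $[\mathcal{P}_R,\cL_{I_\ell}] = [S,\cL_{I_\ell}] = 0$ to rewrite, for any $v$,
\[
(1-\mathcal{P}_R) S v \;=\; \sum_{\ell=1,2} \Bigl(\sum_{\ell'=1,2}\cL_{I_{\ell'}}\Bigr)^{-1} (1-\mathcal{P}_R) S(\cL_{I_\ell} v).
\]
Applying \eqref{eq:2.3b} of Corollary \ref{cor:2.5} to each summand and the analogous $L_t^\infty(\cH)$ estimate deduced from \eqref{eq:2.2b} (using Assumption \ref{ass:1.8} pointwise in time together with $[\cL_{I_\ell},S]=0$), one obtains the crucial smallness bound
\[
\|(1-\mathcal{P}_R) S v\|_{X_{p,T}} \;\lesssim_{p,\widetilde p}\; \frac{1}{R}\sum_{\ell=1,2}\|\cL_{I_\ell} v\|_{L_t^{\theta'_{\widetilde p}}(I_T,L^{\widetilde p',2}_{D'})} .
\]
Taking $v = V_N u := \sum_j V(\cdot,x_j) u + \sum_{j<k} W(x_j,x_k) u$ then reduces the whole problem to a dual-Strichartz-type estimate on $\|\cL_{I_\ell} V_N u\|_{L_t^{\theta'_{\widetilde p}}(L^{\widetilde p',2}_{D'})}$.

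The main obstacle is precisely this last dual-norm estimate on $\cL_{I_\ell}V_N u$. I would tackle it by mirroring Steps~1--2 of the proof of Theorem~\ref{th:mixregularity}: use the splitting \eqref{frac-laplace-dec} to decompose $(1-\Delta_j)^{1/2}$ acting on each potential term, distinguish the cases $j\in I_\ell$ vs.\ $j\notin I_\ell$ for the electron--nucleus potentials, and the three cases based on $|\{j,k\}\cap I_\ell|$ for the electron--electron potentials (invoking the antisymmetry of $u$ established in \eqref{u-anti} when $\{j,k\}\subset I_\ell$). Each resulting piece is an inner product that is controlled by a product of weighted $L^2$-norms, which in turn are bounded by the Hardy inequalities of Lemmas~\ref{Hardy1}--\ref{Hardy} together with the Sobolev estimates of Theorem~\ref{th:2.7}, giving an overall bound of the form $T^{1/\theta'_{\widetilde p}-1/\theta_p}\|\cL_{I_\ell} u\|_{X_{p,T}}$. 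Finally, invoking Theorem~\ref{th:mixregularity} to bound $\|\cL_{I_\ell} u\|_{X_{p,T}}$ by $\|u_0\|_{H^1_{I_\ell,\mathrm{mix}}}$ and assembling all the pieces yields \eqref{eq:u-Pu}. The technical heart is therefore the same Hardy/Sobolev machinery already used for mixed regularity; the only new ingredient is the commutation-based rewriting that extracts the factor $1/R$ from Corollary~\ref{cor:2.5}.
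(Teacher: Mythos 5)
Your proposal follows essentially the same route as the paper's proof: Duhamel plus the Strichartz estimate \eqref{eq:2.2a} and the approximation clause of Assumption~\ref{ass:1.8} for the initial-datum term, insertion of $\bigl(\sum_{\ell'}\cL_{I_{\ell'}}\bigr)^{-1}\sum_{\ell}\cL_{I_{\ell}}$ together with the commutativity of $\mathcal{P}_R$ with $\cL_{I_\ell}$ and $U_0$ to extract the factor $1/R$ from \eqref{eq:2.3b}, the Hardy/Sobolev machinery of Steps~1--2 of Theorem~\ref{th:mixregularity} for the potential terms, and finally \eqref{eq:u-mix} to close. One caveat: your displayed intermediate bound $\|(1-\mathcal{P}_R)Sv\|_{X_{p,T}}\lesssim R^{-1}\sum_{\ell}\|\cL_{I_\ell}v\|_{L_t^{\theta'_{\widetilde p}}(L^{\widetilde p',2}_{D'})}$ with $v=V_Nu$ is not the estimate that can actually be proved, since $\widetilde p'<2$ and the bounded (far-field) part of the Coulomb potential times $u$ is only controlled in $L^2$-based spaces, so one must work in the scalar-product form of \eqref{eq:5.8}, splitting $|x|^{-1}=|x|^{-\alpha}\cdot|x|^{-(1-\alpha)}$ between $u$ and $S^*\bigl[(\sum_\ell\cL_{I_\ell})^{-1}(1-\mathcal{P}_R)v\bigr]$ and keeping a $\min\{\cdot,\cdot\}$ of dual norms on the test function --- which is exactly what your final paragraph describes, so the plan is sound once that display is replaced by its duality formulation.
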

\begin{proof}
By Duhamel formula\eqref{duhamel}, we have that 
\[
(1-\mathcal{P}_{R})u(t)=(1-\mathcal{P}_{R})U_0(t)u_0-i(1-\mathcal{P}_{R})Qu(t).
\]
Thus, by \eqref{eq:2.2a},
\begin{align*}
    \|(1-\mathcal{P}_{R})u\|_{X_{p,T}}&\leq \|U_0(t) (1-\mathcal{P}_{R})u_0\|_{X_{p,T}}+\|(1-\mathcal{P}_{R})Qu(t)\|_{X_{p,T}}\lesssim_{p} \|(1-\mathcal{P}_{R})u_0\|_{\cH}+\|(1-\mathcal{P}_{R})Qu(t)\|_{X_{p,T}}.
\end{align*}
According to \eqref{eq:3.5}, we have
\begin{align}\label{eq:ope-R}
    \|(1-\mathcal{P}_{R})u_0\|_{\cH}\leq \frac{1}{R}\big\|\sum_{\ell=1,2} \cL_{I_\ell}u_0\big\|_{\cH}\leq \frac{1}{R}\sum_{\ell=1,2}\|u\|_{H_{I_\ell,\rm mix}^1}.
\end{align}
Thus, using \eqref{eq:2.2a},
\begin{align}\label{eq:5.6}
    \|(1-\mathcal{P}_{R})u\|_{X_{p,T}}\lesssim_{p} R^{-1}\sum_{\ell=1,2}\|u_0\|_{H_{I_\ell,\rm mix}^1}+\|(1-\mathcal{P}_{R})Qu(t)\|_{X_{p,T}}.
\end{align}
It remains to show that $\|(1-\mathcal{P}_{R})Qu(t)\|_{X_{p,T}}\lesssim_p \frac{1}{R}\sum_{\ell=1,2}\|u_0\|_{H_{I_\ell,\rm mix}^1}$. To do so, here we are going to prove
\begin{align}\label{eq:5.7}
    \|(1-\mathcal{P}_{R})Qu(t)\|_{X_{p,T}}\lesssim_p \frac{1}{R}\sum_{\ell=1,2}\|u(t)\|_{X^1_{I_\ell,p,T}}.
\end{align}
Then by \eqref{eq:u-mix}, under Assumption \ref{ass} we have
\begin{align*}
    \|(1-\mathcal{P}_{R})Qu(t)\|_{X_{p,T}}\lesssim_p \frac{1}{R}\sum_{\ell=1,2}\|u_0\|_{H_{I_\ell,\rm mix}^1}.
\end{align*}
This and \eqref{eq:5.6} give \eqref{eq:u-Pu}. Hence this lemma.

To end the proof, we prove \eqref{eq:5.7} by using Corollary \ref{cor:2.5}. 
As in the proof of mixed regularity, we consider this problem in the scalar product: 
for any $D\subset\{1,\cdots,N\}$ with $1\leq |D|\leq 2$, and for any 
$u(t,x)\in X^1_{I,p,T}$ and $v(t,x)\in L_t^{\theta'_{p}}(I_T,L^{p',2}_{D})$ 
or $v(t,x)\in L_t^{\infty}(I_T,\cH)$, we aim to show 
\begin{align*}
   \left|\int_{0}^T\langle (1-\mathcal{P}_{R})Qu(t),v(t) \rangle \;dt\right|\lesssim T^\theta \sum_{\ell=1,2}\|u\|_{X^1_{I_\ell,p,T}}\min\{\|v\|_{L_t^{\theta'_{p}}(I_T,L^{p',2}_{D})},\|v\|_{L^\infty_t(I_T,\cH)}\}
\end{align*}
for some $\theta>0$ and $p>2$. Indeed, we have
\begin{align}\label{eq:5.8}
 \MoveEqLeft   \left|\int_{0}^T\langle (1-\mathcal{P}_{R})Qu(t),v(t) \rangle \;dt\right|\leq \sum_{\ell=1,2}\left|\int_{0}^T\left\langle \cL_{I_\ell}V(s,x_j) u(s),S^* \Big[\Big(\sum_{\ell=1,2}\cL_{I_\ell}\Big)^{-1}(1-\mathcal{P}_{R})v\Big](s) \right\rangle \;ds\right|\notag\\
    &\qquad\qquad\qquad\qquad\qquad\qquad+\sum_{\substack{\ell=1,2\\ 1\leq j<k\leq N}}\left|\int_{0}^T\left\langle \cL_{I_\ell}W(x_j,x_k) u(s),S^* \Big[\Big(\sum_{\ell=1,2}\cL_{I_\ell}\Big)^{-1}(1-\mathcal{P}_{R})v\Big](s) \right\rangle \;ds\right|.
\end{align}
The proof of the terms on the right-hand side of \eqref{eq:5.8} 
is essentially the same as in Step~1 and Step~2 of the proof of Theorem~\ref{th:mixregularity}: 
we only need to replace the Strichartz estimate \eqref{eq:2.2c} used there 
by the Strichartz estimate \eqref{eq:2.3b}. 
Then, under Assumption~\ref{ass}, we obtain \eqref{eq:5.7}.
\end{proof}

\begin{proof}[Proof of Theorem \ref{th:justif}] 
We first consider the existence of solutions to \eqref{hyper-approx}. 
The proof is essentially the same as for Theorem~\ref{th:existence}, 
with one modification: the Strichartz estimate \eqref{eq:2.2c} used there 
should be replaced by \eqref{eq:2.3a}, as in Lemma~\ref{lem:5.3}. 
It then follows that for every $u_0\in \cH$, the problem \eqref{hyper-approx} has 
a unique global-in-time solution $u_R\in X_{p,\infty}$. 
Moreover, by \eqref{duhamel} and the identity $\mathcal{P}_R^2=\mathcal{P}_R$, 
we have $\mathcal{P}_R u_R=u_R$.  

We now turn to the error estimate \eqref{error-bound}. 
Observe that
\begin{align}\label{eq:5.9}
    \|u-u_R\|_{X_{p,T}}
    \leq \|u-\mathcal{P}_{R}u\|_{X_{p,T}}
         +\|\mathcal{P}_{R}u-u_R\|_{X_{p,T}}.
\end{align}
It remains to control the difference $\mathcal{P}_{R}u-u_R$. 
By the Duhamel formula \eqref{duhamel}, we have
\[
    \mathcal{P}_{R}u-u_R = -\,i\,\mathcal{P}_{R}Q(u-u_R).
\]
Replacing the Strichartz estimate \eqref{eq:2.2c} by \eqref{eq:2.3a} 
and proceeding as for Step~1 and Step~2 in the proof of Theorem~\ref{th:existence}, 
we deduce that, under conditions \eqref{ass1}–\eqref{ass2} in Assumption~\ref{ass} 
and for $0<T<1$, there exists a constant 
$C_{T,3}:=C_{T,3}(\alpha,p,\widetilde{p})\geq 1$ such that
\begin{align}\label{eq:CT3}
    \|\mathcal{P}_{R}Q(u-u_R)\|_{X_{p,T}}
    \leq C_{T,3}(Z+N)N\,T^{1/\theta'_{\widetilde{p}}-1/\theta_{p}}
        \,\|u-u_R\|_{X_{p,T}}.
\end{align}
Hence, under Assumption~\ref{ass},
\[
    \|\mathcal{P}_{R}u-u_R\|_{X_{p,T}}
    = \|\mathcal{P}_{R}Q(u-u_R)\|_{X_{p,T}}
    \leq \tfrac{1}{2}\|u-u_R\|_{X_{p,T}}.
\]
Substituting this bound into \eqref{eq:5.9}, we obtain
\[
    \|u-u_R\|_{X_{p,T}}
    \leq \|u-\mathcal{P}_{R}u\|_{X_{p,T}}
         + \tfrac{1}{2}\|u-u_R\|_{X_{p,T}}.
\]
Therefore, by Lemma~\ref{lem:5.3}, under Assumption \ref{ass}, 
\[
     \|u-u_R\|_{X_{p,T}}
     \leq 2\|u-\mathcal{P}_{R}u\|_{X_{p,T}}
     \;\lesssim_p\; \frac{1}{R}\sum_{\ell=1,2}\|u_0\|_{H^1_{I_\ell,\mathrm{mix}}}.
\]
This proves \eqref{error-bound} and completes the proof. 
\end{proof}

\noindent\textbf{Acknowledgments.} 
We thank Huajie Chen and Harry Yserentant for valuable discussions and insightful feedback. DZ's work was partially supported by NSFC 124B2020.

\medskip
\bibliographystyle{plain}
\bibliography{reference}

\begin{thebibliography}{10}

\bibitem{baiardi2021electron}
A.~Baiardi.
\newblock Electron dynamics with the time-dependent density matrix renormalization group.
\newblock {\em J. Chem. Theory Comput.}, 17(6):3320--3334, 2021.

\bibitem{Catto}
C.~Bardos, I.~Catto, N.~Mauser, and S.~Trabelsi.
\newblock Setting and analysis of the multi-configuration time-dependent {H}artree-{F}ock equations.
\newblock {\em Arch. Ration. Mech. Anal.}, 198(1):273--330, 2010.

\bibitem{Griebel2}
H.-J. Bungartz and M.~Griebel.
\newblock Sparse grids.
\newblock {\em Acta Numer.}, 13:147--269, 2004.

\bibitem{Cances}
E.~Canc\`es and C.~Le~Bris.
\newblock On the time-dependent {H}artree-{F}ock equations coupled with a classical nuclear dynamics.
\newblock {\em Math. Models Methods Appl. Sci.}, 9(7):963--990, 1999.

\bibitem{Chadam}
J.~M. Chadam.
\newblock The time-dependent {H}artree-{F}ock equations with {C}oulomb two-body interaction.
\newblock {\em Comm. Math. Phys.}, 46(2):99--104, 1976.

\bibitem{ChristKiselev}
M.~Christ and A.~Kiselev.
\newblock Maximal functions associated to filtrations.
\newblock {\em J. Funct. Anal.}, 179(2):409--425, 2001.

\bibitem{moleculardynamics-Hans}
H.~Feldmeier and J.~Schnack.
\newblock Molecular dynamics for fermions.
\newblock {\em Rev. Mod. Phys.}, 72(3):655--688, 2000.

\bibitem{Griebel}
M.~Griebel and J.~Hamaekers.
\newblock A wavelet based sparse grid method for the electronic {S}chr\"{o}dinger equation.
\newblock In {\em International {C}ongress of {M}athematicians. {V}ol. {III}}, pages 1473--1506. Eur. Math. Soc., Z\"{u}rich, 2006.

\bibitem{Keel}
M.~Keel and T.~Tao.
\newblock Endpoint {S}trichartz estimates.
\newblock {\em Amer. J. Math.}, 120(5):955--980, 1998.

\bibitem{Yserentant3}
H.-C. Kreusler and H.~Yserentant.
\newblock The mixed regularity of electronic wave functions in fractional order and weighted {S}obolev spaces.
\newblock {\em Numer. Math.}, 121(4):781--802, 2012.

\bibitem{quantumdynamics-LasserLubich}
Caroline Lasser and Christian Lubich.
\newblock Computing quantum dynamics in the semiclassical regime.
\newblock {\em Acta Numer.}, 29:229--401, 2020.

\bibitem{li2020real}
X.~Li, N.~Govind, C.~Isborn, A.~E.~III DePrince, and K.~Lopata.
\newblock Real-time time-dependent electronic structure theory.
\newblock {\em Chem. Rev.}, 120(18):9951--9993, 2020.

\bibitem{singular}
S.~Lu, Y.~Ding, and D.~Yan.
\newblock {\em Singular integrals and related topics}.
\newblock World Scientific Publishing Co. Pte. Ltd., Hackensack, NJ, 2007.

\bibitem{quantumtoclassical-lubich}
Christian Lubich.
\newblock {\em From quantum to classical molecular dynamics: reduced models and numerical analysis}.
\newblock Zurich Lectures in Advanced Mathematics. European Mathematical Society (EMS), Z\"urich, 2008.

\bibitem{Abinitio}
Dominik Marx and J\"urg Hutter.
\newblock {\em Ab Initio Molecular Dynamics Basic Theory and Advanced Methods}.
\newblock Cambridge University Press, 2009.

\bibitem{meng}
L.~Meng.
\newblock On the mixed regularity of {$N$}-body {C}oulombic wavefunctions.
\newblock {\em ESAIM Math. Model. Numer. Anal.}, 57(4):2257--2282, 2023.

\bibitem{schlag}
C.~Muscalu and W.~Schlag.
\newblock {\em Classical and multilinear harmonic analysis. {V}ol. {I}}, volume 137 of {\em Cambridge Studies in Advanced Mathematics}.
\newblock Cambridge University Press, Cambridge, 2013.

\bibitem{nys2024ab}
J.~Nys, G.~Pescia, A.~Sinibaldi, and G.~Carleo.
\newblock Ab-initio variational wave functions for the time-dependent many-electron {S}chr\"odinger equation.
\newblock {\em Nat. Commun.}, 15(1):9404, 2024.

\bibitem{tao}
T.~Tao.
\newblock {\em Nonlinear dispersive equations: Local and global analysis}, volume 106.
\newblock American Mathematical Society, 2006.

\bibitem{tuckerman_2002}
M.~E. Tuckerman.
\newblock Ab initio molecular dynamics: basic concepts, current trends and novel applications.
\newblock {\em J. Phys. Condens. Matter}, 14(50):R1297--R1355, 2002.

\bibitem{wozniak2023exploring}
A.~P. Wo{\'z}niak, M.~Lewenstein, and R.~Moszy{\'n}ski.
\newblock Exploring the attosecond laser-driven electron dynamics in the hydrogen molecule with different real-time time-dependent configuration interaction approaches.
\newblock In M.~Musia{\l} and I.~Grabowski, editors, {\em Polish Quantum Chemistry from Ko{\l}os to Now}, volume~87 of {\em Adv. Quantum Chem.}, pages 167--190. 2023.

\bibitem{Yajima1}
K.~Yajima.
\newblock Existence of solutions for {S}chr\"{o}dinger evolution equations.
\newblock {\em Comm. Math. Phys.}, 110(3):415--426, 1987.

\bibitem{Yajima2}
K.~Yajima.
\newblock Existence and regularity of propagators for multi-particle {S}chr\"{o}dinger equations in external fields.
\newblock {\em Comm. Math. Phys.}, 347(1):103--126, 2016.

\bibitem{Yserentant1}
H.~Yserentant.
\newblock On the regularity of the electronic {S}chr\"{o}dinger equation in {H}ilbert spaces of mixed derivatives.
\newblock {\em Numer. Math.}, 98(4):731--759, 2004.

\bibitem{Yserentant-sparegrid}
H.~Yserentant.
\newblock Sparse grid spaces for the numerical solution of the electronic {S}chr\"odinger equation.
\newblock {\em Numer. Math.}, 101(2):381--389, 2005.

\bibitem{Yserentant2}
H.~Yserentant.
\newblock The hyperbolic cross space approximation of electronic wavefunctions.
\newblock {\em Numer. Math.}, 105(4):659--690, 2007.

\bibitem{Yserentant4}
H.~Yserentant.
\newblock {\em Regularity and approximability of electronic wave functions}, volume 2000 of {\em Lecture Notes in Mathematics}.
\newblock Springer-Verlag, Berlin, 2010.

\end{thebibliography}
\end{document}